\newcommand\numberthis{\addtocounter{equation}{1}\tag{\theequation}}
\newtheorem{thm}{Theorem}
\newtheorem{prop}{Proposition}
\newtheorem{lemma}{Lemma}
\newtheorem{quest}{Question}
\newtheorem{cor}{Corollary}
\newcommand{\spin}{\mathfrak{s}}
\theoremstyle{definition}
\newtheorem{defn}{Definition}
\theoremstyle{remark}
\newtheorem{remark}{Remark}
\newtheorem{example}{Example}
    \def\HSt{%
       \setbox0=\hbox{$\widehat{\mathit{HS}}$}
       \setbox1=\hbox{$\mathit{HS}$}
       \dimen0=1.1\ht0
       \advance\dimen0 by 1.17\ht1
       \smash{\mskip2mu\raise\dimen0\rlap{%
          \begin{turn}{180}
              {$\widehat{\phantom{\mathit{HS}}}$}
           \end{turn}} \mskip-2mu    
                \mathit{HS}
    }{\vphantom{\widehat{\mathit{HS}}}}{}}
    \def\HMt{%
       \setbox0=\hbox{$\widehat{\mathit{HM}}$}
       \setbox1=\hbox{$\mathit{HM}$}
       \dimen0=1.1\ht0
       \advance\dimen0 by 1.17\ht1
       \smash{\mskip2mu\raise\dimen0\rlap{%
          \begin{turn}{180}
              {$\widehat{\phantom{\mathit{HM}}}$}
           \end{turn}} \mskip-2mu    
                \mathit{HM}
    }{\vphantom{\widehat{\mathit{HM}}}}{}}
    \newcommand{\HMb}{\overline{\mathit{HM}}}
\newcommand{\HMf}{\widehat{\mathit{HM}}}
    \newcommand{\HSb}{\overline{\mathit{HS}}}
\newcommand{\HSf}{\widehat{\mathit{HS}}}
\newcommand{\Pin}{\mathrm{Pin}(2)}
\newcommand{\Th}{\Theta^3_H}
\newcommand{\Rint}{\widetilde{\mathcal{R}}}
\newcommand{\Rin}{\mathcal{R}}
\newcommand{\ztwo}{\mathbb{F}}
\newcommand{\V}{\mathcal{V}}
\newcommand{\U}{\mathcal{U}}
\newcommand{\Tor}{\mathrm{Tor}}
\newcommand{\Ainf}{\mathcal{A}_{\infty}}
\newcommand{\x}{\mathbf{x}}
\newcommand{\y}{\mathbf{y}}
\newcommand{\z}{\mathbf{z}}
\newcommand{\w}{\mathbf{w}}
\newcommand{\A}{\mathbf{a}}
\newcommand{\B}{\mathbf{b}}
\newcommand{\m}{\mathfrak{m}}
\begin{document}

\title{Non-formality in $\mathrm{PIN}(2)$-monopole Floer homology} 

\author{Francesco Lin}
\address{Department of Mathematics, Princeton University} 
\email{fl4@math.princeton.edu}

\begin{abstract}
In previous work, we introduced a natural $\Ainf$-structure on the $\Pin$-monopole Floer chain complex of a closed, oriented three-manifold $Y$, and showed that it is non-formal in the simplest case in which $Y$ is the three-sphere $S^3$. In this paper, we provide explicit descriptions of several Massey products induced on homology, and discuss how they can be used to compute the $\Pin$-monopole Floer homology of connected sums in many concrete examples.
\end{abstract}
\maketitle

Starting with Manolescu's disproof of the longstanding Triangulation conjecture \cite{Man2}, the study of $\Pin$-symmetry in Seiberg-Witten theory, where
\begin{equation*}
\Pin=S^1\cup j\cdot S^1\subset \mathbb{H},
\end{equation*}
has spurred a lot of activity, especially in light of its applications to the study of the homology cobordism group $\Th$. The analogous theory of involutive Heegaard Floer homology \cite{HenM} (which heuristically corresponds to a $\mathbb{Z}_4$-equivariant theory, where $\mathbb{Z}_4=\langle j\rangle\subset\Pin$) has also been very successful when addressing such problems. Despite all of this, still very little is known about $\Th$, and among the several natural questions one may ask, the following is particularly interesting.
\begin{quest}\label{quest}
Is there a torsion element in $\Th$ with Rokhlin invariant $1$?
\end{quest}
The negative answer for $2$-torsion elements was provided by Manolescu in \cite{Man2}, and is equivalent to the Triangulation conjecture being false by classic results of Galewski-Stern and Matumoto (see \cite{Man3} for a nice survey). In a related fashion, the interest in Question \ref{quest} stems from the fact that a negative answer would imply the following criterion for triangulability: a closed orientable topological manifold $M$ is triangulable if and only if its Kirby-Siebenmann invariant $\Delta(M)\in H^4(M;\mathbb{Z}/2\mathbb{Z})$ admits a lift to $H^4(M;\mathbb{Z})$. A partial negative answer to the question, when restricting the attention to connected sums of almost rational plumbed three-manifolds, was provided using involutive Heegaard Floer homology in \cite{DaiS}. On the other hand, as the problem involves the Rokhlin invariant, one could expect the full $\Pin$-symmetry, rather than $\mathbb{Z}_4$-symmetry, to play a central role in an approach to its answer.
\\
\par
With Question \ref{quest} as a motivation in mind, we study in this paper the more general problem of understanding the $\Pin$-monopole Floer homology of connected sums. The treatment of such a problem in the analogous setups of $\Pin$-equivariant Seiberg-Witten Floer homology and involutive Heegaard Floer homology can be found in \cite{Sto2}, \cite{HMZ} and \cite{DaiS}. $\Pin$-monopole Floer homology was introduced in \cite{Lin} as a counterpart of Manolescu's invariants in the Morse-theoretic setting of Kronheimer-Mrowka's monopole Floer homology \cite{KM}. In this paper, we are mostly interested in the (completed) invariant $\HSf_{\bullet}(Y,\spin)$ (pronounced \textit{HS-to}) associated to a three-manifold equipped with a self-conjugate spin$^c$ structure $\spin=\bar{\spin}$. This is a graded module over the ring
\begin{equation*}
\Rin=\HSf_{\bullet}(S^3)=\ztwo[[V]][Q]/Q^3
\end{equation*}
where $V$ and $Q$ have degrees respectively $-4$ and $-1$. It was shown in \cite{Lin4} that this package of invariants carries an extremely rich algebraic structure: namely, if we denote by $\hat{C}^{\jmath}_{\bullet}(Y,\spin)$ the chain complex underlying $\HSf_{\bullet}(Y,\spin)$, then $\hat{C}^{\jmath}_{\bullet}(S^3)$ has a natural structure of $\Ainf$-algebra and $\hat{C}^{\jmath}_{\bullet}(Y,\spin)$ is naturally an $\Ainf$-module over it. Furthermore, it was shown in \cite{Lin4} that the $\Ainf$-algebra $\hat{C}^{\jmath}_{\bullet}(S^3)$ is \textit{not} formal (i.e. not quasi-isomorphic to its homology). While the concept of non-formality has a very long history (see for example the celebrated results in \cite{DGMS} and \cite{GMor}), it has recently gained importance in understanding Floer theoretic invariants, especially regarding those arising in symplectic geometry (see for example \cite{AbS} and \cite{LP}).
\\
\par
The goal of the present paper is to explore these non-formality phenomena, and in particular their manifestation at the homology level as Massey products. Our interest in the study of these properties (especially towards Question \ref{quest}) is that Massey products naturally appear when trying to explicitly understand the $\Pin$-monopole Floer homology of connected sums. Indeed, the main result of \cite{Lin4} described the Floer chain complex of a connected sum in terms of the $\Ainf$-tensor product of the Floer complexes of the summands; this naturally leads to a spectral sequence, called the \textit{Eilenberg-Moore spectral sequence}, whose $E^2$-page is
\begin{equation*}
\mathrm{Tor}^{\Rin}_{*,*}(\HSf_{\bullet}(Y_0,\spin_0),\HSf_{\bullet}(Y_1,\spin_1))
\end{equation*}
and converges (up to grading shift) to $\HSf_{\bullet}(Y_0\hash Y_1,\spin_0,\spin_1)$. Non-formality comes into play when studying the successive pages of this spectral sequence: the higher differentials (and the extension problems relating $E^{\infty}$ to the actual group) are naturally described in terms of certain Massey products of the two summands.
\\
\par
While the main results of \cite{Lin4} provides a general, yet not concretely applicable, connected sum formula, the main goal of this paper is to show that in many cases the computations involving the $\Ainf$-structure and the Eilenberg-Moore spectral sequence can be explicitly performed. Towards this end, our exposition will blend general results with concrete examples, and we will discuss how several results proved in the literature with different methods fit in our framework.
\\
\par
Let us discuss the content of the various sections. In Section \ref{review}, we begin by providing a review of the essential aspects of $\Pin$-monopole Floer homology needed in the rest of the paper. Given this background, we show in Section \ref{DescrMassey} that several natural Massey (bi)products (including for example $\langle \x,Q,Q^2\rangle$, when $Q\cdot\x=0$ and $\langle V,\x,Q\rangle$, when $V\cdot\x=Q\cdot\x=0$) can be described in terms of the Gysin exact triangle
\begin{center}
\begin{tikzpicture}
\matrix (m) [matrix of math nodes,row sep=1em,column sep=0.5em,minimum width=2em]
  {
 \HSf_{\bullet}(Y,\spin)  && \HSf_{\bullet}(Y,\spin)\\
  &\HMf_{\bullet}(Y,\spin)\\};
  \path[-stealth]
  (m-1-1) edge node [above]{$\cdot Q$} (m-1-3)
   (m-2-2) edge node [left]{$\pi_*$}(m-1-1)
  (m-1-3) edge node [right]{$\iota$} (m-2-2)  
  ;
\end{tikzpicture}
\end{center}
relating $\HSf_{\bullet}(Y,\spin)$ with the usual monopole Floer homology $\HMf_{\bullet}(Y,\spin)$. As a consequence of this description we obtain an interpretation of the $U$-action on $\HMf_{\bullet}(Y,\spin)$ in terms of the $\Ainf$-structure on $\HSf_{\bullet}(Y,\spin)$ (see for example Proposition \ref{froy}). The Gysin exact triangle can be explicitly understood in several cases including Seifert spaces and spaces obtained by surgery on $L$-space knots, as discussed in Section \ref{examples}. In that section, we also introduce a rather large class of homology spheres called manifolds of simple type $M_n$, and discuss concrete examples (see Proposition \ref{manysimple}). Given this, we turn our attention onto the study of the Eilenberg-Moore spectral sequence. In Section \ref{projres}, we cover the relevant background in homological algebra over our  ring $\Rin$ needed to describe concretely the $E^2$-page of the spectral sequence. This leads up to Section \ref{connsimple}, where we study in detail the higher differentials and extension problems for connected sums with manifold of simple type $M_n$. We will see how the Massey products described in Section \ref{DescrMassey} naturally arise when trying to understand connected sums with this kind of spaces. Finally, we discuss more examples in Section \ref{moreex}, and use them to show how several results from \cite{Sto2} and \cite{DaiS} fit in our framework.
\\
\par
\textit{Acknowledgements. }I would like to thank Matt Stoffregen and Umut Varolgunes for many illuminating conversations. The 2016-2017 special year at the IAS \textit{Homological Mirror Symmetry} was very influential for several aspects of the paper. This work was partially funded by NSF grant DMS-1807242.
\vspace{0.3cm}
\section{A quick review of $\Pin$-monopole Floer homology}\label{review}
In this section, we briefly review the fundamental aspects of $\Pin$-monopole Floer homology which will be needed in the paper, with a particular focus on the results of \cite{Lin4}. We refer the reader to \cite{Lin3} for a more detailed introduction to the subject, and to \cite{Lin} for the details of the construction.
\\
\par
\textit{Formal properties.} To a closed, oriented three-manifold $Y$ equipped with a self-conjugate spin$^c$ structure $\spin$ we associated in \cite{Lin} chain complexes
\begin{equation}\label{flochain}
\check{C}_{\bullet}(Y,\spin),\quad \hat{C}_{\bullet}(Y,\spin),\quad \bar{C}_{\bullet}(Y,\spin)
\end{equation}
equipped with a chain involution $\jmath$. The homology of the chain complexes recovers the monopole Floer homology groups
\begin{equation*}
\HMt_{\bullet}(Y,\spin),\quad \HMf_{\bullet}(Y,\spin),\quad \HMb_{\bullet}(Y,\spin)
\end{equation*}
of \cite{KM}. On the hand, looking at the homology of the invariant chain complexes
\begin{equation*}
\check{C}^{\jmath}_{\bullet}(Y,\spin),\quad \hat{C}^{\jmath}_{\bullet}(Y,\spin),\quad\bar{C}^{\jmath}_{\bullet}(Y,\spin)
\end{equation*}
one obtains the $\Pin$-monopole Floer homology groups fitting in a long exact sequence
\begin{center}
\begin{tikzpicture}
\matrix (m) [matrix of math nodes,row sep=1.5em,column sep=0.5em,minimum width=2em]
  {
 \HSt_{\bullet}(Y,\spin)  && \HSf_{\bullet}(Y,\spin)\\
  &\HSb_{\bullet}(Y,\spin)\\};
  \path[-stealth]
  (m-1-1) edge node [above]{$j_*$} (m-1-3)
   (m-2-2) edge node [left]{$i_*$}(m-1-1)
  (m-1-3) edge node [right]{$p_*$} (m-2-2)  
  ;
\end{tikzpicture}
\end{center}
where the maps $i_*$ and $j_*$ preserve the grading, while $p_*$ has degree $-1$. These are $\mathbb{Q}$-graded modules over $\Rin=\HSf_{\bullet}(S^3)$, where the action is induced in homology by the multiplication map
\begin{equation*}
m_2:\hat{C}^{\jmath}_{\bullet}(Y)\otimes \hat{C}^{\jmath}_{\bullet}(S^3)\rightarrow \hat{C}^{\jmath}_{\bullet}(Y)
\end{equation*}
induced by the cobordism obtained by $([0,1]\times Y)\setminus \mathrm{int}(B^4)$ by attaching cylindrical ends. In \cite{Lin4}, we introduced higher multiplications
\begin{equation*}
m_n:\hat{C}^{\jmath}_{\bullet}(Y)\otimes \hat{C}^{\jmath}_{\bullet}(S^3)^{\otimes n-1}\rightarrow \hat{C}^{\jmath}_{\bullet}(Y)
\end{equation*}
obtained (in the spirit of Baldwin and Bloom's unpublished construction of a monopole category) by looking at an $(n-2)$-dimensional family of metrics and perturbations parametrized by the associahedron $K_n$. It is shown in \cite{Lin4} that in the simplest case in which $Y$ is $S^3$, these operations (which we denote $\mu_n$) define and $\Ainf$-algebra structure on $\hat{C}^{\jmath}_{\bullet}(S^3)$, and for each $Y$ the operations $m_n$ on $\hat{C}^{\jmath}_{\bullet}(Y)$ define a $\Ainf$-module structure over it (see \cite{Lin4} for a quick introduction to $\Ainf$-structures, and \cite{Val} for a more detailed survey). For a fixed choice of data on $S^3$, such an $\Ainf$-module structure on $\hat{C}(Y)$ is well defined up to $\Ainf$-quasi-isomorphism.
\begin{remark}
There are some technical subtleties involved in the construction of \cite{Lin4}, as one needs to impose certain transversality conditions on the chains involved. In particular, the higher composition maps are only partially defined. On the other hand, for the content of this paper (which is mostly algebraic in nature), it will not be harmful to treat the structure constructed in \cite{Lin4} as genuine $\Ainf$-structures.
\end{remark}

\textit{Formality and connected sums}. Recall that an $\Ainf$-algebra $\mathcal{A}$ is called \textit{formal} if it is quasi-isomorphic to its homology (see for example \cite{GMor}). A classical obstruction to formality is provided by Massey products: given the homology classes $[a], [b]$ and $[c]$ in $H_{*}(\mathcal{A})$ such that $[a]\cdot[b]=[b]\cdot[c]=0$, after choosing $r, s$ such that $\partial r=ab$ and $\partial s=bc$, we define the \textit{triple Massey products} to be the homology class
\begin{equation*}
\langle[a],[b],[c]\rangle= [rc+as+\mu_3(a,b,c)].
\end{equation*}
Such a product is well-defined in a suitable quotient of $H_*(\mathcal{A})$. 
Inductively, one can define the $n$-fold Massey products for $n$-uples of homology classes such that all lower Massey products vanish in a consistent way. In the present paper, we will mostly focus on triple and four-fold Massey products. The analogous definitions carry over when defining the Massey products for an $\Ainf$-module $\mathcal{M}$ over $\mathcal{A}$.
\par
It was shown in \cite{Lin4} the $\Ainf$-structure on $ \hat{C}^{\jmath}_{\bullet}(S^3)$ is \textit{not} formal: while the relevant triple Massey products are zero, we have
\begin{equation*}
\langle Q,Q^2,Q,Q^2\rangle=V.
\end{equation*}
Intuitively speaking, this is a cohomological manifestation of the non-triviality of the fiber bundle
\begin{equation*}
\mathbb{R}P^2\hookrightarrow B\Pin\rightarrow \mathbb{H}P^{\infty}.
\end{equation*}
The goal of this paper is to explore the non-formality properties of the $\Ainf$-module $\hat{C}^{\jmath}_{\bullet}(Y)$. This is particularly interesting in light of the the main theorem of \cite{Lin4}, which we now recall.
\begin{thm}\label{mainconn} There exists a quasi-isomorphism
\begin{equation*}
\hat{C}^{\jmath}_{\bullet}(Y_0,\spin_0)\tilde{\otimes}_{\hat{C}(S^3)}\big(\hat{C}^{\jmath}_{\bullet}(Y_1,\spin_1)\big)^{\mathrm{opp}}\cong\hat{C}^{\jmath}_{\bullet}(Y_0\hash Y_1,\spin\hash \spin_1) \langle-1\rangle.
\end{equation*}
where $\mathrm{opp}$ denotes the opposite module.
\end{thm}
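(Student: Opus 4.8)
The plan is to realize the claimed quasi-isomorphism by a single $\Ainf$-module morphism assembled from Seiberg-Witten cobordism maps over families of metrics and perturbations, in the spirit of the associahedral construction of the higher products $m_n$ recalled above. Recall first that, as a graded module, the $\Ainf$-tensor product on the left is
$$\bigoplus_{n\geq 0}\hat{C}^{\jmath}_{\bullet}(Y_0)\otimes\hat{C}^{\jmath}_{\bullet}(S^3)^{\otimes n}\otimes\hat{C}^{\jmath}_{\bullet}(Y_1),$$
with bar differential built from the module maps $m_k$ acting on the two outer factors, the algebra maps $\mu_k$ acting on consecutive blocks of the inner $\hat{C}^{\jmath}_{\bullet}(S^3)$ factors, and the internal differentials. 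The grading shift $\langle -1\rangle$ reflects the dimension of the connected-sum cobordism relative to a product.

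First I would define a map
$$F\colon\hat{C}^{\jmath}_{\bullet}(Y_0)\tilde{\otimes}_{\hat{C}(S^3)}\big(\hat{C}^{\jmath}_{\bullet}(Y_1)\big)^{\mathrm{opp}}\longrightarrow \hat{C}^{\jmath}_{\bullet}(Y_0\# Y_1)\langle -1\rangle$$
whose restriction to the $n$-th summand counts solutions on the cobordism obtained from $([0,1]\times Y_0)$ and $([0,1]\times Y_1)$ joined by a four-dimensional one-handle, so that the outgoing end is $Y_0\# Y_1$, with $n$ interior four-balls removed, the resulting family of metrics and perturbations being parametrized by a polytope of associahedral type. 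The $n=0$ component is the ordinary connected-sum cobordism map $\hat{C}^{\jmath}_{\bullet}(Y_0)\otimes\hat{C}^{\jmath}_{\bullet}(Y_1)\to\hat{C}^{\jmath}_{\bullet}(Y_0\# Y_1)$, and the higher components supply exactly the homotopies forcing $F$ to intertwine the bar differential with the differential on the target.

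To check that $F$ is a chain map I would identify the codimension-one boundary strata of the compactified parametrized moduli spaces. Breaking of trajectories at the two incoming and one outgoing ends reproduces the internal differentials of the three tensor factors and of the target complex, while degenerations along the faces of the parameter polytope — where removed balls collide and a neck on a separating $S^3$ pinches — reproduce precisely the $m_k$ and $\mu_k$ contributions to the bar differential. Matching these two lists of strata, exactly as in the verification that the $m_n$ themselves define an $\Ainf$-module structure, yields $F\circ d_{\mathrm{bar}}=\partial\circ F$.

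The remaining, and hardest, step is to prove that $F$ is a quasi-isomorphism, and this is where I expect the main obstacle to lie. I would filter the bar complex by the number of inner $\hat{C}^{\jmath}_{\bullet}(S^3)$ factors and the target by the length of the connected-sum neck; the map $F$ respects these filtrations, and the induced map of spectral sequences is the comparison between the Eilenberg-Moore spectral sequence and the geometric neck-stretching spectral sequence. On the associated graded the neck-stretching theorem identifies the leading part of $F$ with the standard gluing isomorphism over $\hat{C}(S^3)$, so $F$ is an isomorphism at $E^1$ and hence a quasi-isomorphism, using that the filtrations are bounded in each fixed grading to guarantee convergence. The genuine difficulty here is analytic rather than formal: one must choose the family of metrics and perturbations over the associahedra so that all moduli spaces are cut out transversally and their compactifications have exactly the expected boundary, with no spurious bubbled or reducible configurations. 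This is delicate in the $\Pin$-symmetric, self-conjugate setting, where reducibles cannot be perturbed away and must be handled through the blow-up formalism; controlling them uniformly over the parameter family, together with establishing the gluing theorem for the degenerating $S^3$ necks, is the technical heart of the argument.
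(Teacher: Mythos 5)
The paper you were given does not actually prove this statement: Theorem \ref{mainconn} is quoted verbatim as ``the main theorem of \cite{Lin4}'' and is used as a black box to set up the Eilenberg--Moore spectral sequence, so there is no in-paper argument to compare yours against. That said, your outline is consistent with the architecture of the proof in \cite{Lin4}: a morphism $F$ out of the bar-type complex $\bigoplus_n \hat{C}^{\jmath}_{\bullet}(Y_0)\otimes\hat{C}^{\jmath}_{\bullet}(S^3)^{\otimes n}\otimes\hat{C}^{\jmath}_{\bullet}(Y_1)$ defined by counting solutions on the connected-sum cobordism with $n$ balls removed, over a family of metrics and perturbations parametrized by an associahedron-type polytope; the chain-map identity from matching codimension-one strata (breaking at ends versus facets of the polytope); and a filtration argument for the quasi-isomorphism. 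The grading shift $\langle-1\rangle$ and the appearance of the opposite module are also correctly placed.

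As a proof, however, what you have written is a plan whose two load-bearing steps are asserted rather than carried out. First, the existence of a coherent family of metrics and perturbations making all parametrized moduli spaces transverse, with compactifications having exactly the listed boundary strata, is precisely the technical content in the Morse--Bott, $\delta$-chain setting (recall that in this framework the higher compositions are only partially defined and reducibles cannot be perturbed away); you correctly flag this but do not address it. Second, the quasi-isomorphism step is where your sketch is vaguest: the ``geometric neck-stretching spectral sequence'' on the target is not an object you have defined, and the real argument is that the length-zero component of $F$ --- the ordinary connected-sum cobordism map together with the (cohomological) unit of $\hat{C}^{\jmath}_{\bullet}(S^3)$ --- induces an isomorphism on the $E^1$-page of the filtration by the number of interior $S^3$ factors. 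Making this precise requires identifying that $E^1$-page with the tensor product of $H_*$ of one factor with the bar resolution of the other, for which cohomological unitality of $\hat{C}^{\jmath}_{\bullet}(S^3)$ is the key input (the paper itself singles this out when deducing the corollary), and then invoking convergence of a filtration that is not bounded, so one must argue degree-by-degree or pass to completions. Without these two ingredients the proposal does not yet constitute a proof, though the route it charts is the right one.
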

Here by $\langle n\rangle$ we denote grading shift \textit{downwards} by $n$, i.e.
\begin{equation*}
(M\langle n\rangle)_d=M_{d-n},
\end{equation*}
while $\tilde{\otimes}$ denotes the $\Ainf$-tensor product. Recall that if $\mathcal{N}$ and $\mathcal{M}$ are (respectively a right and left) $\Ainf$-modules over $\mathcal{A}$, their $\Ainf$-tensor product is defined to be the vector space
\begin{equation*}
\mathcal{N}\tilde{\otimes}\mathcal{M}=\bigoplus_n N\otimes A^n\otimes M
\end{equation*}
equipped with the differential
\begin{align*}
\partial (\x|a_1|\cdots|a_n|\y)&=\sum_{i=0}^{n} m_{i+1}(\x|a_1|\cdots|a_i)|a_{i+1}|\cdots|a_n|\y\\
&+\sum_{i=1}^n\sum_{j=0}^{n-i}\x|a_1|\cdots |a_{i-1}|\mu_{j-i+1}(a_i|\cdots| a_j)|a_{j+1}|\cdots|a_n|\y\\
&+\sum_{i=1}^n\x|a_1|\cdots|a_{i_1}|m_{n-i-1}(a_{i}|\cdots|a_n|\y).
\end{align*}
Here $M,A$ and $N$ denote the underlying $\ztwo$-vector spaces of $\mathcal{M},\mathcal{A}$ and $\mathcal{N}$ and, for simplicity, we will always denote elements of tensor products with bars $\lvert$ instead of $\otimes$s. We can consider the natural filtration given by $F_N=\bigoplus_{n\leq N} N\otimes A^n\otimes M$, and obtain the following.
\begin{cor}
There is a spectral sequence whose $E^2$-page is
\begin{equation*}
\mathrm{Tor}^{\Rin}_{*,*}(\HSf_{\bullet}(Y_0,\spin_0),\HSf_{\bullet}(Y_1,\spin_1))
\end{equation*}
and converges up to grading shift to $\HSf_{\bullet}(Y_0\hash Y_1,\spin_0\hash\spin_1)\langle-1\rangle$.
\end{cor}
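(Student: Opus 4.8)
The plan is to treat $\mathcal{N}\Atim\mathcal{M}$ as a filtered complex and run the standard spectral sequence of the filtration $\{F_N\}$, identifying the first pages by hand. First I would check that $\{F_N\}$ is an exhaustive increasing filtration by subcomplexes. Inspecting the three families of terms in the differential on $\mathcal{N}\Atim\mathcal{M}$, each term either preserves the number $n$ of internal bars (precisely when the operation applied is a single $m_1$ or $\mu_1$) or strictly decreases it (whenever some $m_{\geq 2}$ or $\mu_{\geq 2}$ collapses several adjacent factors into one). Hence $\partial F_N\subseteq F_N$, the filtration is by subcomplexes, it is exhaustive, and it is bounded below by $F_0=N\otimes M$.

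Next I would compute $E^0$ and $E^1$. On the associated graded, the piece in filtration degree $n$ is $F_n/F_{n-1}=N\otimes A^{\otimes n}\otimes M$, and only the bar-preserving part of the differential survives, namely the tensor-product differential built from the internal differentials $m_1$ on $N$ and $M$ and $\mu_1$ on $A$. Since everything is defined over the field $\ztwo$, the K\"unneth theorem applies with no correction terms, so in filtration degree $n$
\begin{equation*}
E^1_n\cong H_*(N)\otimes H_*(A)^{\otimes n}\otimes H_*(M)=\HSf_{\bullet}(Y_1,\spin_1)\otimes\Rin^{\otimes n}\otimes\HSf_{\bullet}(Y_0,\spin_0),
\end{equation*}
where I use $H_*(A)=\HSf_{\bullet}(S^3)=\Rin$ and identify the homology of the opposite module with $\HSf_{\bullet}(Y_1,\spin_1)$ regarded as a right $\Rin$-module.

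The $d_1$-differential is induced by the part of $\partial$ dropping the bar count by exactly one, i.e. by the single applications of $m_2$ (on the left and right module factors) and of $\mu_2$ (on adjacent algebra factors); passing to homology turns these into the genuine module and algebra multiplications. Thus $(E^1_\bullet,d_1)$ is exactly the (unnormalized) two-sided bar complex $B\big(\HSf_{\bullet}(Y_1,\spin_1),\Rin,\HSf_{\bullet}(Y_0,\spin_0)\big)$, whose homology computes $\Tor$ over the augmented algebra $\Rin$. Since $\Rin$ is commutative over $\ztwo$ the two slots may be interchanged, yielding
\begin{equation*}
E^2_{*,*}\cong\Tor^{\Rin}_{*,*}\big(\HSf_{\bullet}(Y_0,\spin_0),\HSf_{\bullet}(Y_1,\spin_1)\big),
\end{equation*}
the claimed $E^2$-page.

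Finally I would address convergence and abutment. The total homology $H_*(\mathcal{N}\Atim\mathcal{M})$ is, by Theorem \ref{mainconn}, isomorphic to $\HSf_{\bullet}(Y_0\hash Y_1,\spin_0\hash\spin_1)\langle-1\rangle$, which is the target asserted in the statement; so it remains to argue that the spectral sequence actually converges to it. This is the step where I expect the only real difficulty: the filtration is exhaustive but not finite, and $\Rin=\ztwo[[V]][Q]/Q^3$ is complete in $V$, so one cannot simply quote boundedness. I would instead invoke the convergence criterion for exhaustive filtrations, controlling the interaction between the internal $\mathbb{Q}$-grading and the filtration (bar) degree so that in each fixed bidegree only finitely many pages are relevant and the associated graded of the abutment is recovered. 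Establishing this compatibility carefully is the main obstacle; the algebraic identification of $E^2$ with $\Tor$ is otherwise routine.
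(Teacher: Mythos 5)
Your proposal is correct and follows essentially the same route as the paper: filter $\mathcal{N}\Atim\mathcal{M}$ by the number of internal bar factors, identify $E^1$ with the two-sided bar complex of $\Rin=H_*(\hat{C}^{\jmath}_{\bullet}(S^3))$ acting on the homologies of the two modules (the paper phrases the needed input as $\hat{C}^{\jmath}_{\bullet}(S^3)$ being cohomologically unital, which you use implicitly when passing to the unital algebra $\Rin$), so that $E^2$ is the stated $\Tor$, with the abutment supplied by Theorem \ref{mainconn}. The convergence worry you flag is milder than you suggest, since the filtration is increasing, exhaustive and bounded below ($F_{-1}=0$), which already yields convergence by the classical criterion.
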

We will refer to this as the \textit{Eilenberg-Moore spectral sequence}, see \cite{Lin4} for the heuristic motivation. Here $\mathrm{Tor}^{\Rin}_{*,*}$ is taken in the category of graded $\Rin$-modules, and is therefore a bigraded object. In particular, it can be computed by taking a graded projective resolution of $M$, tensoring it with $N$ and taking the homology of the resulting complex. Recall, as a general fact, given modules $M_0,M_1$ over $\Rin$ the identity
\begin{equation}\label{Tor0}
\mathrm{Tor}^{\Rin}_{0,*}(M_0,M_1)=M_0\otimes_{\Rin} M_1
\end{equation}
holds. The corollary follows from the fact that the $E^1$-page of the spectral sequence associated to the filtration $\{F_N\}$ on $\mathcal{N}\tilde{\otimes}\mathcal{M}$ can be naturally identified with the tensor product of $H_*(\mathcal{M})$ with the bar resolution of $H_*(\mathcal{N})$ (here the key point is that $\hat{C}^{\jmath}_{\bullet}(S^3)$ is cohomologically unital). Of course, $\mathrm{Tor}^{\Rin}_{*,*}$ is independent of the choice of resolution; in Section \ref{projres}, we will discuss some convenient resolutions that allow to compute $\mathrm{Tor}^{\Rin}_{*,*}$ efficiently for our purposes.
\par
While the computation of the $E^2$-page only depends on the module structure, the key observation is that the higher differentials in the Eilenberg-Moore spectral sequence are determined by the Massey products of the two summands. We have, for example, the following (which is proved by a standard staircase argument, see for example \cite{Lin4}).
\begin{lemma}\label{d2}
Suppose we are given $\x\in H_*(\mathcal{M})$, $r_1,\dots, r_n\in H_*(\mathcal{A})$ and $\y\in H_*(\mathcal{N})$ such that
\begin{equation*}
\x r_1=r_1r_2=\dots=r_{n-1}r_n=r_n\y=0.
\end{equation*}
In particular, we have that $\x|r_1|\dots|r_n|\y$ defines a class in $(E^2_{n,*},d_2)$. Then,
\begin{align*}
d_2(\x|r_1|\dots|r_n|\y)&=\langle \x,r_1,r_2\rangle|r_3|\dots|r_n|\y+\x|\langle r_1,r_2,r_3\rangle|r_4|\dots|r_n|\y\\
&+\x|r_1|\dots|\langle r_{n-2},r_{n-1},r_n\rangle|\y+\x|r_1|\dots|\langle r_{n-1},r_n,\y\rangle
\end{align*}
as an element of $E^2_{n-2,*+1}$.
\end{lemma}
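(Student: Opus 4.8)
The plan is to run the standard staircase computation at the chain level: lift every homology class to a cycle, use the vanishing of the consecutive products to manufacture bounding chains, and then read off the length-$(n-2)$ component of the corrected differential, matching each piece with a triple Massey product.

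Concretely, I would write the differential on $\mathcal{N}\tilde{\otimes}\mathcal{M}$ as $\partial = d_0 + d_1 + d_2 + \cdots$, where $d_k$ lowers the tensor length by $k$ and is built from the $(k+1)$-ary operations (so $d_0$ is the internal differential from $\mu_1$ and the $m_1$'s, $d_1$ uses $\mu_2, m_2$, and $d_2$ uses $\mu_3, m_3$). Fix $d_0$-closed representatives, still denoted $\x, r_1, \dots, r_n, \y$. The hypotheses say that each of the $n+1$ consecutive products $m_2(\x, r_1)$, $\mu_2(r_i, r_{i+1})$ (for $1\le i\le n-1$), and $m_2(r_n, \y)$ is a $d_0$-boundary, so I would choose primitives $s_0, s_1, \dots, s_n$ with $d_0 s_0 = m_2(\x, r_1)$, $d_0 s_i = \mu_2(r_i, r_{i+1})$, and $d_0 s_n = m_2(r_n, \y)$. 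These are precisely the choices entering the definitions of the Massey products in the statement.

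Set $z = \x|r_1|\cdots|r_n|\y$. Since the entries are $d_0$-cycles, $d_0 z = 0$, and $d_1 z$ is the sum of the $n+1$ adjacent products, each of which is $d_0$ of the corresponding ``insert a primitive'' element; hence $d_1 z = d_0 w$ with $w = s_0|r_2|\cdots|\y + \x|s_1|r_3|\cdots|\y + \cdots + \x|r_1|\cdots|s_{n-1}|\y + \x|r_1|\cdots|s_n$. This both reconfirms that $z$ survives to $E^2_{n,*}$ and supplies the filtration-lowering correction: by the general recipe for a filtered complex, $d_2[z]$ is represented by the length-$(n-2)$ part of $\partial(z+w)$, which over $\ztwo$ equals $d_2 z + d_1 w$. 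Grouping these terms by position, $d_2 z$ contributes the ternary operations $m_3(\x, r_1, r_2)$, $\mu_3(r_i, r_{i+1}, r_{i+2})$, and $m_3(r_{n-1}, r_n, \y)$, while $d_1 w$ contributes the two flanking ``product with a primitive'' terms at each position, for instance $\mu_2(s_i, r_{i+2})$ and $\mu_2(r_i, s_{i+1})$ in the interior. Summing the three contributions in each slot reproduces exactly $\langle \x, r_1, r_2\rangle$, $\langle r_i, r_{i+1}, r_{i+2}\rangle$, and $\langle r_{n-1}, r_n, \y\rangle$, matching the displayed formula; the leftover terms of $d_1 w$, which are products $\mu_2(r_k, r_{k+1})$ of two genuine cycles appearing away from the inserted primitive, are $d_0$-boundaries and so vanish in $E^1 = H(\,\cdot\,, d_0)$.

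The main things to watch are at the two ends of the string, where one must use the module triple Massey products (built from $m_2, m_3$) rather than the algebra ones, and check that the ``insert $s_0$'' and ``insert $s_n$'' terms of $w$ really produce the correct flanking products $m_2(s_0, r_2)$, $m_2(\x, s_1)$ and $m_2(s_{n-1}, \y)$, $m_2(r_{n-1}, s_n)$. Because we work over $\ztwo$ there are no signs, which eliminates the usual bookkeeping hazard; the only genuine point requiring care is that $d_2 z + d_1 w$ is a $d_0$-cycle, so that it defines a class in $E^1_{n-2,*}$, and that the stray $d_0$-exact products drop out — both follow formally from $\partial^2 = 0$ and the $\Ainf$-relations defining the tensor-product differential. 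Finally, the indeterminacy of each Massey product coming from the choice of primitives $s_k$ matches the indeterminacy of the chosen representative in $E^2$, so the identity holds as stated in $E^2_{n-2,*+1}$.
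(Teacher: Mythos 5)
Your proposal is correct and is precisely the ``standard staircase argument'' that the paper invokes for this lemma (the paper gives no further details, deferring to \cite{Lin4}): you filter by tensor length, correct the cycle $z$ by the primitive-insertion term $w$ so that the length-$(n-1)$ component of $\partial(z+w)$ vanishes, and identify the length-$(n-2)$ component $d_2z+d_1w$ slot-by-slot with the defining representatives $rc+as+\mu_3(a,b,c)$ of the triple Massey products, the stray $d_0$-exact terms dying already in $E^1$. Nothing further is needed.
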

In general, not every class in $E^2$ can be described as a simple tensor; on the other hand, one can describe the differential of such classes in terms of certain generalized Massey products, see for example Section \ref{connsimple}.
\par
Similarly, one can also determine the $\Rin$-module structure of the connected sum (i.e. the extension problem for $E^{\infty}$) in terms of the Massey biproducts of the $\Ainf$-bimodule operations
\begin{equation*}
m_{i,j}:\hat{C}^{\jmath}_{\bullet}(S^3)^{\otimes i-1}\otimes \hat{C}^{\jmath}_{\bullet}(Y)\otimes \hat{C}^{\jmath}_{\bullet}(S^3)^{\otimes j-1}\rightarrow \hat{C}^{\jmath}_{\bullet}(Y),
\end{equation*}
see \cite{Lin4} for more details. In fact, Theorem \ref{mainconn} provides an isomorphism of $\Ainf$-bimodules.
\\
\par
\textit{Manolescu correction terms. }From the $\Rin$-module structure, taking as inspiration Fr\o yshov's invariant \cite{Fro},\cite{KM}, one can extract plenty of information regarding cobordisms between manifolds. For simplicitly, let $(Y,\spin)$ be a rational homology sphere $Y$ equipped with a self-conjugate spin$^c$ structure $\spin$ or, equivalently, a spin structure (as $b_1=0$). We can fix an identification, up to grading shift, of graded $\Rin$-modules
\begin{equation*}
\HSb_{\bullet}(Y,\spin)\equiv \Rint
\end{equation*}
where we set
\begin{equation*}
\Rint=\ztwo[V^{-1},V]][Q]/(Q^3),
\end{equation*}
where $\ztwo[V^{-1},V]]$ (which we denote by $\V$) denotes Laurent power series. We have in particular the direct sum of $\ztwo[[V]]$-modules
\begin{equation*}
\Rint=\V\oplus Q\cdot \V\oplus Q^2\cdot \V.
\end{equation*}
Recall that $\HSt_{\bullet}(Y,\spin)$ and $\HSf_{\bullet}(Y,\spin)$ vanish in degrees respectively low and high enough, so that in particular
\begin{align*}
i_*&:\HSb_{\bullet}(Y,\spin)\equiv \Rint\rightarrow \HSt_{\bullet}(Y,\spin)\\
p_*&:\HSf_{\bullet}(Y,\spin)\equiv \Rint\rightarrow \HSb_{\bullet}(Y,\spin).
\end{align*}
are isomorphism is degrees respectively high and low enough.
\begin{defn}
Given a nonzero $r\in\Rint$, we say that $\x\in\HSf_{\bullet}(Y,\spin)$ is \textit{based of type $r$} if $p_*(\x)=r$. If $p_*(\x)=0$, we say that $\x$ is \textit{unbased}. We will say that $\x$ is $\V$, $Q\cdot \V$ or $Q^2\cdot \V$-based according to the where $p_*(\x)$ belongs to.
\end{defn}
We also call the images of $\V$, $Q\cdot \V$ and $Q^2\cdot \V$ in $\HSf_{\bullet}(Y,\spin)$ under $i_*$ respectively the $\alpha$, $\beta$ and $\gamma$-tower. The Manolescu correction terms (defined first in the setting on $\Pin$-equivariant Seiberg-Witten Floer homology \cite{Man2}) are the homology cobordism invariants defined as
\begin{align*}
\alpha&=\frac{1}{2}\mathrm{min} \{\mathrm{deg}(\x)| \x\in \alpha\text{-tower}\}\\
\beta&=\frac{1}{2}(\mathrm{min} \{\mathrm{deg}(\x)| \x\in \beta\text{-tower}\}-1)\\
\gamma&=\frac{1}{2}(\mathrm{min} \{\mathrm{deg}(\x)| \x\in \gamma\text{-tower}\}-2).
\end{align*}
Using the long exact sequence relating the three Floer groups, these numerical invariants can also be described in terms of based elements of $\HSf_{\bullet}(Y,\spin)$ as follows
\begin{align*}
\alpha&=-\frac{1}{2}\left(\mathrm{max} \{\mathrm{deg}(\x)|\text{there exist a $Q^2\cdot \V$-based element $\x$}\}+4\right)\\
\beta&=-\frac{1}{2}\left(\mathrm{max} \{\mathrm{deg}(\x)|\text{there exist a $Q\cdot \V$-based element $\x$}\}+3\right)\\
\gamma&=-\frac{1}{2}\left(\mathrm{max} \{\mathrm{deg}(\x)|\text{there exist a $ \V$-based element $\x$}\}+2\right).
\end{align*}
From this and Poincar\'e duality
\begin{equation*}
\HSt^k(-Y,\spin)\cong\HSf_{-1-k}(Y,\spin)
\end{equation*}
the identities for the correction terms
\begin{align*}
\alpha(-Y,\spin)&=-\gamma(Y,\spin)\\
\beta(-Y,\spin)&=-\beta(Y,\spin)\\
\gamma(-Y,\spin)&=-\alpha(Y,\spin)
\end{align*}
These invariants are lifts of $-\mu(Y,\spin)$, where $\mu$ denotes the Rokhlin invariant, and they provide obstructions to the existence of spin cobordism with $b_2^{+}=0,1,2$, see \cite{Lin},\cite{Lin2}. In particular, they are invariant under homology cobordism.
\par
Of course, one can define analogues of the correction terms also in cases in which $b_1>0$, depending on the structure of $\HSb_{\bullet}(Y,\spin)$. In particular, one can define a correction term for each $\ztwo[V]$ summand of $\HSb_{\bullet}(Y,\spin)$. It is shown in \cite{Lin5} that this only depends on the triple cup product of $Y$, together with the Rokhlin invariants of the $2^{b_1(Y)}$ spin structures inducing $\spin$. For example, there are two cases when $b_1(Y)=1$: when the two spin structures have the same Rokhlin invariant, one obtains six correction terms, while in the case the two spin structures have different Rokhlin one obtains four correction terms (see \cite{Lin2} and also Section \ref{examples}).
\\
\par
\textit{Definition of the Floer chain complexes. }Let us now review the main feature of the Floer chain complexes introduced in \cite{Lin}. The key input of $\Pin$-symmetry is a natural involution on the moduli space of configurations
\begin{equation*}
\jmath: \mathcal{B}(Y,\spin)\rightarrow\mathcal{B}(Y,\spin),
\end{equation*}
whose fixed points are the reducible configurations $[B,0]$ where $B$ is the spin connection of one of the $2^{b_1(Y)}$ spin structures inducing $\spin$. One would like to perform the construction of the Floer chain complexes from \cite{KM} in a way that such a symmetry is preserved. The main complication is that one needs to work with Morse-Bott singularities. In particular, for a generic perturbation, the critical set in the blow up $\mathcal{B}^{\sigma}(Y,\spin)$ consists of:
\begin{itemize}
\item a finite number of pairs of irreducible solutions, acted freely by the action of $\jmath$;
\item for each non spin reducible critical point, an infinite tower of critical points as in the Morse setting. The free action of $\jmath$ of non-spin reducible critical points lifts to an action of the towers;
\item for each spin reducible critical point, an infinite tower of reducible $S^2$s. The involution $\jmath$ acts as the antipodal map on each critical submanifold.
\end{itemize}
The chain complexes with involution (\ref{flochain}) arise as some version of Morse-Bott chain complexes. The underlying vector spaces are generated over $\ztwo$ by suitable geometric chains with values in the critical submanifolds, i.e. smooth maps
\begin{equation*}
f:\sigma\rightarrow C
\end{equation*}
where $\sigma$ is chosen among a suitable generalization of manifolds with boundary and $C$ is a critical submanifold. The differential of such a chain $\sigma$ combines the singular boundary within $C$ together with fibered products with moduli spaces of flows $M(C,C')$ from $C$ to another critical submanifold $C'$, which we consider as singular chains with values in $C'$. In our case, we are naturally lead to deal with $\delta$-chains and the key modification (inspired from \cite{Lip}) is that we consider chains which are non-degenerate, namely both $f(\sigma)$ and $f(\partial\sigma)$ are not contained in the image of smaller dimensional chains.
For our purposes, we will only need that $3$-cycles in our critical $S^2$s are zero at the chain level.
\begin{example}\label{chainvan}
Consider the classes $Q,Q^2\in\Rin=\HSf_{\bullet}(S^3)$. These are represented respectively by a generator in the first and zero dimensional homology of $C_{-1}$, the first unstable critical submanifold (where we consider the round metric on $S^3$, and a small perturbation). Of course, we know $Q\cdot Q^2=0$. In fact, such a product is zero at the chain level: for dimensional reasons, it is a $3$-chain in $C_{-2}$, and because it is closed, it vanishes.
For a similar reason, the triple Massey product $\langle Q,Q^2,Q\rangle$ also vanishes at the chain level.
\end{example}

\vspace{0.3cm}
\section{Description of Massey products}\label{DescrMassey}
In general, the determination of the Massey products of an $\Ainf$-module over an $\Ainf$-algebra is a rather involved process, as it requires the understanding of higher compositions. Our goal in the present section is to show that in the case of $\Pin$-monopole Floer homology, many natural Massey (bi)products can be described very explicitly in terms of the relation with the $U$-action in usual monopole Floer homology. While we will work in the setting of $\HSf_{\bullet}$, all results carry over for $\HSt_{\bullet}$ and $\HSb_{\bullet}$. Before stating the main results of the section, let us recall the Gysin exact sequence
\begin{center}
\begin{tikzpicture}
\matrix (m) [matrix of math nodes,row sep=1em,column sep=0.5em,minimum width=2em]
  {
 \HSf_{\bullet}(Y,\spin)  && \HSf_{\bullet}(Y,\spin)\\
  &\HMf_{\bullet}(Y,\spin)\\};
  \path[-stealth]
  (m-1-1) edge node [above]{$\cdot Q$} (m-1-3)
   (m-2-2) edge node [left]{$\pi_*$}(m-1-1)
  (m-1-3) edge node [right]{$\iota$} (m-2-2)  
  ;
\end{tikzpicture}
\end{center}
introduced in \cite{Lin}. Here the maps $\iota$ and $\pi$ preserve the degree, while multiplication by $Q$ has degree $-1$. It is an exact triangle of $\Rin$-modules where on $\HMf_{\bullet}(Y,\spin)$ we have that $Q$ acts as $0$ and $V$ acts as $U^2$. In the case of a homology sphere of Rokhlin invariant $0$, in degrees between $-4k$ and $-4k-3$ with $k>>0$ the sequence looks like
\begin{center}
\begin{tikzpicture}
\matrix (m) [matrix of math nodes,row sep=1.5em,column sep=3em,minimum width=2em]
  {\cdot& \cdot &\cdot\\
 \ztwo  &\ztwo& \ztwo\\
  \ztwo&\cdot&\ztwo\\
  \ztwo &\ztwo&\ztwo\\};
  \path[-stealth]
  (m-2-1) edge node {} (m-2-2)
  (m-4-2) edge node {} (m-4-3)
  (m-2-3) edge node {} (m-3-1)
  (m-3-3) edge node {} (m-4-1)
  ;
\end{tikzpicture}
\end{center}
where the side columns represents $\HSf_{\bullet}$ while the middle column represents $\HMf_{\bullet}$. Let us record the following general result.

\begin{lemma}\label{Q2}
If $\x\in \HSf_{\bullet}(Y,\spin)$, then $Q^2\cdot\x=\pi_*(U\cdot \iota_*(\x))$.
\end{lemma}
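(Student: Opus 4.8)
The plan is to reduce the identity to the single computation $\pi_*(U)=Q^2$ on $S^3$ and then to propagate it to a general $Y$ through the module structure. On $S^3$ the computation is forced by the exact triangle. Indeed $\HSf_{\bullet}(S^3)=\Rin=\ztwo[[V]]\{1,Q,Q^2\}$ is free of rank three over $\ztwo[[V]]$, and $\cdot Q$ has kernel $\ztwo[[V]]\cdot Q^2$ and image $\ztwo[[V]]\{Q,Q^2\}$; exactness of the Gysin triangle then gives $\mathrm{im}\,\pi_*=\ker(\cdot Q)=\ztwo[[V]]\cdot Q^2$ and $\ker\,\pi_*=\mathrm{im}\,\iota$. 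Since $\iota$ is $\Rin$-linear with $\iota(Q)=Q\cdot\iota(1)=0$, we get $\mathrm{im}\,\iota=\ztwo[[V]]\cdot\iota(1)$, which under the identification $\HMf_{\bullet}(S^3)\cong\ztwo[[U]]$ (with $V=U^2$) is exactly the even part $\ztwo[[U^2]]$. As $U\notin\ztwo[[U^2]]$, we have $\pi_*(U)\neq0$, and since $\pi_*$ preserves degree and the appropriate graded piece of $\mathrm{im}\,\pi_*$ is spanned by $Q^2$, this forces $\pi_*(U)=Q^2$. This step is purely formal once $V=U^2$ and $Q=0$ on $\HMf_{\bullet}$ are granted.

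To pass to an arbitrary $(Y,\spin)$, I would use that the Gysin triangle for $Y$ is a triangle of modules over the Gysin triangle for $S^3$, the $U$-action on $\HMf_{\bullet}(Y,\spin)$ being the action of the generator $U\in\HMf_{\bullet}(S^3)$. The crucial ingredient is the projection (Frobenius) formula
\begin{equation*}
\pi_*\big(m\cdot\iota(\x)\big)=\pi_*^{S^3}(m)\cdot\x,\qquad m\in\HMf_{\bullet}(S^3),\ \x\in\HSf_{\bullet}(Y,\spin),
\end{equation*}
where on the left $m$ acts on $\HMf_{\bullet}(Y,\spin)$ and on the right $\pi_*^{S^3}(m)\in\Rin$ acts on $\HSf_{\bullet}(Y,\spin)$. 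Taking $m=U$ and substituting the base computation $\pi_*^{S^3}(U)=Q^2$ yields exactly $\pi_*(U\cdot\iota(\x))=Q^2\cdot\x$, as desired.

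The main obstacle is the projection formula itself. In contrast with the $\Rin$-linearity of $\iota$ and $\pi_*$ that is already recorded, this identity intertwines the $\HMf_{\bullet}(S^3)$-module structure on $\HMf_{\bullet}(Y,\spin)$ with the $\Rin$-module structure on $\HSf_{\bullet}(Y,\spin)$, and is therefore a genuinely geometric input: it has to be extracted from the construction of $\iota$ and $\pi_*$ in \cite{Lin} as cobordism maps, by verifying that multiplication by the $S^3$-classes is compatible with the families of metrics and perturbations defining the two maps. Granting this naturality, the two elementary steps above conclude the argument.
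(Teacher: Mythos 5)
Your overall architecture is reasonable, and your formal derivation of $\pi_*(U)=Q^2$ on $S^3$ from exactness of the Gysin triangle (together with $Q=0$, $V=U^2$ on $\HMf_{\bullet}$ and the fact that $\iota$ and $\pi_*$ preserve degree) is correct. But the proof as written has a genuine gap: the projection formula $\pi_*\big(m\cdot\iota(\x)\big)=\pi_*^{S^3}(m)\cdot\x$, which you explicitly grant rather than prove, is not an auxiliary naturality statement --- for $m=U$ it \emph{is} the lemma once $\pi_*^{S^3}(U)=Q^2$ is known, so deferring it means the essential content has not been established. Moreover, your description of how one would verify it (``compatibility with the families of metrics and perturbations defining the two maps'') points in a slightly misleading direction: no comparison of perturbation data is needed, because $\iota$ and $\pi_*$ are not independent cobordism maps but the two maps in the long exact sequence associated to the short exact sequence of chain complexes
\begin{equation*}
0\rightarrow \hat{C}^{\jmath}_{\bullet}(Y,\spin)\hookrightarrow\hat{C}_{\bullet}(Y,\spin)\stackrel{1+\jmath}{\longrightarrow}(1+\jmath)\hat{C}_{\bullet}(Y,\spin)\rightarrow 0,
\end{equation*}
so that $\iota$ is induced by the inclusion and $\pi_*$ by $1+\jmath$ on the very same complex that carries the module multiplication $\hat{m}_2$.

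The paper's proof is exactly the missing verification, carried out at the chain level in a way that also makes your first step unnecessary. Take a base point $p$ in the critical submanifold $C_{-1}$ of $S^3$: then $p$ is a cycle representing $U\in\HMf_{\bullet}(S^3)$, while the invariant cycle $p\cup\jmath p=(1+\jmath)p$ represents $Q^2\in\Rin$. For an invariant cycle $x$ representing $\x$, the class $U\cdot\iota_*(\x)$ is represented by $\hat{m}_2(x|p)$, and $\jmath$-equivariance of the product together with $\jmath x=x$ gives
\begin{equation*}
(1+\jmath)\hat{m}_2(x|p)=\hat{m}_2(x|(1+\jmath)p)=\hat{m}_2(x|p\cup\jmath p),
\end{equation*}
whose left-hand side represents $\pi_*(U\cdot\iota_*(\x))$ and whose right-hand side represents $Q^2\cdot\x$. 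Note that this single identity subsumes both of your steps: taking $Y=S^3$ recovers $\pi_*(U)=Q^2$, so the reduction to $S^3$ is not needed. If you wish to keep your two-step structure, you must at minimum supply the displayed chain-level identity (equivariance of $\hat{m}_2$ plus the identification of $\pi_*$ with $1+\jmath$ at the chain level); with that added, your argument closes.
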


We will prove this later. Granted this, we can will by defining the following Massey operations:
\begin{itemize}
\item if $Q\cdot\x=V\cdot\x=0$, $\langle Q,\x,V\rangle$, which is well defined up to $\mathrm{Im}Q+\mathrm{Im}V$;
\item if $Q\cdot\x=0$, $\langle\x,Q,Q^2\rangle$, well defined up to $\mathrm{Im}Q^2$;
\item if $Q^2\cdot\x=0$, $\langle\x,Q^2,Q\rangle$, well defined up to $\mathrm{Im}Q$.
\item if $Q\cdot\x=0$ and $\langle\x,Q,Q^2\rangle=0$, $\langle\x,Q,Q^2,Q\rangle$ well defined up to $\mathrm{Im}Q$ (recall that $\langle Q,Q^2,Q\rangle=0$, see Example \ref{chainvan}). 
\end{itemize}

On the other hand, using the Gysin exact sequence, we can define the following four operations:
\begin{enumerate}
\item Suppose $Q\cdot\x=V\cdot\x=0$. As $Q\cdot\x=0$, $\x=\pi_*(\y)$ for some $\y$. Then
\begin{equation*}
\pi_*(V\cdot\y)=V\cdot\pi_*(\y)=V\cdot\x=0,
\end{equation*}
so that there exists $\z$ such that $\iota_*(\z)=\y$. We define $\Phi_1(\x)=\z$. It is readily checked that such an element is well defined up to elements in $\mathrm{Im}V+\mathrm{Im}Q$.
\item Suppose $Q\cdot\x$=0.  Then again $\x=\pi_*(\y)$ for some $\y$. We then define $\Phi_2(\x)=\pi_*(U\cdot \y)$. This is well defined up to $\mathrm{Im}Q^2$ in light of Lemma \ref{Q2}.
\item Suppose $Q^2\cdot\x=0$. Then by Lemma \ref{Q2} $\pi_*(U\cdot \iota_*(\x))$, hence $U\cdot \iota_*(\x)=\iota_*(\y)$ for some $\y$. Then we set $\Phi_3(\x)=\y$. This is clearly defined up to $\mathrm{Im}Q$.
\item Suppose $\langle\x,Q,Q^2\rangle=0$. Then, for the map $\Psi_2$ defined in bullet $(2)$ we have that (up to choosing a different $\y$) $\Psi_2(\x)=\pi(U\cdot \y)=0$. Hence $U\cdot\y=\iota(\w)$, and we define $\Psi_4(\x)=\w$.
\end{enumerate}

We will show in Section \ref{examples} that these four operations $\Phi_i$ are explicitly computable in many cases. Their importance for our purposes is the following result.
\begin{thm}\label{Massey}
Let $\x$ be a class in $\HSf_{\bullet}(Y,\spin)$. We have the following identities:
\begin{itemize}
\item if $Q\cdot\x=V\cdot\x=0$, $\langle Q,\x,V\rangle=\Phi_1(\x)$;
\item if $Q\cdot\x=0$, $\langle \x,Q,Q^2\rangle=\Phi_2(\x)$;
\item if $Q^2\cdot\x=0$, $\langle \x,Q^2,Q\rangle=\Phi_3(\x)$;
\item if $\langle \x,Q,Q^2\rangle=0$, $\langle \x,Q,Q^2,Q\rangle=\Phi_4(\x)$.
\end{itemize}
\end{thm}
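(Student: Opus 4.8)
The plan is to deduce all four identities from a single chain-level input: an explicit model of the $U$-action on the mapping cone realizing the Gysin triangle. Since $Q$ is a cycle in $\hat{C}^{\jmath}_{\bullet}(S^3)$, the module operation $m_2(-,Q)$ is a genuine chain map, and I would first model the complex underlying $\HMf_{\bullet}(Y,\spin)$ as the mapping cone $\mathrm{Cone}(m_2(-,Q))=\hat{C}^{\jmath}_{\bullet}(Y)[1]\oplus \hat{C}^{\jmath}_{\bullet}(Y)$ with differential $D(a,b)=(\partial a,\ \partial b+m_2(a,Q))$; its long exact sequence is precisely the Gysin triangle, with $\iota(\mathbf{x})=(0,\mathbf{x})$ and $\pi_*(a,b)=[a]$. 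The key elementary observation is that a cycle $\y=(\mathbf{x},r)$ with $\pi_*(\y)=\x$ exists exactly when $Q\cdot\x=0$, and that its second component then satisfies $\partial r=m_2(\mathbf{x},Q)$, i.e.\ $r$ is exactly the null-homotopy of $\mathbf{x}\cdot Q$ entering the defining formula of the Massey product. This dictionary between $\pi_*$-lifts and Massey-product null-homotopies is what makes the two sides of each identity comparable.

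The technical heart is to pin down a chain-level operator $\widetilde{U}$ on the cone inducing the $U$-action. I would take it of the form $\widetilde{U}(a,b)=\big(m_2(b,Q^2)+m_3(a,Q,Q^2),\ m_2(a,Q^2)+m_3(b,Q^2,Q)+\cdots\big)$, where the omitted terms are higher operations correcting the second component. Checking that $\widetilde{U}$ commutes with $D$ reduces, through the $\Ainf$-module relations, to the chain-level vanishing $\mu_2(Q,Q^2)=\mu_2(Q^2,Q)=0$ and $\mu_3(Q,Q^2,Q)=0$ of Example \ref{chainvan}; for instance the first component works out because $\partial m_3(a,Q,Q^2)=m_3(\partial a,Q,Q^2)+m_2(m_2(a,Q),Q^2)$ once $\mu_2(Q,Q^2)$ vanishes on the nose. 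This construction simultaneously re-proves Lemma \ref{Q2}, since $\pi_*\big(\widetilde{U}\,\iota(\x)\big)=\pi_*\widetilde{U}(0,\mathbf{x})=[m_2(\mathbf{x},Q^2)]=Q^2\cdot\x$.

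Granting this model, each identity becomes a direct comparison. For the second identity, write $\x=\pi_*(\y)$ with $\y=(\mathbf{x},r)$; then $\Phi_2(\x)=\pi_*(\widetilde{U}\,\y)$ is represented by the first component $m_2(r,Q^2)+m_3(\mathbf{x},Q,Q^2)$, which is precisely the $\Ainf$ expression for $\langle\x,Q,Q^2\rangle$ once $\mu_2(Q,Q^2)=0$ removes the $\mathbf{x}\,|\,\mu_2(Q,Q^2)$ term. The third identity is the mirror computation starting from $\iota(\x)=(0,\mathbf{x})$: the hypothesis $Q^2\cdot\x=0$ makes the first component of $\widetilde{U}\,\iota(\x)$ a boundary $\partial r$, and the resulting class in the second component matches $m_2(r,Q)+m_3(\mathbf{x},Q^2,Q)=\langle\x,Q^2,Q\rangle$. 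The first identity is analogous to the third, now lifting via $\pi_*$ and using that $V$ acts on $\HMf_{\bullet}(Y,\spin)$ as $U^2$, modeled by $\widetilde{U}^2$, so that $V\cdot\x=0$ produces the preimage $\z$ under $\iota$ matching the bimodule product $\langle Q,\x,V\rangle$. In each case one checks that the indeterminacies agree: the ambiguity of the lift is $\ker\pi_*=\mathrm{Im}\,\iota$, which by Lemma \ref{Q2} pushes forward exactly to the declared $\mathrm{Im}\,Q^2$, $\mathrm{Im}\,Q$, or $\mathrm{Im}\,Q+\mathrm{Im}\,V$.

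I expect the genuine difficulties to be concentrated in two places. First, determining all components of $\widetilde{U}$ and verifying that it is an honest chain map inducing $U$ (rather than agreeing only to leading order) is delicate, since this is where the higher $\Ainf$-relations and the chain-level vanishing of Example \ref{chainvan} must be used in full; getting the higher-order correction terms right is the main obstacle, and it is essentially the computation underlying Lemma \ref{Q2}. Second, the fourfold identity requires an extra layer: the hypothesis $\langle\x,Q,Q^2\rangle=0$ means $\widetilde{U}\,\y=\iota(\mathbf{w})+D(\text{chain})$, so that $U\cdot\y=\iota(\w)$ with $\Phi_4(\x)=\w$, and one must then match $\w$ with $\langle\x,Q,Q^2,Q\rangle$ by choosing the secondary null-homotopies consistently and invoking $\mu_3(Q,Q^2,Q)=0$. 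Managing these secondary homotopies, and verifying independence of the choices, is the most involved part of the argument.
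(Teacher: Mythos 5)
Your reduction of all four identities to a single chain-level model of the $U$-action is an attractive reorganization, and your verifications that the leading terms of $\widetilde{U}$ form a chain map and reproduce the defining expressions for the Massey products are correct as far as they go. But the argument has a genuine gap exactly where you locate the ``technical heart'': nothing in the proposal shows that the operator $\widetilde{U}$ you write down \emph{is} the $U$-action. Being a chain map on $\mathrm{Cone}(m_2(-,Q))$ inducing the right map on homology does not single out $\widetilde{U}$: one may add to it any chain map inducing zero on homology, and different choices change precisely the off-diagonal components $m_3(a|Q|Q^2)$, $m_4(a|Q|Q^2|Q)$, \dots{} that feed into the Massey-product formulas. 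Lemma \ref{Q2} cannot supply the missing identification, since it only determines the homology-level composite $\pi_*\circ U\circ\iota$, i.e.\ one corner of $\widetilde{U}$ up to homotopy. More fundamentally, $U$ is not an element of $\Rin$, so the $U$-action cannot be reconstructed from the $\Ainf$-module structure of $\hat{C}^{\jmath}_{\bullet}(Y)$ over $\hat{C}^{\jmath}_{\bullet}(S^3)$ by formal manipulation alone; some geometric comparison between the invariant and non-invariant complexes is unavoidable, and without it the choice of $\widetilde{U}$ amounts to assuming the conclusion.

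That comparison is exactly what the paper's proof supplies and what the cone model discards. The paper realizes the Gysin triangle by the short exact sequence $0\to\hat{C}^{\jmath}_{\bullet}(Y)\to\hat{C}_{\bullet}(Y)\to(1+\jmath)\hat{C}_{\bullet}(Y)\to0$ and works inside the honest geometric complex $\hat{C}_{\bullet}(Y)$, where one can multiply by \emph{non-invariant} chains in $\hat{C}_{\bullet}(S^3)$: the hemisphere $D^2$ with $\partial D^2=S^1$ and $S^2=(1+\jmath)D^2$, and the point $p$ with $p\cup\jmath p=(1+\jmath)p$. These identities, together with the normalization $\hat{m}_2(D^2|p)=0$ and the chain-level vanishings of Example \ref{chainvan}, are precisely the input that converts a $\pi_*$-lift of $\x$ into the explicit cycle $t+\hat{m}_2(x|D^2)$ and identifies $\hat{m}_2(-|p)$ applied to it with the Massey-product representative. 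To complete your argument you would have to construct the quasi-isomorphism $\mathrm{Cone}(m_2(-,Q))\simeq\hat{C}_{\bullet}(Y)$ explicitly (via the splitting $x\mapsto\hat{m}_2(x|D^2)$) and transport $\hat{m}_2(-|p)$ through it, which reproduces the paper's computation rather than avoiding it. A secondary issue: the first identity concerns $\langle Q,\x,V\rangle$, which involves left multiplication and the bimodule operation $m_{1,1}$; your cone model is built only from the right module structure, so ``modeled by $\widetilde{U}^2$'' would require a separate argument there as well.
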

In fact, while for simplicity we have limited our exposition to Massey products involving only $Q,Q^2$ and $V$, the result naturally generalizes to the analogous Massey products involving $QV^i, Q^2V^j$ and $V^{k+1}$. Let us for example point out how to compute $\langle Q,\x,V^{k+1}\rangle$, where of course we assume $Q\cdot\x=V^{k+1}\cdot\x=0$. As $Q\x=0$ implies that $\x=\pi(\y)$, and because $V^{k+1}\cdot\x=0$, we have that $\pi_*(U^{2k+2}\cdot\y)=0$, so that $U^{2k+2}\cdot\y=\iota (\z)$. We have then $\langle Q,\x,V^{k+1}\rangle=\z$.

\begin{remark}
Looking at the Gysin sequence of $S^3$, we obtain a direct proof (i.e. without relying on an argument involving the Eilenberg-Moore spectral sequence as in \cite{Lin4}) of the fact that $\langle Q,Q^2,Q,Q^2\rangle=V$.
\end{remark}

\begin{remark}\label{kad}
While our main result involves very specific Massey products, one can in general the natural $\Ainf$-structure in homology provided by Kadeishvili's homotopy transfer theorem \cite{Kad} to obtain more information. Let us for example consider the (classical) Massey product $\langle \x|Q^2|Q|Q^2\rangle=0$, where we assume $\langle \x|Q^2|Q\rangle=0$. Then, recalling the vanishing of the triple products in $\Rin$ and the relation $\langle Q|Q^2,Q|Q^2\rangle=V$, we obtain after substituting the latter in the $\Ainf$-relations, the relation
\begin{equation*}
\x\cdot V=\langle \x|Q^2|Q|Q^2\rangle\cdot Q.
\end{equation*}
In several cases, this is enough to determine $\langle \x|Q^2|Q|Q^2\rangle$.
\end{remark}

\vspace{0.3cm}

The proof of this result occupies the rest of the section. Recall first from \cite{Lin} that the Gysin exact sequence arises as the long exact sequence in homology associated to the short exact sequence of chain complexes
\begin{equation}\label{gysinchain}
0\rightarrow \hat{C}^{\jmath}_{\bullet}(Y,\spin)\hookrightarrow\hat{C}_{\bullet}(Y,\spin)\stackrel{1+\jmath}{\longrightarrow}(1+\jmath)\hat{C}_{\bullet}(Y,\spin)\rightarrow0
\end{equation}
where $\hat{C}_{\bullet}(Y,\spin)$ is the Floer chain complex whose homology is $\HMf_{\bullet}(Y,\spin)$ and $\hat{C}^{\jmath}_{\bullet}(Y,\spin)$ and $(1+\jmath)\hat{C}_{\bullet}(Y,\spin)$ both have homology $\HSf_{\bullet}(Y,\spin)$. We first review how the connecting map in the induced long exact sequence is identified with multiplication by $Q$.
Consider a representative $x$ of a class $\x\in\HSf_{\bullet}(Y,\spin)$. Consider its image under the map induced by the cobordism $I\times Y\setminus \mathrm{int}B^4$ with cylindrical ends attached, where we look at the solutions converging to the first negative critical submanifold $C_{-1}$ on the additional incoming $S^3$ end, or, equivalently the element $\hat{m}_2(x\lvert C_{-1})$ obtained from the product map
\begin{equation*}
\hat{m}_2: \hat{C}_{\bullet}(Y,\spin)\otimes \hat{C}_{\bullet}(S^3)\rightarrow \hat{C}_{\bullet}(Y,\spin).
\end{equation*}
by considering the chain $C_{-1}$ on the second factor. As this maps induces the identity in homology, this element is also a representative of $\x$. Recall that $C_{-1}$ is a copy of $S^2$ on which $\jmath$ acts as the antipodal map. Denote by $D^2$ the upper hemisphere, and  by $S^1=\partial D^2$ the equator (notice that the latter is $\jmath$-invariant). We have then $S^2=D^2\cup \jmath(D^2)=(1+\jmath)D^2$, so that
\begin{equation*}
\hat{m}_2(x| (1+\jmath)D^2)=(1+\jmath)\hat{m}_2(x| D^2)=(1+\jmath)(y)
\end{equation*}
where $y=\hat{m}_2(x| D^2)\in \hat{C}_{\bullet}(Y,\spin)$. Now, as $\partial x=0$, we have
\begin{equation*}
\partial y=\partial\left(\hat{m}_2(x| D^2)\right)=\hat{m}_2(x| \partial D^2)=\hat{m}_2(x\times S^1)
\end{equation*}
which is a $\jmath$-invariant cycle, hence in the image of the inclusion $\hat{C}^{\jmath}_{\bullet}(Y,\spin)\hookrightarrow\hat{C}_{\bullet}(Y,\spin)$. By definition, its class in $\HSf_{\bullet}$ represent the image of $\x$ under the boundary map in the induced long exact sequence. On the other hand, as $S^1$ is a representative of $Q$ in $\HSf_{\bullet}(S^3)=\Rin$, $y$ also represents $Q\cdot\x$.
\\
\par
In a similar spirit, we now provide the proof of Lemma \ref{Q2}.
\begin{proof}[Proof of Lemma \ref{Q2}.]
Let $p$ be a base point in $C_{-1}$. Then the $p$ is a cycle representing $U\in\HMf_{\bullet}(S^3)$, while $p\cup \jmath p$ is an invariant cycle representing $Q^2\in\HSf_{\bullet}(S^3)$. If $y\in\hat{C}_{\bullet}(Y,\spin)$ represents $\y\in \HMf_{\bullet}(Y,\spin)$, then $U\cdot \y$ is represented by $\hat{m}_2(y| p)$, while if $x\in \hat{C}^{\jmath}_{\bullet}(Y,\spin)$ represents $\x\in \HSf(Y,\spin)$, then $Q^2\cdot \y$ is represented by $\hat{m}_2(x| p\cup \jmath p)$. On the other hand we have for $x\in \hat{C}^{\jmath}_{\bullet}(Y,\spin)$ that
\begin{equation*}
\hat{m}_2(x| p\cup \jmath p)=\hat{m}_2(x| (1+\jmath)p)=(1+\jmath)\hat{m}_2(x| p),
\end{equation*}
and the result follows.
\end{proof}

With these simple computations in mind, we are ready to prove Theorem \ref{Massey}.

\begin{proof}[Proof of Theorem \ref{Massey}.] Fix a representative $x\in \hat{C}^{\jmath}_{\bullet}(Y,\spin)$ of $\x$.
\par
We begin with proving the first identity. Recall that both multiplication by $V$ on $\HSt_{\bullet}$ and by $U^2$ on $\HMt_{\bullet}$ are both obtained by multiplication by the second negative critical submanifold $C_{-2}$ on the additional incoming $S^3$ end. Let $y=\hat{m}_2(x| D^2)$ as above, so that its image under $1+\jmath$ is a representative of $\x$, and $\partial y=\hat{m}_2(x| S^1)$ represents $Q\cdot\x=0$. Hence is $\hat{m}_2(x| S^1)=\partial t$ for $t\in\hat{C}^{\jmath}(Y,\spin)$. Consider then $\hat{m}_2(x| C_{-2})$, which represents $V\cdot\x=0$ and hence is $\partial s$ for $s\in (1+\jmath)\hat{C}^{\jmath}_{\bullet}(Y,\spin)$. Then by definition the triple Massey product $\langle Q,\x,V\rangle$ is represented by
\begin{equation*}
\hat{m}_2(t| C_{-2})+\hat{m}_2(S^1| s)+\hat{m}_{1,1}(S^1| x| C_{-2})\in \hat{C}^{\jmath}_{\bullet}(Y,\spin).
\end{equation*}
Consider the image of this cycle in $\hat{C}_{\bullet}(Y,\spin)$. We can add to it the boundaries
\begin{align*}
\partial\hat{m}_{1,1}(D^2| x| C_{-2})&=\hat{m}_{1,1}(S^1| x| C_{-2})+\hat{m}_2(\hat{m}_2(D^2| x)| C_{-2})+\hat{m}_2(D^2| \hat{m}_2(x| C_{-2}))\\
\partial\hat{m}_2(D^2| s)&= \hat{m}_2(S^1| s)+\hat{m}_2(D^2| \hat{m}_2(x| C_{-2}))
\end{align*}
so we see that $\iota (\langle Q,\x,V\rangle)$ is represented by 
\begin{equation*}
\hat{m}_2\left(t+\hat{m}_2(D^2| x)| C_{-2}\right)\in\hat{C}_{\bullet}(Y,\spin).
\end{equation*}
On the other hand, by definition $t+\check{m}(D^2\times x)$ is a cycle in $\hat{C}_{\bullet}(Y,\spin)$ whose image under $\pi_*$ is a representative of $\x$; furthermore, the chain above represents its image under the action of $V=U^2$, so that the result follows.
\par
Regarding the second bullet, recall from Example \ref{chainvan} that $Q^2\cdot Q$ is zero at the chain level. Consider as above the cycle $\hat{m}_2(x| S^2)$, and let $t$ a chain such that $\partial t=  \hat{m}_2(x| S^1)$, where again we use $Q\cdot \x=0$. Then the Massey product $\langle\x,Q,Q^2\rangle$ by definition represented by
\begin{equation*}
\hat{m}_2(t| p\cup\jmath p)+\hat{m}_3(t| S^1| p\cup\jmath p)=(1+\jmath)(\hat{m}_2(t| p)+\hat{m}_3(x| S^1| p))\in (1+\jmath)\hat{C}^{\jmath}_{\bullet}(Y,\spin).
\end{equation*}
Of the two natural disks $D^2$ and $\jmath D^2$ whose boundary is $S^1$, we can assume without loss of generality that $\hat{m}_2(D^2| p)=0$. Adding then to the expression above
\begin{equation*}
\partial[(1+\jmath)(m_3(x|D^2|p))]=(1+\jmath)(\hat{m}_3(x|S^1|p)+\hat{m}_2(\hat{m}_2(x| D^2)| p))
\end{equation*}
we see that the Massey product is represented by
\begin{equation*}
(1+\jmath)\check{m}(t+x\times D^2| p).
\end{equation*}
Now $t+x\times D^2\in \hat{C}^{\jmath}_{\bullet}(Y,\spin)$ is again a cycle mapping to a representative of $\x$, and the result follows.
\par
Regarding the third bullet, $Q^2\cdot \x=0$ so by Lemma \ref{Q2} we can consider $z\in (1+\jmath)\hat{C}^{\jmath}_{\bullet}(Y,\spin)$ such that $\partial z=\hat{m}_2(x| p\cup\jmath p)$. Then by definition the Massey product is represented by
\begin{equation*}
\hat{m}_2(z| S^1)+\hat{m}_3(x| p\cup \jmath p| S^1)\in\hat{C}^{\jmath}_{\bullet}(Y,\spin).
\end{equation*}
Consider its image in $\hat{C}_{\bullet}(Y,\spin)$. Adding
\begin{equation*}
\partial \hat{m}_3(x| p\cup \jmath p| D^2)=\hat{m}_3(x| p\cup \jmath p| S^1)+\hat{m}_2(\hat{m}_2(x|  p\cup \jmath p)| D^2)+\hat{m}_2(x\times p),
\end{equation*}
where again we assume $\hat{m}_2(D^2| p)=0$, we see that the image in $\hat{C}_{\bullet}(Y,\spin)$ of the triple Massey product is also represented by
\begin{equation*}
\hat{m}_2(z| S^1)+\hat{m}_2(\hat{m}_2(x|  p\cup \jmath p)| D^2)+\hat{m}_2(x| p)=\hat{m}_2(x| p)+\partial\hat{m}_2(z| D^2),
\end{equation*}
and the result follows.
\par
Finally, suppose $\langle \x, Q,Q^2\rangle=0$. Then, in the notation of the proof of the second bullet, we have that
\begin{equation}\label{fp}
(1+\jmath)[\hat{m}_2(t+\hat{m}_2(x| D^2)| p)]=\partial w.
\end{equation}
for some $w\in (1+\jmath)\hat{C}_{\bullet}(Y,\spin)$. As $\langle Q,Q^2,Q\rangle$ is zero at the chain level (see Example \ref{chainvan}), we have then that $\langle \x, Q,Q^2,Q\rangle$ is represented by
\begin{equation*}
\hat{m}_2(w| S^1)+\hat{m}_4(x|S^1|p\cup \jmath p|S^1)+\hat{m}_3(t|p\cup \jmath p|S^1).
\end{equation*}
We claim that under the inclusion into $\hat{C}_{\bullet}(Y,\spin)$ this maps to the same class as $\hat{m}(t+x\times D^2| p)$, so that the result follows.
We have
\begin{align*}
&\hat{m}_4(x|S^1| p|S^1)=\\
&=\partial \hat{m}_4(x|S^1|p|D^2)+\hat{m}_2(m_3(x|S^1|p)| D^2)+\hat{m}_3(\hat{m}_2(x| S^1)| p| D^2)+\hat{m}_2(x| \hat{m}_3(S^1|p|D^2)),
\end{align*}
where as above $\hat{m}_2(p,D^2)=0$, and
\begin{equation*}
\hat{m}_3(t|p| S^1)=\partial \hat{m}_3(t|p| D^2) +\hat{m}_2(\hat{m}_2(t| p)| D^2)+\hat{m}_3(\hat{m}_2(x| S^1)| p|D^2).
\end{equation*}
Notice that, regarding the last term in the first sum,  $m_3(S^1,p,D^2)$ is a collection of points, so that $1+\jmath$ of the last term represents a multiple of $Q^2 \cdot \x=0$. Hence the image $\iota_*(\langle \x, Q,Q^2,Q\rangle)\in \HMf_{\bullet}(Y,\spin)$ is represented by
\begin{equation*}
\hat{m}_2(w|S^1)+(1+\jmath)[\hat{m}_2(\hat{m}_3(x|S^1|p)| D^2)+\hat{m}_2(\hat{m}_2(t |p)| D^2)].
\end{equation*}
Notice that
\begin{equation*}
\hat{m}_2(w| S^1)=\hat{m}_2(w| \partial D^2)=\partial\hat{m}_2(w| D^2)+\hat{m}_2\left( [(1+\jmath)(\hat{m}_2(t+\hat{m}_2(x| D^2)| p)]| D^2\right)
\end{equation*}
and
\begin{align*}
\hat{m}_2(\hat{m}_3(x|S^1|p)|D^2)&=\hat{m}_2(\hat{m}_3(x|\partial D^2|p)|D^2)=\\
&=\hat{m}_2(\partial \hat{m}_3(x|D^2|p)+\hat{m}_2(\hat{m_2}(x| D^2)| p)|D^2)
\end{align*}
Hence $\iota$ of our cycle has the form (up to boundaries)
\begin{equation*}
\hat{m}_2((1+\jmath)\alpha| D^2)+(1+\jmath)\hat{m}_2(\alpha+\partial \beta|D^2)
\end{equation*}
where $\alpha=\hat{m}_2(t+\hat{m}_2(x| D^2)| p)$, or equivalently
\begin{equation*}
\hat{m}_2(\jmath\alpha| D^2\cup \jmath D^2) +\partial [(1+\jmath)\hat{m}_2(\beta| D^2)].
\end{equation*}
Now course $\hat{m}_2(\jmath\alpha| D^2\cup \jmath D^2)$ represents the same class as $\jmath\alpha$. Finally, the relation (\ref{fp}) implies that this is also the class of $\alpha$ itself, and the result follows.
\end{proof}
\vspace{0.3cm}
Let us point out some immediate consequences regarding correction terms. There is a plethora of numerical invariants of homology cobordism of rational homology spheres that one can in principle extract from the $\Rin$-module structure in $\Pin$-monopole Floer homology (these might go under the name \textit{generalized correction terms}). On the other hand, the simplest correction term, namely the Froysh\o v invariant $\delta(Y)$ arising from usual monopole Floer homology cannot be recovered from the $\Rin$-module structure (see \cite{Sto3}). We briefly recall the definition of the latter (which is $-h(Y)$ in the notation of \cite{KM}). We have that, given $Y$ a rational homology sphere,
\begin{equation*}
\HMb_{\bullet}(Y,\spin)\cong \ztwo[U^{-1},U]]
\end{equation*}
and that the map $i_*:\HMb_{\bullet}(Y,\spin)\rightarrow\HMt_{\bullet}(Y,\spin)$ is an isomorphism in degrees high enough and vanishes in degrees low enough. We then define
\begin{equation*}
\delta(Y,\spin)=\frac{1}{2}\mathrm{min}\{\mathrm{deg}(\x)|\x\in\mathrm{Im}(i_*)\}.
\end{equation*}
We then have the following characterization of $\delta(Y,\spin)$ purely in terms of the $\Ainf$-structure in $\Pin$-monopole Floer homology.
\begin{prop}\label{froy}
Let $\x$ be the bottom element of the $\gamma$-tower which is not in the image of $Q$. Then
\begin{equation*}
\delta(Y,\spin)=
\begin{cases}
\frac{1}{2}(\mathrm{deg}(\x)-2)\text{ if either }Q^2\cdot\x\neq 0\text{ or }\langle Q,Q^2,\x\rangle\neq 0\\
\frac{1}{2}\mathrm{deg}(\x)\textit{ otherwise.}
\end{cases}
\end{equation*}
\end{prop}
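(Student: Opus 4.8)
The plan is to push the whole computation through the Gysin triangle $\HSf_\bullet(Y,\spin)\xrightarrow{\cdot Q}\HSf_\bullet(Y,\spin)\xrightarrow{\iota}\HMf_\bullet(Y,\spin)\xrightarrow{\pi_*}\HSf_\bullet(Y,\spin)$ of Section \ref{DescrMassey} and to read $\delta$ off the $U$-module structure of $\HMf_\bullet(Y,\spin)$. Two observations set things up. First, since $\x$ lies in the $\gamma$-tower we have $Q\cdot\x=0$, so every Massey product in the statement is defined. Second, because $\x$ is not in the image of $Q$, exactness of the triangle at the middle term forces $\iota(\x)\neq 0$, a class of degree $\deg(\x)$. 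Everything will come down to deciding whether $U\cdot\iota(\x)$ vanishes.

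The core of the argument is the equivalence $U\cdot\iota(\x)=0\iff\big(Q^2\cdot\x=0\text{ and }\langle Q,Q^2,\x\rangle=0\big)$, which I would extract directly from Lemma \ref{Q2} and Theorem \ref{Massey}. By Lemma \ref{Q2}, $\pi_*(U\cdot\iota(\x))=Q^2\cdot\x$; hence if $Q^2\cdot\x\neq 0$ then $U\cdot\iota(\x)\neq 0$ immediately. If instead $Q^2\cdot\x=0$, then $U\cdot\iota(\x)$ lies in $\ker\pi_*=\mathrm{Im}\,\iota$, say $U\cdot\iota(\x)=\iota(\y)$; by the construction of $\Phi_3$ and the third clause of Theorem \ref{Massey} this $\y$ represents $\langle Q,Q^2,\x\rangle$ modulo $\mathrm{Im}\,Q$, and $\iota(\y)=0$ exactly when $\y\in\mathrm{Im}\,Q$, i.e.\ when $\langle Q,Q^2,\x\rangle=0$. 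This reproduces verbatim the two alternatives of the proposition.

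It then remains to convert the $U$-divisibility of $\iota(\x)$ into a value of $\delta$. I would first note that $\iota(\x)$ sits exactly at the transition of the $\gamma$-tower out of $\mathrm{Im}\,Q$: the classes strictly below $\x$ are in $\mathrm{Im}\,Q$, hence killed by $\iota$ (at the bar level $\iota$ annihilates $Q^2\cdot\V$), and correspondingly $U^2\cdot\iota(\x)=\iota(V\cdot\x)=0$ since $V\cdot\x$ is a $\gamma$-class below $\x$. Thus $\iota(\x)$ is the lowest surviving class of $\mathrm{Im}\,\iota$ in this range, and comparing the Gysin triangle with the long exact sequence relating $\HMt_\bullet$, $\HMf_\bullet$ and $\HMb_\bullet$ one should find that the bottom of the Froyshov tower $\mathrm{Im}(i_*\colon\HMb_\bullet\to\HMt_\bullet)$ occurs in degree $\deg(\x)$ when $U\cdot\iota(\x)=0$ and is pushed down by one $U$-step to degree $\deg(\x)-2$ when $U\cdot\iota(\x)\neq 0$. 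Halving and combining with the equivalence above gives the two cases.

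The hard part will be precisely this last translation. The subtlety is that $\delta$ is defined through $\HMt_\bullet$, whereas the Gysin triangle, Lemma \ref{Q2} and Theorem \ref{Massey} all live on $\HMf_\bullet$; bridging the two requires tracking the opposite directions of the Froyshov towers in the three flavors and controlling the reduced classes clustering near the top. The delicate claim to verify is that the two admissible positions of the tower differ by exactly one $U$-step, so that the algebraic dichotomy $U\cdot\iota(\x)=0$ versus $U\cdot\iota(\x)\neq 0$ detects the single unit of ambiguity in $\delta$ — the very information, as the surrounding discussion stresses, that is invisible to the $\Rin$-module structure and is supplied here by the $\Ainf$-operations $Q^2\cdot\x$ and $\langle Q,Q^2,\x\rangle$.
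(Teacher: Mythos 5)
Your argument is correct and is essentially the paper's: the two pillars — the equivalence $U\cdot\iota(\x)\neq 0\iff\bigl(Q^2\cdot\x\neq 0\text{ or }\langle Q,Q^2,\x\rangle\neq 0\bigr)$ extracted from Lemma \ref{Q2} together with the third clause of Theorem \ref{Massey}, and the bound $U^2\cdot\iota(\x)=\iota(V\cdot\x)=0$ coming from the minimality of $\x$ — are exactly the two steps in the paper's proof, so the bottom of the Froyshov tower sits at $\iota(\x)$ or $U\cdot\iota(\x)$ according to the stated dichotomy. The one place you diverge is the final translation you single out as ``the hard part'': this difficulty is an artifact of insisting on the \emph{from} flavors. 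The towers with a well-defined bottom element live in $\HSt_{\bullet}$ (they are the images of $i_*\colon\HSb_{\bullet}\to\HSt_{\bullet}$), and the paper runs the whole argument through the Gysin triangle for $\HSt_{\bullet}/\HMt_{\bullet}$; there $\mathrm{Im}(i_*\colon\HMb_{\bullet}\to\HMt_{\bullet})$ is literally the $U$-tower whose bottom degree is $2\delta$, and $\iota(\x)$ lands in it directly (this last containment — that $\iota(\x)$ is a tower element rather than a $U$-torsion class — is the only point needing a check, done by comparing the two triangles in sufficiently high degree). So no bridge between $\HMf_{\bullet}$ and $\HMt_{\bullet}$ is required, and the ``single $U$-step of ambiguity'' is immediate from $\iota(\x)\neq 0$ and $U^2\cdot\iota(\x)=0$. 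If you rephrase your last paragraph in the \emph{to} flavors, your proof closes up and coincides with the paper's.
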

\begin{proof}
Let $\x$ be the bottom of the element in the $\gamma$-tower which is not in the image of $Q$. By exactness of the Gysin sequence, $\iota(\x)\neq 0$. Furthermore, by comparing with the Gysin sequence in degrees high enough, wee see that $\iota(\x)$ is in fact an element of the $U$-tower of $\HMt_{\bullet}(Y,\spin)$. We claim that $U^2\iota(\x)=0$. In fact, we would have otherwise
\begin{equation*}
0\neq U^2\iota(\x)=V\cdot \iota(\x)=\iota(V\x),
\end{equation*}
so that $V\x$, which is an element in the $\gamma$-tower, is not in the image of $Q$, which contradicts our choice of $\x$. The bottom of the $U$-tower is therefore given by either $\iota(\x)$ or $U\cdot\iota(\x)$. In the latter case, by exactness we have that either $U\cdot\iota(\x)$ is in the image of $\iota$, or its image under $\pi_*$ is non-vanishing. These two cases correspond, thanks to Theorem \ref{Massey} and Lemma \ref{Q2}, to respectively $\langle Q,Q^2,\x\rangle\neq 0$ or $Q^2\cdot\x\neq 0$, and the result follows.
\end{proof}

In fact, a more natural correction term to study in $\Pin$-monopole (introduced in \cite{Sto3}) is 
\begin{equation*}
\delta'(Y,\spin)=\frac{1}{2}(\mathrm{min}\{\mathrm{deg}(\x)|\x\in\gamma\text{-tower}\, \x\not\in\mathrm{Im}Q\}-2)
\end{equation*}
which, by the discussion above, coincides with either $\delta(Y,\spin)$ or $\delta(Y,\spin)+1$. Furthermore, $\delta'(Y,\spin)$ reduces modulo $2$ to $-\mu(Y,\spin)$. While we have
\begin{equation*}
\delta(-Y,\spin)=-\delta(Y,\spin)
\end{equation*}
the effect of orientation reversal on $\delta'(Y,\spin)$ cannot be described purely in term of the module structure. On the other hand, it can be described in terms of Massey products as follows.
\begin{prop}
Let $\x$ be the bottom element of the $\gamma$-tower such that either $Q^2\cdot\x\neq 0$ or $\langle \x,Q^2,Q\rangle=0$, and set
\begin{equation*}
\delta''(Y)=\frac{1}{2}(\mathrm{deg}(\x)-2).
\end{equation*}
Then we have $\delta'(-Y)=-\delta''(Y)$.
\end{prop}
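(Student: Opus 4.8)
The plan is to transport the invariant $\delta'(-Y)$ to the manifold $Y$ by Poincar\'e duality and to recognize the resulting selection rule as the one defining $\delta''(Y)$. By definition $\delta'(-Y)=\frac{1}{2}(\mathrm{deg}(\x)-2)$, where $\x$ is the bottom element of the $\gamma$-tower of $-Y$ lying outside $\mathrm{Im}\,Q$. Running the argument of Proposition \ref{froy} on $-Y$, the class $\iota(\x)\in\HMt_{\bullet}(-Y,\spin)$ lies at the bottom of the $U$-tower, and $\delta'(-Y)=\frac{1}{2}\mathrm{deg}(U\cdot\iota(\x))$ (read as a degree, whether or not $U\cdot\iota(\x)$ vanishes). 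Thus I would first record that $\delta'(-Y)$ is precisely the degree sitting one $U$-step below the bottom of the $U$-tower of $\HMt_{\bullet}(-Y,\spin)$.

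Next I would invoke the Poincar\'e duality $\HSt^{k}(-Y,\spin)\cong\HSf_{-1-k}(Y,\spin)$ together with its monopole counterpart, which are compatible with the Gysin sequence (intertwining $\iota$ with $\pi_*$ and making $\cdot Q$ and $\cdot U$ self-adjoint for the duality pairing). Under this duality the bottom of the $U$-tower of $-Y$ is carried to the top of the corresponding $U$-structure of $\HMf_{\bullet}(Y,\spin)$, and---because orientation reversal exchanges the module with its opposite, exactly as in Theorem \ref{mainconn}---the triple products reverse order. Consequently the product $\langle Q,Q^{2},-\rangle$ that governs the bottom of the tower on $-Y$ (via Proposition \ref{froy}) is traded for the reversed product $\langle-,Q^{2},Q\rangle$ on $Y$, and the condition ``$\x\notin\mathrm{Im}\,Q$'' on $-Y$ is traded for a condition on $Y$ phrased through $Q^{2}\cdot$ and this reversed product. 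Using Lemma \ref{Q2} ($Q^{2}\cdot\x=\pi_*(U\cdot\iota(\x))$) and the third bullet of Theorem \ref{Massey} ($\langle\x,Q^{2},Q\rangle=\Phi_3(\x)$), the transported rule becomes exactly ``$Q^{2}\cdot\x\neq0$ or $\langle\x,Q^{2},Q\rangle=0$'', which selects the element defining $\delta''(Y)$. Matching degrees, the shift $-1$ in the duality together with the two $-2$ normalisations in the definitions of $\delta'$ and $\delta''$ combine to give $\delta'(-Y)=-\delta''(Y)$.

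The main obstacle is the middle step: establishing the precise dictionary by which Poincar\'e duality intertwines the two Gysin sequences. Two points need care. First, one must check that duality reverses the order of the triple Massey products (turning $\langle Q,Q^{2},\x\rangle$ into $\langle\x,Q^{2},Q\rangle$); this is the homological shadow of the passage to the opposite $\Ainf$-module, but it must be made quantitative at the chain level, in the spirit of the computations proving Theorem \ref{Massey}. Second, one must verify that the selection rule ``outside $\mathrm{Im}\,Q$'' dualises to the asymmetric rule ``$Q^{2}\cdot\x\neq0$ or $\langle\x,Q^{2},Q\rangle=0$''---in particular that the interchange of the strict inequality and the vanishing condition is the correct dual pairing between $\mathrm{coker}(\cdot Q)$ on $-Y$ and $\ker(\cdot Q)$ on $Y$. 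Once this dictionary is in place, the degree bookkeeping is routine and the identity follows.
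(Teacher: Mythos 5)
Your overall strategy --- reduce to $Y$ by Poincar\'e duality and recognise the transported selection rule as the one defining $\delta''$ --- matches the paper's in outline, but the step you yourself flag as ``the main obstacle'' is precisely the content of the proof, and your proposal does not supply it. You assert that duality ``reverses the order of the triple Massey products'' and that the condition $\x\notin\mathrm{Im}\,Q$ on $-Y$ dualises to ``$Q^2\cdot\x\neq0$ or $\langle\x,Q^2,Q\rangle=0$'' on $Y$, justifying this by analogy with the opposite-module formalism of Theorem \ref{mainconn}; but that theorem concerns connected sums, not orientation reversal (duality produces the \emph{dual} module, and the passage from there to a statement about reversed Massey products and the swap between a non-vanishing and a vanishing condition is exactly what needs proof). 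As written, the argument is a plan whose key lemma is still open.

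The paper closes this gap without ever dualising a Massey product: it works entirely on $Y$ and proves the concrete claim that if $\y\in\HSt_{\bullet}(Y)$ has $Q\cdot\y$ in the $\gamma$-tower, then $j_*(\y)\in\HSf_{\bullet}(Y)$ (which is killed by $Q$) has triple product with $Q$ and $Q^2$ that is $Q^2\cdot\V$-based. The proof is a diagram chase: exactness of the $i_*,j_*,p_*$ triangle shows $\y\notin\mathrm{Im}\,\pi_*$ while $j_*(\y)\in\mathrm{Im}\,\pi_*$; comparing with the analogous exact triangle for the ordinary Floer groups gives $j_*(\y)=\pi_*(\z)$ for some non-$U$-torsion $\z\in\HMf_{\bullet}(Y,\spin)$, and for such a $\z$ the class $\pi_*(U\cdot\z)$ is $Q^2\cdot\V$-based; Lemma \ref{Q2} and Theorem \ref{Massey} then convert this into the criterion involving $Q^2\cdot\x$ and $\langle\x,Q^2,Q\rangle$ appearing in the statement. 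Only after this is established does Poincar\'e duality enter, translating the bottom of the $\gamma$-tower of $\HSf_{\bullet}(-Y)$ not in $\mathrm{Im}\,Q$ into the relevant classes $\y$ of $\HSt_{\bullet}(Y)$. To complete your route you would have to prove your two ``points needing care,'' which amounts to re-deriving this lemma; it is cleaner to argue on $Y$ directly as the paper does.
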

\begin{proof}
The key observation here is that if $\y\in \HSt_{\bullet}(Y)$ has $Q\cdot\y$ in the $\gamma$-tower, then it image $j_*(\y)$ in $\HSf_{\bullet}(Y)$ (which is annihilated by $Q$) is such that $\langle j_*(\y),Q,Q^2\rangle$ is $Q^2\cdot\V$-based. To see this, we notice that exactness implies that $\y$ is not in the image of $\pi_*$, while $j_*(\y)$ is. Comparing this with the exact triangle relating the three flavors of usual monopole Floer homologies, we see by a simple diagram chasing then that $j_*(\y)$ is in the image of a non $U$-torsion element $\z\in \HMf_{\bullet}(Y,\spin)$. For such a $\z$, we have that $\pi_*(U\cdot\z)$ is $Q^2\cdot\V$-based (again by a simple diagram chasing), and the claim follows by Theorem \ref{Massey}. Finally, our main result follows from Poincar\'e duality. \end{proof}

\vspace{0.3cm}
\section{Examples}\label{examples}
In this section we discuss several examples in which the description of the Massey products in terms of the Gysin exact triangle is very explicit.
\\
\par
\textit{Manifolds of simple type $M_n$. }We introduce a special class of homology spheres which play a central role in $\Pin$-monopole Floer homology. Let us first introduce the relevant algebraic definitions. The definition is slightly different according to whether $n$ is even or odd.
\par
In the case $n=2k$, we define
\begin{equation*}
M_n=\ztwo[[V]]\oplus\ztwo[[V]]\langle4k-1\rangle \oplus\ztwo[[V]] \langle4k-2\rangle\oplus \ztwo[[V]]/(V^{k-1})\langle4k-3\rangle
\end{equation*}
where the action of $V$ respects the direct sum decomposition and the action of $Q$ maps one column to the one on the right and has maximal possible rank. This module can be depicted graphically as

\begin{center}
\begin{tikzpicture}
\matrix (m) [matrix of math nodes,row sep=0.1em,column sep=0.7em,minimum width=0.1em]
  {   \ztwo_{4k-1} & \ztwo &\cdot&& \cdots &\cdots& \ztwo & \ztwo & \cdot & \ztwo_0 & \ztwo &\ztwo&\cdots \\
        &  & \ztwo & &  &&&  & \ztwo &  &  & &\\
};
\path[-stealth]
(m-1-2) edge[bend right] node {}(m-2-3)
(m-1-1) edge[bend left] node {}(m-1-5)
(m-1-2) edge[bend left] node {}(m-1-6) 
(m-1-1) edge[bend right] node {}(m-1-2)

(m-1-7) edge[bend right] node {}(m-1-8)
(m-1-10) edge[bend right] node {}(m-1-11)
(m-1-11) edge[bend right] node {}(m-1-12)
(m-1-8) edge[bend right] node {}(m-2-9)
(m-1-8) edge[bend left] node {}(m-1-12)
(m-1-7) edge[bend left] node {}(m-1-11)
;
\draw[decoration={brace,mirror,raise=-4pt,amplitude=15pt},decorate]
  (-5,-0.5) -- node[below=6pt] {$k$ copies} (2,-0.5);

\end{tikzpicture}
\end{center}
where the arrows in the upper and lower rows represent respectively the actions of $V$ and $Q$, and the bottom row corresponds to the $\ztwo[[V]]/(V^{k-1})\langle4k-3\rangle$ summand. Let $1$ be the generator of the first summand and $qv^{-k}$ be the generator of the second summand. They lie in degrees respectively $0$ and $4k-1$, and are $\V$ and $Q\cdot\V$-based.
\par
In the case $n=2k+1$ for $k\geq 0$, we define
\begin{equation*}
M_n=\ztwo[[V]]\langle-2\rangle\oplus \ztwo[[V]]\langle 4k+1\rangle\oplus \ztwo[[V]]\langle 4k\rangle\oplus\ztwo[[V]]/(V^{k-1})\langle 4k-1\rangle.
\end{equation*}
where again the action of $V$ respects the direct sum decomposition and the action of $Q$ maps one column to the one on the right and has maximal possible rank. More visually,
\begin{center}
\begin{tikzpicture}
\matrix (m) [matrix of math nodes,row sep=0.1em,column sep=0.7em,minimum width=0.1em]
  {   \ztwo_{4k+1} & \ztwo &\cdot&& \cdots &\cdots& \ztwo & \ztwo & \cdot &\cdot & \ztwo &\ztwo_0&\cdot&\ztwo&\ztwo&\ztwo\cdots \\
        &  & \ztwo & &  &&&  & \ztwo &  &  & &\\
};
\path[-stealth]
(m-1-2) edge[bend right] node {}(m-2-3)
(m-1-1) edge[bend left] node {}(m-1-5)
(m-1-2) edge[bend left] node {}(m-1-6) 
(m-1-1) edge[bend right] node {}(m-1-2)

(m-1-7) edge[bend right] node {}(m-1-8)
(m-1-11) edge[bend right] node {}(m-1-12)
(m-1-8) edge[bend right] node {}(m-2-9)
(m-1-8) edge[bend left] node {}(m-1-12)
(m-1-7) edge[bend left] node {}(m-1-11)

(m-1-14) edge[bend right] node {}(m-1-15)
(m-1-15) edge[bend right] node {}(m-1-16)

(m-1-11) edge[bend left] node {}(m-1-15)
(m-1-12) edge[bend left] node {}(m-1-16)
;
\draw[decoration={brace,mirror,raise=-4pt,amplitude=15pt},decorate]
  (-6.5,-0.5) -- node[below=6pt] {$k$ copies} (0.5,-0.5);

\end{tikzpicture}
\end{center}

We denote the generator of the first summand by $v$ and the generator of the second summand by $qv^{-k}$. They lie in degrees $-2$ and $4k+1$ respectively, and are $\V$ and $Q\cdot \V$-based.
\\
\par
The following is the key definition of this section (an analogous concept of manifolds of projective type was introduced in \cite{Sto}).
\begin{defn}\label{simple}
A homology sphere $Y$  has $\Pin$-\textit{simple type} $M_n$ if there is a direct sum decomposition as $\Rin$-modules
\begin{equation*}
\HSf_{\bullet}(Y)=M_n\langle-1\rangle \oplus J
\end{equation*}
with $p_*(J)=0$ and no non-trivial Massey products between the two summands.
\end{defn}
From the Gysin exact sequence it readily follows that the part of $\HMf_{\bullet}(Y)$ interacting with $M_n\langle-1\rangle$ has the form
\begin{equation*}
\ztwo[[U]]\langle-1\rangle\oplus \ztwo[[U]]/U^n\langle 2n-2\rangle
\end{equation*}
In particular, if $Y$ has simple type $M_n$, $\delta(Y)=0$ $\alpha(Y)=\beta(Y)=n$ and 
\begin{equation*}
\gamma(Y)=\begin{cases}
0 \text{ if $n$ is even}\\
1 \text{ otherwise.}
\end{cases}
\end{equation*}
In particular, the Rokhlin invariant of $Y$ is simply the parity of $n$.
\\
\par
\textit{Surgery on $L$-space knots. }There are several examples of manifolds with simple type $M_n$ obtained by surgery on a knot in $S^3$ (this should be compared, in the Heegaard Floer setting, to the results in \cite{HHL}). In this subsection, we prefer to work for notational reasons with the \textit{to} homologies $\HSt_{\bullet}$. Let us introduce a notation for the standard $U$ and $V$-towers
\begin{align*}
\U^+&=\ztwo[U^{-1},U]]/\ztwo[[U]]\\
\V^+&=\ztwo[V^{-1},V]]/\ztwo[[V]],
\end{align*}
where the bottom element lies in degree zero. Recall that a knot $K\subset S^3$ is called an \textit{$L$-space knot} if for large enough $r>0$, the manifold $S^3_r(K)$ is an $L$-space (i.e. it has vanishing reduced monopole Floer homology $\mathit{HM}$). Given an $L$-space knot, we have
\begin{equation}\label{Lspaceknotn}
\HMt_{\bullet}(S^3_0(K),\spin_0)=\U^+\langle-1\rangle \oplus \U^+\langle-2n\rangle
\end{equation}
for some $n\geq0$. Here $\spin_0$ denotes the unique torsion spin$^c$ structure. We say in this case that $K$ has \textit{type} $n$. The analogous fact in Heegaard Floer homology is well-known \cite{OS2}, and implies our claim via the isomorphism with monopole Floer homology (see \cite{HFHM1}, \cite{CGH1}, and subsequent papers). Our key source of examples is the following.

\begin{prop}\label{manysimple}
Let $K$ be an $L$-space knot of type $n$. Then the manifold $-S^3_{-1}(K)$ has simple type $M_n$, where $-Y$ denotes $Y$ with the orientation reversed.
\end{prop}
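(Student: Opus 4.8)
The plan is to reduce the statement to a computation in ordinary monopole Floer homology and then transport it across the Gysin exact triangle. \textbf{Step 1 (ordinary Floer homology).} First I would compute $\HMf_{\bullet}(-S^3_{-1}(K))$ as a module over $\ztwo[[U]]$. Because $K$ is an $L$-space knot, its full knot complex is a staircase, so every surgery computation is completely explicit and governed by the single integer $n$. Concretely, the monopole surgery exact triangle for the triad $(S^3,\,S^3_0(K),\,S^3_{-1}(K))$---whose slopes $\infty,0,-1$ are pairwise distance one---together with the type $n$ description \eqref{Lspaceknotn} and orientation reversal (equivalently, $-S^3_{-1}(K)=S^3_{+1}(m(K))$ for the mirror $m(K)$, or Poincar\'e duality) pins down
\[
\HMf_{\bullet}(-S^3_{-1}(K))\cong \ztwo[[U]]\langle-1\rangle\oplus \ztwo[[U]]/U^n\langle 2n-2\rangle .
\]
The analogous Heegaard Floer statement is standard and transfers through the isomorphism with monopole Floer homology. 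The crucial feature is the \emph{minimality} guaranteed by the $L$-space condition: there is exactly one free tower and a single cyclic torsion summand, with no further reduced part.

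\textbf{Step 2 (Gysin lift).} I would then feed this into the exact triangle of $\Rin$-modules
\[
\HSf_{\bullet}(-S^3_{-1}(K))\xrightarrow{\ \cdot Q\ }\HSf_{\bullet}(-S^3_{-1}(K))\xrightarrow{\ \iota\ }\HMf_{\bullet}(-S^3_{-1}(K))\xrightarrow{\ \pi_*\ }\HSf_{\bullet}(-S^3_{-1}(K)),
\]
recalling that on $\HMf_{\bullet}$ the element $Q$ acts as $0$ and $V$ acts as $U^2$. Exactness gives $\ker(\cdot Q)=\operatorname{im}\pi_*$ and $\operatorname{im}(\cdot Q)=\ker\iota$, so the entire $\Rin$-module structure of $\HSf_{\bullet}$---including the $Q$-action and its rank in every degree---is determined by the known $U$-module above together with the connecting maps. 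Viewing $\HMf_{\bullet}$ as a module over $\ztwo[[V]]=\ztwo[[U^2]]$ and running the triangle degree by degree reconstructs $\HSf_{\bullet}(-S^3_{-1}(K))$ as precisely the $\Rin$-module $M_n\langle-1\rangle$ of Definition \ref{simple}. The one genuinely delicate point is the parity bookkeeping: the decomposition $\ztwo[[U]]/U^n=\ztwo[[V]]/V^{\lceil n/2\rceil}\oplus \ztwo[[V]]/V^{\lfloor n/2\rfloor}$ behaves differently for $n$ even and $n$ odd, and the interaction of these torsion pieces with the three towers $\V,\,Q\cdot\V,\,Q^2\cdot\V$---for instance, in the case $n=2k$ the two length-$k$ torsion summands of $\HMf_{\bullet}$ must reassemble into the single reduced summand $\ztwo[[V]]/(V^{k-1})$ of $M_n$---is exactly what produces the two different descriptions of $M_n$, and correspondingly the values $\gamma=0$ versus $\gamma=1$.

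\textbf{Step 3 ($J$ and Massey products).} Since $\HMf_{\bullet}(-S^3_{-1}(K))$ consists of a single tower and a single cyclic summand, the Gysin reconstruction leaves no room for any summand beyond $M_n\langle-1\rangle$; hence $J=0$, and the condition in Definition \ref{simple} that there be no non-trivial Massey products between $M_n\langle-1\rangle$ and $J$ holds vacuously. In particular the requirement that the $Q$-action have maximal rank is already forced by the rank count of Step 2 through exactness, with no higher-order input needed.

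\textbf{Main obstacle.} I expect Step 1 to be the substantive part: extracting $\HMf_{\bullet}(-S^3_{-1}(K))$---with the correct absolute gradings and, crucially, with no spurious reduced summands---from the single integer $n$. Once this monopole-theoretic input is secured, Steps 2 and 3 are essentially mechanical, the only real care being the $n$ even versus $n$ odd case split in the Gysin bookkeeping.
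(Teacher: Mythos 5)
Your Step 2 contains the essential gap. The Gysin triangle is a long exact sequence in which $\HSf_{\bullet}$ occupies three consecutive positions,
\begin{equation*}
\cdots\longrightarrow\HSf_{d+1}\stackrel{\cdot Q}{\longrightarrow}\HSf_{d}\stackrel{\iota}{\longrightarrow}\HMf_{d}\stackrel{\pi_*}{\longrightarrow}\HSf_{d}\stackrel{\cdot Q}{\longrightarrow}\HSf_{d-1}\longrightarrow\cdots,
\end{equation*}
so knowing $\HMf_{\bullet}$ yields only the single relation $\mathrm{dim}\,\HMf_d=\mathrm{dim}\left(\HSf_d/Q\HSf_{d+1}\right)+\mathrm{dim}\,\mathrm{ker}(Q)_d$. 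The maps $\iota$ and $\pi_*$ are not ``known'': they are part of what must be determined, and in particular nothing in the ordinary $\ztwo[[U]]$-module tells you how the $n$ classes of the torsion summand $\ztwo[[U]]/U^n$ distribute between $\mathrm{ker}\,\iota=\mathrm{Im}\,Q$ and $\mathrm{Im}\,\pi_*=\mathrm{ker}\,Q$. This is not a parity bookkeeping issue: $\HSf_{\bullet}$ carries strictly more information than $\HMf_{\bullet}$ (it remembers the chain-level involution $\jmath$), which is exactly why $\alpha,\beta,\gamma$ are not functions of the $\ztwo[[U]]$-module $\HMf_{\bullet}$. The one situation where ``running the triangle degree by degree'' does pin down the answer is when $\HMf_{\bullet}$ has no $U$-torsion, and that is the only place the paper uses this move: to deduce $\HSt_{\bullet}(S^3_0(K))=\Rin^+\langle-1\rangle\oplus\Rin^+\langle-2n\rangle$ from (\ref{Lspaceknotn}), where $\HMt_{\bullet}(S^3_0(K))$ is a sum of two towers.

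The paper supplies the missing input by working equivariantly from the start: it runs the surgery exact triangle in $\Pin$-monopole Floer homology for the triads $(S^3_{1/q}(K),S^3_0(K),S^3_{1/(q+1)}(K))$, and the decisive new ingredient is Lemma \ref{key}, that the composite $\check{B}_q\circ\check{A}_q$ equals multiplication by $Q$ (whereas the analogous composite for $\HMt$ vanishes, by \cite{KMOS}). Combined with the computation of $\HSb_{\bullet}(S^3_0(K))$ and $\HSt_{\bullet}(S^3_0(K))$, this constrains the maps in the triangle enough to determine $\HSt_{\bullet}(S^3_{-1}(K))$, with the $\mathrm{Arf}=0$ (i.e.\ $n$ even) case requiring the further arguments of \cite{Lin6}; Poincar\'e duality then converts the answer to $-S^3_{-1}(K)$. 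Your Steps 1 and 3 are reasonable as far as they go --- in particular $J=0$ is indeed forced once the module structure is known, since $\HSf_{\bullet}$ vanishes in sufficiently high degrees --- but without replacing Step 2 by an equivariant argument of this kind the proof does not go through.
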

\begin{example}
Recall that all positive torus knots have a positive lens space, hence $L$-space, surgery. In particular, the torus knot $T(2,4n-1)$ is an $L$-space knot of type $n$, and furthermore $S^3_{-1}(T(2,4n-1))$ is the Seifert space $\Sigma(2,4n-1, 8n-1)$. Therefore, $-\Sigma(2,4n-1, 8n-1)$ has simple type $M_n$.
\end{example}

Notice that the Arf invariant of an $L$-space knot $K$ is the same as the parity of its type $n$. The proof of Proposition \ref{manysimple} is simpler in the case of $\mathrm{Arf}=1$, and essentially follows from the computations involving the surgery exact triangle in \cite{Lin2}. The proof of the $\mathrm{Arf}=0$ case is more subtle, and follows from the content of the unpublished note \cite{Lin6}. We here review the main ideas involved for both cases. We denote by $\check{S}_{1/q}$ the group $\HSt_{\bullet}(S^3_{1/q}(K))$. For each $q$, the main result of \cite{Lin2} implies that there is an exact triangle
\begin{center}
\begin{tikzpicture}
\matrix (m) [matrix of math nodes,row sep=2em,column sep=1.5em,minimum width=2em]
  {
  \check{S}_{1/q} && \check{S}_{0}\\
  &\check{S}_{1/(q+1)} &\\};
  \path[-stealth]
  (m-1-1) edge node [above]{$\check{A}_q$} (m-1-3)
   (m-2-2) edge node {}(m-1-1)
  (m-1-3) edge node [right]{$\check{B}_{q+1}$} (m-2-2)  
  ;
\end{tikzpicture}
\end{center}
where the maps $\check{A}^s_q$ and $\check{B}^s_q$ are those induced by the spin cobordisms induced by handle attachments. The key observation is the following.
\begin{lemma}\label{key}
The composite
\begin{equation}\label{composite}
\check{B}_{q}\circ \check{A}_q:\check{S}_{1/q} \rightarrow \check{S}_{1/q} 
\end{equation}
is given by multiplication by $Q$. 
\end{lemma}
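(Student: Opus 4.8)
The plan is to realize the composite $\check{B}_q\circ\check{A}_q$ as the map induced by a single spin cobordism and then to identify that cobordism explicitly. By functoriality of the cobordism maps in $\Pin$-monopole Floer homology, and since $\check{A}_q$ and $\check{B}_q$ are induced by the spin cobordisms $W_A:S^3_{1/q}(K)\to S^3_0(K)$ and $W_B:S^3_0(K)\to S^3_{1/q}(K)$ coming from single $2$-handle attachments, we have
\[
\check{B}_q\circ\check{A}_q=\HSt(W),\qquad W=W_B\cup_{S^3_0(K)}W_A,
\]
a spin cobordism from $S^3_{1/q}(K)$ to itself. First I would analyze the topology of $W$ by Kirby calculus: the two $2$-handles of $W_A$ and $W_B$ are attached along a pair of curves which, after a handle slide, can be arranged so that one of them bounds a disk completing the core of the other into an embedded $2$-sphere $\Sigma$ of self-intersection $0$. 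This identifies $W$ with the product cobordism $[0,1]\times S^3_{1/q}(K)$ with a copy of $S^2\times S^2$ summed in along $\Sigma$ (the spin condition forces the even, hence $S^2\times S^2$, summand rather than $\mathbb{C}P^2\#\CPbar$). In particular $\chi(W)=2$ and $\sigma(W)=0$, so the grading shift of $\HSt(W)$ is $(-2\chi-3\sigma)/4=-1$, exactly matching multiplication by $Q$.

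Next I would track the $\Pin$-symmetry through this picture. The involution $\jmath$ acts on the reducible locus lying over the distinguished sphere $\Sigma$ as the antipodal map, so this locus is a copy of the critical submanifold $C_{-1}\cong S^2$ governing the computation of the $Q$-action on $\HSf_{\bullet}(S^3)$ in Section \ref{review}. Neck-stretching along the separating $S^3$ produced by the connected sum then factors $\HSt(W)$ through the module multiplication $\hat{m}_2(-\,|\,-)$ by a chain supported on $C_{-1}$. Because $\Sigma$ carries the antipodal $\jmath$-action, the chain that is swept out has the form $(1+\jmath)D^2$, whose relevant boundary term is the equator $S^1=\partial D^2$; as recalled in Section \ref{DescrMassey}, $S^1$ is precisely a cycle representing $Q\in\Rin=\HSf_{\bullet}(S^3)$. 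Reading this off gives $\HSt(W)=\,\cdot\,Q$, as claimed.

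The main obstacle will be making the neck-stretching identification precise at the chain level: one must show that the family of reducibles over $\Sigma$ contributes exactly the equatorial $Q$-chain, and that the irreducible and higher-dimensional strata do not alter the resulting class. I expect this to follow from the degree count that already fixes $\deg(\check{B}_q\circ\check{A}_q)=-1$, together with the fact, used repeatedly in Section \ref{DescrMassey}, that $3$-cycles in the critical $S^2$s vanish at the chain level, so that only the degree-$1$ equatorial contribution survives. The case distinction $\mathrm{Arf}=0,1$ referred to after Proposition \ref{manysimple} enters only in controlling these auxiliary strata; in both cases the leading term is the one identified above.
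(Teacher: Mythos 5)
Your topological reduction is essentially the paper's: the paper also composes the two $2$-handle cobordisms into a single spin cobordism $W$ and uses Kirby calculus (there, trading the $2$-handle along the $0$-framed meridian for a dotted $1$-handle, which then cancels) to identify $W$ with the product $[0,1]\times Y_{1/q}(K)$ surgered along a loop, i.e.\ with an $S^2\times S^2$ summand spliced in. Your handle-slide phrasing and the observation that the spin condition rules out the odd ($\mathbb{C}P^2\hash\CPbar$) case are fine cosmetic variants of this.

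The gap is in the second half. At this point the paper simply invokes the computation from the proof of Theorem $5$ of \cite{Lin2} that the cobordism $S^2\times S^2$ minus two balls induces multiplication by $Q$ on $\HSt_{\bullet}$. You instead try to derive this from (i) the degree count $\deg = -1$ and (ii) the vanishing of $3$-cycles in the critical $S^2$'s, concluding that ``only the degree-$1$ equatorial contribution survives.'' Neither ingredient, nor their combination, rules out the map being \emph{zero}: a degree $-1$ endomorphism of $\Rin$ over $\ztwo$ is either $\cdot Q$ or $0$, and deciding between the two requires an actual count of reducible solutions on the $S^2\times S^2$ piece. That this is a genuine issue is shown by the non-equivariant theory, where the same composite cobordism induces the zero map on $\HMf_{\bullet}$ (\cite{KMOS}) even though the same degree count applies; the nontriviality is special to the $\Pin$-equivariant setting and is exactly the content of the cited computation. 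Your neck-stretching picture identifying the reducible locus over $\Sigma$ with $C_{-1}$ and the $Q$-chain with the equator is the right heuristic, but as written it asserts rather than proves that the relevant coefficient is $1$ modulo $2$. Separately, the $\mathrm{Arf}=0,1$ dichotomy you mention plays no role in this lemma: it only enters afterwards, in using the lemma to pin down the maps in the surgery triangle.
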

On the other hand, recall that the analogous map in the usual setting of monopole Floer homology vanishes, see \cite{KMOS}.
\begin{proof}
The composition of the two cobordisms is described by the Kirby diagram in Figure \ref{composition2}. The cobordism from $Y_{1/q}(K)$ to $Y_0(K)$ is given by a two handle attachment along the knot $K'$, while the following one from $Y_0(K)$ to $Y_{1/q}(K)$ is given by attaching another two handle along a zero framed meridian of $K'$. If we trade this second handle for a $1$-handle (i.e. adding a dot in the notation of \cite{GS}), we obtain a pair of canceling $1$ and $2$-handles. In particular, the composite cobordism is obtained from the product one $[0,1]\times Y_{1/q}(K)$ by removing a neighborhood $S^1\times D^3$ of a loop and replacing it by $S^2\times D^2$. The result then readily follows from the fact that the map induced by $S^2\times S^2$ with two ball removed induces multiplication by $Q$ (see the proof of Theorem $5$ in \cite{Lin2}).
\end{proof}

\begin{figure}
  \centering
\def\svgwidth{0.9\textwidth}
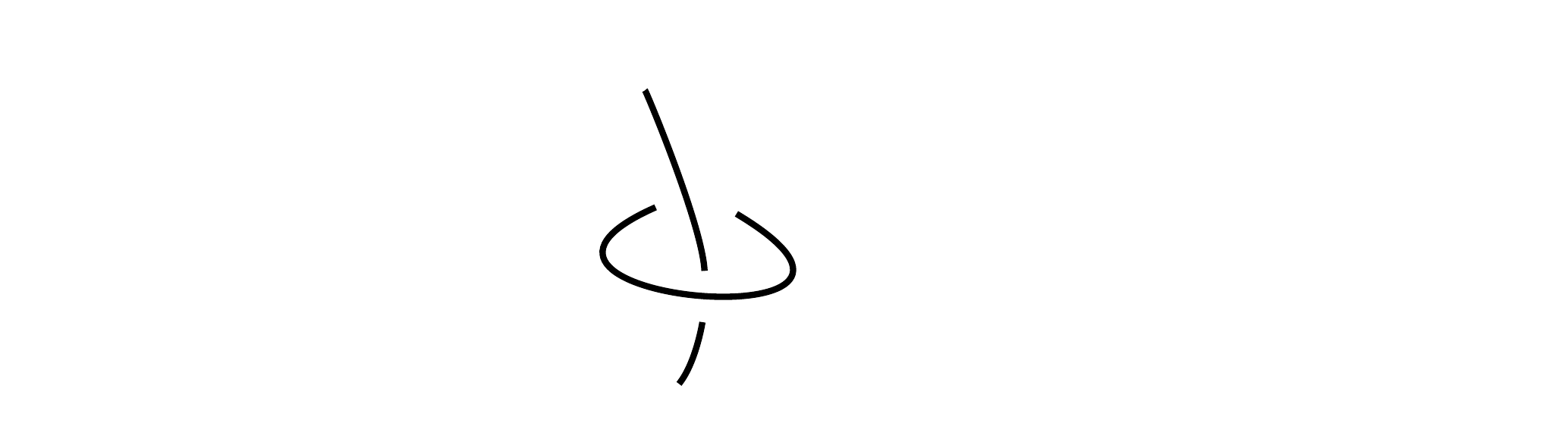
\caption{A handlebody description of the composite of the cobordisms defining the map $\check{B}_{q-1}\circ \check{A}_q$. This link is inside $Y_{1/q}$.}
    \label{composition2}
\end{figure}  As $K$ has Arf invariant zero, we have from \cite{Lin2}, or more generally \cite{Lin5}, that
\begin{equation}
\bar{S}_0= \Rint\langle-1\rangle\oplus\Rint,
\end{equation}
where we fix the identification so that so that $\bar{A}^s_0$ is an isomorphism onto the first summand, so that the triangle looks schematically like
\begin{center}
\begin{tikzpicture}
\matrix (m) [matrix of math nodes,row sep=0.5em,column sep=0.5em,minimum width=2em]
  {

  \ztwo &&&&& \cdot& \ztwo&&&&&\ztwo\\
  \ztwo &&&&& \ztwo&\ztwo&&&&&\ztwo \\
  \ztwo &&&&& \ztwo&\ztwo&&&&& \ztwo\\
  \cdot &&&&& \ztwo&\cdot \\
  };
  \path[-stealth]
(m-1-1) edge node {}(m-2-6) 
(m-2-1) edge node {}(m-3-6) 
(m-3-1) edge node {}(m-4-6) 

(m-1-7) edge node {}(m-1-12)
(m-2-7) edge node {}(m-2-12)
(m-3-7) edge node {}(m-3-12)

  ;
    \draw[dotted]  (-1.5,-1.5) -- (-1.5,1.5);
        \draw[dotted]  (1.5,-1.5) -- (1.5,1.5);
\end{tikzpicture}
\end{center}
repeated in both directions four-periodically. Here the three columns denote $\bar{A}^s_{\infty},\bar{A}^s_0$ and $\bar{A}^s_1$ respectively. Lemma \ref{key} implies that for $q=-1$ (or, in general, $q$ odd) the triangle looks like
\begin{center}
\begin{tikzpicture}
\matrix (m) [matrix of math nodes,row sep=0.5em,column sep=0.5em,minimum width=2em]
  {

  \ztwo &&&&& \cdot& \ztwo&&&&&\ztwo\\
  \ztwo &&&&& \ztwo&\ztwo&&&&&\ztwo \\
  \ztwo &&&&& \ztwo&\ztwo&&&&& \ztwo\\
  \cdot &&&&& \ztwo&\cdot \\
  };
  \path[-stealth]
(m-1-1) edge node {}(m-2-6) 
(m-2-1) edge node {}(m-3-6) 
(m-3-1) edge node {}(m-4-6)
(m-1-1) edge node {}(m-2-7) 
(m-2-1) edge node {}(m-3-7) 

(m-2-6) edge[bend right] node {}(m-2-12)
(m-3-6) edge[bend right] node[bend right] {}(m-3-12)

(m-1-7) edge node {}(m-1-12)
(m-2-7) edge node {}(m-2-12)
(m-3-7) edge node {}(m-3-12)

  ;
    \draw[dotted]  (-1.5,-1.5) -- (-1.5,1.5);
        \draw[dotted]  (1.5,-1.5) -- (1.5,1.5);
\end{tikzpicture}
\end{center}
repeated in both directions four-periodically.
\\
\par
The final observation is that the isomorphism (\ref{Lspaceknotn}) implies via the Gysin exact sequence that, setting
\begin{equation*}
\Rin^+=\HSt_{\bullet}(S^3)=\V^+\oplus Q\cdot\V^+\oplus Q^2\V^+,
\end{equation*}
we have
\begin{equation*}
\HSt_{\bullet}(S^3_0(K))=\Rin^+\langle-1\rangle\oplus \Rin^+\langle-2n\rangle.
\end{equation*}
With this in hand, the proof of Proposition \ref{manysimple} in the Arf $0$ case follows as in the next example.
\begin{example}
Consider the case of the torus knot $K=T(2,7)$, which has is an $L$-space knot of type $2$. We have then the identification of the triangle with $q=-1$ with
\begin{center}
\begin{tikzpicture}
\matrix (m) [matrix of math nodes,row sep=0.5em,column sep=0.5em,minimum width=2em]
  {

  \ztwo &&&&& \cdot& \ztwo&&&&&\ztwo\\
  \ztwo &&&&& \ztwo&\ztwo&&&&&\ztwo \\
  \ztwo &&&&& \ztwo&\ztwo&&&&& \ztwo\\
  \cdot &\ztwo&&&& \ztwo&\cdot \\
  \ztwo&&&&&& \ztwo\\
  \ztwo&&&&&&\ztwo\\
  &&&&&&\ztwo\\
  };
  \path[-stealth]
(m-1-1) edge node {}(m-2-6) 
(m-2-1) edge node {}(m-3-6) 
(m-3-1) edge node {}(m-4-6)
(m-1-1) edge node {}(m-2-7) 
(m-2-1) edge node {}(m-3-7) 

(m-4-2) edge node {}(m-5-7) 
(m-4-2) edge[dotted, bend left] node {}(m-5-1) 
(m-5-1) edge node {}(m-6-7) 
(m-6-1) edge node {}(m-7-7)

  ;
    \draw[dotted]  (-1,-1.5) -- (-1,1.5);
        \draw[dotted]  (1.7,-1.5) -- (1.7,1.5);
                \draw[dotted]  (-3,0.4) -- (3,0.4);
\end{tikzpicture}
\end{center}
where for simplicity we have only depicted $\check{A}_0$, and we have omitted summands in $\HSt_{\bullet}(S^3_{-1}(K))$ arising from the not torsion spin$^c$ structures on $S^3_0(K)$, as they do not have interesting Massey products due to naturality. Here the dotted arrow denotes a $Q$ action, and the horizontal dotted line represents grading zero.. The result then follows from Poincar\'e duality.
\end{example}

The case in which $K$ has Arf invariant one is significantly simple. Indeed, in this case we have
\begin{equation*}
\HSb_{\bullet}(S^3_0(K))=(\V\otimes \ztwo[Q]/Q^2)\oplus(\V\otimes \ztwo[Q]/Q^2)\langle-2\rangle.
\end{equation*}
so that the maps on $\HSb_{\bullet}$ in the exact triangle are uniquely determined, see \cite{Lin2} for the details.
\begin{remark}
In fact, we see that $\HSb_{\bullet}(S^3_0(K))$ has many interesting Massey products itself. The Gysin exact triangle looks in this case like
\begin{center}
\begin{tikzpicture}
\matrix (m) [matrix of math nodes,row sep=1em,column sep=0.5em,minimum width=2em]
  {

  \ztwo &&&&& \cdot& \ztwo&&&&&\ztwo\\
  \ztwo &&&&& \ztwo&\cdot&&&&&\ztwo \\
  \ztwo &&&&& \cdot&\ztwo&&&&& \ztwo\\
  \ztwo &&&&& \ztwo&\cdot&&&&&\ztwo \\
  };
  \path[-stealth]
(m-1-1) edge node {}(m-1-7) 
(m-3-1) edge node {}(m-3-7) 

(m-2-6) edge node {}(m-2-12)
(m-4-6) edge node {}(m-4-12)	

(m-1-1) edge[dotted, bend right] node {}(m-2-1) 
(m-3-1) edge[dotted, bend right] node {}(m-4-1) 
(m-1-12) edge[dotted, bend left] node {}(m-2-12) 
(m-3-12) edge[dotted, bend left] node {}(m-4-12) 
(m-2-6) edge[dotted, bend right] node {}(m-4-6) 
(m-1-7) edge[dotted, bend left] node {}(m-3-7)

  ;
    \draw[dotted]  (-1.5,-1.7) -- (-1.5,1.7);
        \draw[dotted]  (1.5,-1.7) -- (1.5,1.7);
\end{tikzpicture}
\end{center}
where the solid arrows depict the maps $\iota_*$ and $\pi_*$, while the dotted arrows denote the actions of $Q$ and $U$. In light of Theorem \ref{Massey}, we see that there are many non-trivial Massey products of the form $\langle \cdot, Q,Q^2\rangle$ and $\langle\cdot, Q^2,Q\rangle$.
\end{remark}

\vspace{0.3cm}
\textit{Manifolds of simple type $-M_n$. }Of course, from the view point of the Massey products we are interested in, manifolds of simple type $M_n$ are not particularly interesting are there no interesting elements annihilated by elements in $\Rin$. On the other hand, the manifolds obtained by orientation reversal, to which we refer as manifold with simple type $-M_n$, have a richer structure. For simplicity we will focus on the case in which $n$ is even. Let us start from $n=2$. The $\Rin$-module structure for a manifold of type $-M_2$ is given by
\begin{equation*}
\ztwo[[V]]\langle-4\rangle\oplus\ztwo[[V]]\langle -5\rangle\oplus\ztwo[[V]]\langle-2\rangle\oplus\ztwo_0
\end{equation*}
where the action of $Q$ is injective from the first tower to the second, and from the second to the third. Graphically,
\begin{center}
\begin{tikzpicture}
\matrix (m) [matrix of math nodes,row sep=0.1em,column sep=0.7em,minimum width=0.1em]
  {   \ztwo_0 &\cdot  &\ztwo_{-2}&\cdot& \ztwo &  \ztwo &\ztwo&\cdots \\};

\path[-stealth]
(m-1-3) edge[bend left] node {}(m-1-7)
(m-1-5) edge[bend right] node {}(m-1-6)
(m-1-6) edge[bend right] node {}(m-1-7)
;
\end{tikzpicture}
\end{center}
As a notation, we will denote the generator in degree zero by $\z$, while the generators in degree $-2$ and $-4$ by $q^2$ and $v$ respectively. For a general manifold $Y$ of simple type $-M_2$, we will have a decomposition $\HSb_{\bullet}(Y)=-M_2\oplus C$ as $\Rin$-modules, where furthermore $C$ there are no non-trivial Massey-products between the summands.
The key observations is the following.
\begin{lemma}\label{-Mn}
We have the triple Massey products $\langle \z, Q, Q^2\rangle=q^2$ and $\langle V, \z, Q\rangle=v$.
\end{lemma}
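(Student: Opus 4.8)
The plan is to reduce both products to the auxiliary operations $\Phi_1,\Phi_2$ attached to the Gysin exact triangle, which Theorem \ref{Massey} identifies with the Massey products in question. Since $\z$ generates the $\ztwo_0$ summand we have $Q\cdot\z=V\cdot\z=0$ and hence $Q^2\cdot\z=0$, so all the products below are defined. Theorem \ref{Massey} gives directly $\langle\z,Q,Q^2\rangle=\Phi_2(\z)$. For the second product I would use that $\Rin$ is commutative and we work over $\ztwo$, so the reversal symmetry of triple Massey products yields $\langle V,\z,Q\rangle=\langle Q,\z,V\rangle=\Phi_1(\z)$. Thus it suffices to prove $\Phi_2(\z)=q^2$ and $\Phi_1(\z)=v$, which are computations internal to the Gysin sequence of $Y$. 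Because there are no nontrivial Massey products between $-M_2$ and $C$ and the triangle is natural, I may localize everything to the $-M_2$ summand and its image in $\HMf_{\bullet}(Y)$.

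First I would reconstruct this part of $\HMf_{\bullet}(Y)$ from the module $-M_2$. Computing $\ker(\cdot Q)$ and $\mathrm{coker}(\cdot Q)$ on $-M_2$ and feeding them into the triangle produces, degreewise, the short exact sequence $0\to\mathrm{coker}(\cdot Q)\xrightarrow{\iota}\HMf_{\bullet}(Y)\xrightarrow{\pi_*}\ker(\cdot Q)\to0$. Since $\delta(Y)=0$ for a manifold of type $-M_2$, the $U$-tower $\ztwo[[U]]\subset\HMf_{\bullet}(Y)$ has its top generator $g_0$ in degree $0$; its degree $-4$ element must be the unique nonzero class $g_{-4}=\iota(v)$ (here $\HMf_{-4}(Y)=\iota(\langle v\rangle)$ is one-dimensional), while the reduced, hence $U$-torsion, classes sit in degrees $0$ and $-2$, the degree $-2$ one being $\iota(q^2)$. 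Indeed $V\cdot\iota(q^2)=\iota(V q^2)=0$ because $V q^2=Q w\in\mathrm{Im}(\cdot Q)$, so $\iota(q^2)$ is $U$-torsion, and $\ker(\pi_*)$ in degree $-2$ is exactly the torsion line $\langle\iota(q^2)\rangle$.

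Given this picture the two computations are short. By the torsion/non-torsion dichotomy $\pi_*(g_0)=\z$, so the general lift of $\z$ is $\y=g_0+c\,\iota(\z)$ with $c\in\ztwo$. For $\Phi_1$: $V\cdot\y=U^2 g_0+c\,V\iota(\z)=g_{-4}+c\,\iota(V\z)=\iota(v)$, using $V\z=0$; since $\mathrm{Im}(\cdot Q)$ vanishes in degree $-4$ this forces $\Phi_1(\z)=v$. For $\Phi_2$: $\Phi_2(\z)=\pi_*(U\y)=\pi_*(U g_0)+c\,\pi_*(U\iota(\z))$, and the second term equals $c\cdot Q^2\z=0$ by Lemma \ref{Q2}. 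Finally $U g_0$ is non-torsion (as $U^2(U g_0)=U^3 g_0\neq0$), and every non-torsion class of $\HMf_{-2}(Y)$ has $\pi_*$-image $q^2$ because $\ker(\pi_*)$ there is the torsion line $\langle\iota(q^2)\rangle$; hence $\pi_*(U g_0)=q^2$ and $\Phi_2(\z)=q^2$.

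The hard part will be the bookkeeping of the $U$-action in degrees $0$ and $-2$, the only places where $\HMf_{\bullet}(Y)$ has rank two: because $\pi_*$ is only $V=U^2$-linear and not $U$-linear, one must cleanly separate the tower generators from the reduced ($U$-torsion) classes there and verify the answers do not depend on the choice of lift $\y$. The leverage that makes this routine is Lemma \ref{Q2}, which controls $\pi_*\circ U\circ\iota$ and thereby annihilates the lift ambiguity, together with the elementary observation that a non-torsion class cannot lie in the torsion line spanning $\ker(\pi_*)$; these two facts force $\pi_*(U g_0)=q^2$ and $U^2 g_0=\iota(v)$, completing the proof.
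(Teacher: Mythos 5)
Your proposal is correct and follows essentially the same route as the paper: reduce both products via Theorem \ref{Massey} to the Gysin-triangle operations, identify the relevant component of $\HMf_{\bullet}(Y)$ as $\ztwo[[U]]\oplus\ztwo[[U]]/U^2$, and use the fact that $\iota(q^2)$ and $\iota(\z)$ are $U$-torsion (since $Vq^2, V\z\in\mathrm{Im}\,Q$) to pin down $\pi_*(g_0)=\z$, $\pi_*(Ug_0)=q^2$ and $U^2g_0=\iota(v)$. Your treatment is somewhat more explicit than the paper's (tracking the lift ambiguity $\y=g_0+c\,\iota(\z)$ and flagging the reversal $\langle V,\z,Q\rangle=\langle Q,\z,V\rangle$, which the paper leaves implicit in ``the second statement follows in the same way''), but the substance is identical.
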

\begin{proof}
We use the Gysin sequence characterization of the triple Massey product, Theorem \ref{Massey}. In particular, the corresponding component in $\HMf_{\bullet}$ is given by $\ztwo[[U]]\oplus\ztwo[[U]]/U^2$. In degrees $\geq-6$, the Gysin sequence looks like
\begin{center}
\begin{tikzpicture}
\matrix (m) [matrix of math nodes,row sep=0.5em,column sep=0.5em,minimum width=2em]
  {

  \ztwo &&&&& \ztwo& \ztwo&&&&&\ztwo\\
  \cdot &&&&&\cdot &\cdot&&&&&\cdot \\
  \ztwo &&&&& \ztwo&\ztwo&&&&& \ztwo\\
  \cdot &&&&& &\cdot&&&&&\cdot \\
  \ztwo&&&&&&\ztwo&&&&&\ztwo\\
  \ztwo&&&&&&\cdot&&&&&\ztwo\\
  \ztwo&&&&&&\ztwo&&&&&\ztwo\\
  };
  \path[-stealth]
(m-1-7) edge[] node {}(m-1-12)
(m-3-7) edge[] node {}(m-3-12)
(m-7-7) edge[] node {}(m-7-12)
(m-1-1) edge[] node {}(m-1-6)
(m-3-1) edge[] node {}(m-3-6)
(m-5-1) edge[] node {}(m-5-7)
(m-1-6) edge[bend right, dotted] node {}(m-3-6)
(m-1-7) edge[bend left, dotted] node {}(m-3-7)
(m-3-7) edge[bend left, dotted] node {}(m-5-7)
(m-5-7) edge[bend left, dotted] node {}(m-7-7)
	  
  ;
    \draw[dotted]  (-1.5,-1.7) -- (-1.5,1.7);
        \draw[dotted]  (1.5,-1.7) -- (1.5,1.7);
\end{tikzpicture}
\end{center}
where the dotted arrows represent the $U$ action. Consider the element $1$ in the first summand. Then comparing with the Gysin triangle in $\HSb_{\bullet}$, we see that $\pi_*(1)=\z$. Furthermore $V\iota(q^2)=\iota (q^2v)=0$, hence $U$ is not in the image of $\iota$ and $\pi_*(U)=q^2$. The second statement follows in the same way.
\end{proof}
In the case for general even $n=2k$, we have that the module structure is given by
the $\Rin$-module structure for a manifold of type $-M_1$ is given by
\begin{equation*}
(\ztwo[V]\langle-4k\rangle\oplus\ztwo[V]\langle -4k-1\rangle)\oplus\ztwo[V]\langle-2\rangle\oplus\ztwo_0[V]/V^n
\end{equation*}
where the action of $Q$ is injective from the first tower to the second, and from the second to the third. We will denote the direct sum of the first three terms $N_k$. Graphically, in the case $n=2$,
\begin{center}
\begin{tikzpicture}
\matrix (m) [matrix of math nodes,row sep=0.1em,column sep=0.7em,minimum width=0.1em]
  {   \ztwo_0 &\cdot  &\ztwo_{-2}&\cdot& \ztwo &  \cdot &\ztwo&\cdot &\ztwo&\ztwo&\ztwo&\cdots\\};

\path[-stealth]
(m-1-3) edge[bend left] node {}(m-1-7)
(m-1-7) edge[bend left] node {}(m-1-11)
(m-1-9) edge[bend right] node {}(m-1-10)
(m-1-10) edge[bend right] node {}(m-1-11)
(m-1-1) edge[bend left] node {}(m-1-5)
;
\end{tikzpicture}
\end{center}
Denote the generators of the summands by $q^2$, $v^n$, $qv^n$ and $\z$. We then have for example the relations
\begin{equation*}
\langle \z,Q,Q^2\rangle=q^2\quad \langle V^n,\z,Q\rangle=v^n.
\end{equation*}
This follows in the same way as Lemma \ref{-Mn}, using the fact that the corresponding component in $\HMf_{\bullet}$ is given by $\ztwo[U]\oplus\ztwo[U]/U^{2k}$.

\vspace{0.3cm}
\textit{Seifert spaces. }Another large class of manifolds for which the Massey products described in Theorem \ref{Massey} can be understood explicitly is given by Seifert spaces. The main observation here is that we can assume that all irreducible solutions have odd degree for a suitable choice of orientation (see \cite{MOY}, and also \cite{Dai} for a discussion of the more general case of plumbed manifolds).
\begin{example}\label{mccoy}
Rather that describing a general theory (which would be analogous to parts of the content of \cite{DaiS}), let us focus on an example (due to Duncan McCoy) that involves three or more summands. Consider the Seifert space $Y=\Sigma(13,21,34)$. Then, up to grading shifts, we have that
\begin{equation*}
\HMf_{\bullet}(Y)=\ztwo[[U]]\oplus(\ztwo[[U]]/U^6)_{11}\oplus(\ztwo[[U]]/U^5)_{11} \oplus(\ztwo[[U]]/U^4)_{9}\oplus J^{\oplus 2}
\end{equation*}
where the involution action exchanges the two copies of $J^{\otimes 2}$. As for this orientation there are only irreducible critical points of odd degree, it is straightforward to reconstruct the underlying chain complex $\hat{C}_{\bullet}(Y)$ (where we forget about the $J^{\oplus 2}$ summand as it is irrelevant for our purposes):
\begin{center}
\begin{tikzpicture}
\matrix (m) [matrix of math nodes,row sep=0.5em,column sep=0.7em,minimum width=0.1em]
  {  &\ztwo &&\ztwo &&\ztwo &&\ztwo &&\ztwo &&\ztwo_0 &\cdot&\ztwo &\\
  \ztwo^2_{\x}&&\ztwo^2&&\ztwo^2&&\ztwo^2&&\ztwo^2&&\ztwo^2\\
  &&\ztwo^2_{\y}&&\ztwo^2&&\ztwo^2&&\ztwo^2\\};

\path[-stealth]
(m-1-2) edge[bend right,dashed] node {}(m-2-3)
(m-1-2) edge[bend right,dashed] node {}(m-3-3)
(m-1-4) edge[bend right,dashed] node {}(m-2-5)
(m-1-4) edge[bend right,dashed] node {}(m-3-5)
(m-1-6) edge[bend right,dashed] node {}(m-2-7)
(m-1-6) edge[bend right,dashed] node {}(m-3-7)
(m-1-8) edge[bend right,dashed] node {}(m-2-9)
(m-1-8) edge[bend right,dashed] node {}(m-3-9)
(m-1-10) edge[bend right,dashed] node {}(m-2-11)
(m-1-2) edge[bend left,dotted] node {}(m-1-4)
(m-1-4) edge[bend left,dotted] node {}(m-1-6)
(m-1-6) edge[bend left,dotted] node {}(m-1-8)
(m-1-8) edge[bend left,dotted] node {}(m-1-10)
(m-1-10) edge[bend left,dotted] node {}(m-1-12)
(m-1-12) edge[bend left,dotted] node {}(m-1-14)
(m-2-1) edge[bend left,dotted] node {}(m-2-3)
(m-2-3) edge[bend left,dotted] node {}(m-2-5)
(m-2-5) edge[bend left,dotted] node {}(m-2-7)
(m-2-7) edge[bend left,dotted] node {}(m-2-9)
(m-2-9) edge[bend left,dotted] node {}(m-2-11)
(m-3-3) edge[bend left,dotted] node {}(m-3-5)
(m-3-5) edge[bend left,dotted] node {}(m-3-7)
(m-3-7) edge[bend left,dotted] node {}(m-3-9)
;
\end{tikzpicture}
\end{center}
Here the first row represents the tower corresponding to the reducible solution, while the second and third rows correspond to the irreducible solutions; the dotted arrows depict the action of $U$, while the dashed ones represent the differential (where each dashed arrow sends the generator of $\ztwo$ to the sum of the generators of $\ztwo^2$). The natural involution $\jmath$ fixes the first row and exchanges the summands in each copy of $\ztwo^2$. Also, we labeled the irreducible generators as $\ztwo[[U]]$-modules by $\x$, $\y$ and their conjugates via $\jmath$. The invariant chain complex $\hat{C}^{\jmath}_{\bullet}(Y)$ is therefore
\begin{center}
\begin{tikzpicture}
\matrix (m) [matrix of math nodes,row sep=0.5em,column sep=0.7em,minimum width=0.1em]
  {  &\ztwo &\ztwo&\ztwo &&\ztwo &\ztwo&\ztwo &&\ztwo &\ztwo&\ztwo_0 &\cdot&\ztwo &\\
  \ztwo&&\underline{\ztwo}&&\ztwo&&\ztwo&&\ztwo&&\ztwo\\
  &&\underline{\ztwo}&&\ztwo&&\ztwo&&\ztwo\\};

\path[-stealth]
(m-1-2) edge[bend right,dashed] node {}(m-2-3)
(m-1-2) edge[bend right,dashed] node {}(m-3-3)

(m-1-6) edge[bend right,dashed] node {}(m-2-7)
(m-1-6) edge[bend right,dashed] node {}(m-3-7)
(m-1-10) edge[bend right,dashed] node {}(m-2-11)
;
\end{tikzpicture}
\end{center}
where the two underlined summands are generated by respectively $(1+\jmath)U\x$ and $(1+\jmath)\y$.

The $\Pin$-monopole Floer homology $\HSf_{\bullet}(Y)$ is then
\begin{center}
\begin{tikzpicture}
\matrix (m) [matrix of math nodes,row sep=0.5em,column sep=0.7em,minimum width=0.1em]
  {  & \cdot&\ztwo&\ztwo_8 &&\cdot &\ztwo&\ztwo &&\cdot &\ztwo&\ztwo_0 &\cdot&\ztwo &\\
  \ztwo&&&&\ztwo&&&&\ztwo&&\\
  &&\underline{\ztwo}&&\ztwo&&\ztwo&&\ztwo\\};

\path[-stealth]

(m-1-4) edge[bend right] node {}(m-2-5)
(m-1-4) edge[bend right] node {}(m-3-5)
(m-1-8) edge[bend right] node {}(m-2-9)
(m-1-8) edge[bend right] node {}(m-3-9)
(m-2-1) edge[bend left] node {}(m-2-5)
(m-2-5) edge[bend left] node {}(m-2-9)
(m-3-3) edge[bend left] node {}(m-3-7)
(m-3-5) edge[bend left] node {}(m-3-9)
(m-2-1) edge[bend right] node {}(m-3-3)
;
\end{tikzpicture}
\end{center}
where the solid arrows denote the $Q$ and $V$ actions. From this description, and the fact that the Gysin exact sequence is the long exact sequence induced by the short exact sequence of chain complexes (\ref{gysinchain}), one can readily determine the non-trivial Massey products. Let us spell out a specific example. Denote the underlined class in $\HSf_{\bullet}(Y)$ by $\z$. It is represented by either $(1+\jmath)U\x$ or $(1+\jmath)\y$. The Massey product $\langle \z,Q^2,Q\rangle$ is by Theorem \ref{Massey} a class mapping via $\iota$ to the class in $\HMf_{\bullet}(Y)$ of either $(1+\jmath)U^2\x$ or $(1+\jmath)U\y$. Each of these generates one of the $\ztwo$ summands in degree $7$. Of course, once we quotient by the image of $Q$, they are identified, as their sum is the image of the class in degree $8$. Even though involving a different definition, the Massey product $\langle \z,Q,Q^2\rangle$ also consists of the same two classes, which are again identified under the image of $Q^2$.
\end{example}

\vspace{0.3cm}
\section{Some homological algebra over $\Rin$}\label{projres}
In this section, we discuss some homological algebra relevant in the description of the $E^2$-page of our spectral sequence
\begin{equation*}
\Tor^{\Rin}_{*,*}(M,N)
\end{equation*} 
for a pair of graded $\Rin$-modules $M$ and $N$. Recall that $\Tor^{\Rin}_{*,*}$ arises in our setting naturally as the homology of the tensor product of $M$ with the bar resolution of $N$. While the latter object has nice formal properties, it is quite unmanageable for actual explicit computations. As the computation of $\Tor^{\Rin}_{*,*}$ is independent of the choice of projective resolution, we first discuss how to compute a particularly nice projective resolution of $N$, called \textit{minimal free resolution}. As the name suggests, this will be very efficient in terms of size. On the other hand, when discussing higher differentials we will need to represent classes in $E^2$ as elements of the bar complex, and the second part of the section will be devoted to translating back in this language the construction using minimal resolutions.
\\
\par
While there is in general no satisfactory classification of finitely generated modules over $\Rin$, the theory of their resolutions is quite well understood, see for example \cite{Eis} and \cite{Ser} for a more general and detailed treatment. The ring $\Rin$ is local with maximal ideal $\m$ generated by $Q$ and $V$. Given a graded module $L$, we will denote by $\bar{L}$ the $\ztwo$-vector space $L/\m L$. We say that a graded $\Rin$-module homomorphism $u:L\rightarrow L'$ is \textit{minimal} if
\begin{itemize}
\item $u$ is surjective;
\item $\mathrm{ker}(u)\subset \m L$.
\end{itemize}
This is equivalent (by Nakayama's lemma) to require that the induced map $\bar{u}:\bar{L}\rightarrow \bar{L}'$ is an isomorphism. A \textit{graded minimal free resolution} of a graded $\Rin$-module $N$ is a graded resolution of the form
\begin{equation*}
N\stackrel{d_0}{\longleftarrow}\Rin^{n_1}\stackrel{d_1}{\longleftarrow}\Rin^{n_2}\stackrel{d_2}{\longleftarrow}\Rin^{n_3}\longleftarrow\cdots
\end{equation*}
where $d_i:\Rin^{n_i}\rightarrow \mathrm{ker}(d_{i-i})\subset \Rin^{n_{i-1}}$ is minimal for each $i$. Here we omit the from the notation the grading shift of each $\Rin$ component. The main result from \cite{Eis} is the following.
\begin{thm}
Every finitely generated $\Rin$-module $N$ admits a graded minimal free resolution. Any two minimal free resolution are (non canonically) isomorphic, and indeed two periodic for $i\geq 3$ (up to grading shift). In particular, we have the isomorphism
\begin{equation*}
\Tor^{\Rin}_{i+2,k-3}(M,N)\cong \Tor^{\Rin}_{i,k}(M,N)
\end{equation*}
for $i\geq 2$.
\end{thm}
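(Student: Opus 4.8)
The plan is to identify $\Rin$ as a \emph{hypersurface} ring and then invoke Eisenbud's theory of matrix factorizations \cite{Eis}, from which both the two-periodicity and the precise grading shift will fall out.

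First I would dispose of existence and uniqueness, which are formal. Since $\ztwo[[V]]$ is a complete discrete valuation ring, $\Rin$ is a Noetherian local ring with maximal ideal $\m=(Q,V)$ and residue field $\ztwo$. Given a finitely generated graded module $N$, I choose homogeneous elements mapping to a $\ztwo$-basis of $\bar N=N/\m N$; by Nakayama these generate $N$ and define a minimal surjection $d_0:\Rin^{n_1}\to N$. Replacing $N$ by $\ker d_0\subset\m\Rin^{n_1}$ and iterating produces the resolution, minimality being exactly the requirement $d_i(\Rin^{n_{i+1}})\subset\m\Rin^{n_i}$, equivalently $\bar d_i=0$. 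The standard comparison (lifting) argument then shows that any two minimal resolutions are isomorphic, non-canonically, with the ranks $n_i$ and the differentials $d_i$ determined up to graded isomorphism.

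The substantive point is the periodicity. Setting $S=\ztwo[[V,Q]]$, the two-variable power series ring, one has a graded isomorphism $\Rin\cong S/(Q^3)$: modulo $Q^3$ every power series in $Q$ truncates, so $S/(Q^3)=\ztwo[[V]][Q]/(Q^3)$. Here $S$ is a regular local ring of Krull dimension $2$, and $f=Q^3$ is a nonzerodivisor (as $S$ is a domain) lying in $\m_S^2$, so that $\Rin$ is genuinely singular and resolutions are genuinely infinite. Eisenbud's structure theorem for modules over a hypersurface now applies: the high syzygies of any finitely generated module become maximal Cohen--Macaulay, and their minimal resolutions are governed by a matrix factorization $(\varphi,\psi)$ of $f$ over $S$, that is, a pair of square matrices with $\varphi\psi=\psi\varphi=f\cdot\mathrm{Id}$. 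Reducing mod $f$ yields a minimal $\Rin$-free resolution that is eventually $2$-periodic, the two alternating differentials being $\bar\varphi$ and $\bar\psi$; since the periodic regime is reached after at most $\dim S=2$ steps, this accounts for the bound $i\geq 3$.

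It remains to pin down the internal grading. Because the resolution is graded, the matrices $\varphi,\psi$ are homogeneous once the appropriate shifts are recorded on the free modules, and from $\varphi\psi=f\cdot\mathrm{Id}$ the composite over one full period has internal degree $\deg(Q^3)=-3$ (recall $\deg Q=-1$). Hence passing from homological degree $i$ to $i+2$ reproduces the same free module up to $\langle 3\rangle$, that is, $F_{i+2}\cong F_i\langle 3\rangle$ and $d_{i+2}\cong d_i$ in the periodic range. Tensoring such a resolution with $M$ and taking homology then gives $\Tor^{\Rin}_{i+2,k-3}(M,N)\cong\Tor^{\Rin}_{i,k}(M,N)$ in the stated range $i\geq 2$. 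I expect the main obstacle to be bookkeeping rather than conceptual: one must run Eisenbud's theorem in the \emph{graded} category --- really the completed-graded category, since $\ztwo[[V]]$ is a power series ring --- and verify both that the period-$2$ shift is exactly $\deg f$ and that the transition to periodicity occurs at the claimed index rather than one step earlier or later.
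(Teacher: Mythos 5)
Your argument follows the same route as the paper: existence and uniqueness via Nakayama's lemma and the standard lifting argument, and two-periodicity by recognizing $\Rin$ as the hypersurface $\ztwo[[V,Q]]/(Q^3)$ and invoking Eisenbud's matrix factorization theorem, with the internal shift $-3$ coming from $\deg(Q^3)=-3$. The paper cites \cite{Eis} for exactly this (noting only, as a caveat you might add, that one must check the relevant two-periodicity results there hold over the finite field $\ztwo$), so your proposal matches its proof.
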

For our purposes, the existence statement is the most important, and we now quickly review the explicit construction. Notice first that for any module $M$, there is a minimal map $u:\Rin^n\rightarrow M$. This can be constructed by choosing a basis $\{\bar{e}_i\}$, $i=1,\dots, N$ of $\bar{M}$ over $\ztwo$. Lifting these elements to $e_i\in M$ provides a minimal
\begin{equation*}
u:\Rin^N\rightarrow M.
\end{equation*}
Given now a $\Rin$-module $N$, choose a minimal map $d_0:\Rin^{n_0}\rightarrow N$. Inductively, we can choose a minimal map $d_i:\Rin^{n_i}\rightarrow \mathrm{ker} d_{i-1}$, and these form a minimal resolution.
\par
Let us comment about the rest of the statement. The two-periodicity of the minimal free resolution is a general consequence of the fact that we are considering a hypersurface, namely the zero set of the polynomial $(Q^3)\subset \ztwo[[Q,V]]$, see \cite{Eis} (notice that while several results in the paper do not hold for finite fields, the results about two-periodic resolutions in Section $5$ and $6$ hold). Furthermore, the dimension $n_i$ is independent of $i\geq3$. In particular, to each module $N$ we can associate a matrix factorization
\begin{equation*}
\ztwo[[Q,V]]^n \xtofrom[B]{A}\ztwo[[Q,V]]^n
\end{equation*}
where $A$ and $B$ are the $n\times n$ matrices corresponding to $d_{2i}$ and $d_{2i+1}$ for $i>>0$, respectively. These have the property that $AB=BA=Q^3\cdot I$.
\\
\par
Given this general discussion, let us provide some concrete examples in which we describe the minimal free resolution.
\begin{example}
The trivial $\Rin$-module $\ztwo$ (thought of in degree zero) has a projective resolution
\begin{equation*}
\ztwo\stackrel{d_0}{\longleftarrow} \Rin\stackrel{d_1}{\longleftarrow} \Rin\oplus\Rin\stackrel{d_2}{\longleftarrow} \Rin\oplus\Rin\stackrel{d_3}{\longleftarrow} \Rin\oplus\Rin\stackrel{d_4}{\longleftarrow}\cdots
\end{equation*}
where, in matrix notation, $d_1=(V,Q)$, and for $i\geq 0$ we have
\begin{equation*}
d_i=
\begin{cases}
\begin{bmatrix}
Q & 0\\
V & Q^2
\end{bmatrix}& \text{ if }i\text{ is even}\\
\begin{bmatrix}
Q^2 & 0\\
V & Q
\end{bmatrix}& \text{ if }i\text{ is odd}.
\end{cases}
\end{equation*}
This is clearly $2$-periodic for $i\geq2$.
\end{example}
\begin{example}
Consider the module
\begin{center}
\begin{tikzpicture}
\matrix (m) [matrix of math nodes,row sep=0.1em,column sep=0.7em,minimum width=0.1em]
  { \ztwo&\cdot &\cdot&\ztwo&\ztwo&\cdot&\ztwo&\ztwo&\ztwo&\cdots  \\
  &\ztwo\\};

\path[-stealth]

(m-1-1) edge[bend right] node {}(m-2-2)
(m-1-4) edge[bend right] node {}(m-1-5)
(m-1-8) edge[bend right] node {}(m-1-9)
(m-1-7) edge[bend right] node {}(m-1-8)
(m-1-1) edge[bend left] node {}(m-1-5)
(m-1-4) edge[bend left] node {}(m-1-8)
(m-1-5) edge[bend left] node {}(m-1-9)
;
\end{tikzpicture}
\end{center}
This has the minimal free resolution
\begin{equation*}
N\stackrel{d_0}{\longleftarrow}\Rin\stackrel{d_1}{\longleftarrow}\Rin^{3}\stackrel{d_2}{\longleftarrow}\Rin^{3}\stackrel{d_3}{\longleftarrow}\Rin^{3}\longleftarrow\cdots
\end{equation*}
where 
\begin{equation*}
d_1=\begin{bmatrix}Q &0&0&0\\
V&Q^2&Q&0\\
0&0&V&Q^2\\\end{bmatrix}
\end{equation*}
and for $i\geq 1$ we have
\begin{equation*}
d_{2i}=\begin{bmatrix}Q^2 &0&0&0\\
V& Q&0&0\\
0& 0&Q^2&0\\
0&0&V&Q\\
\end{bmatrix}
\quad
d_{2i+1}=\begin{bmatrix}Q &0&0&0\\
V& Q^2&0&0\\
0& 0&Q&0\\
0&0&V&Q^2\\
\end{bmatrix}\end{equation*}
\end{example}
\begin{example}
Consider the module
\begin{center}
\begin{tikzpicture}
\matrix (m) [matrix of math nodes,row sep=0.1em,column sep=0.7em,minimum width=0.1em]
  { \ztwo&\cdot &\ztwo&\ztwo&\ztwo&\cdots  \\
  &\ztwo\\};

\path[-stealth]

(m-1-1) edge[bend right] node {}(m-2-2)
(m-1-4) edge[bend right] node {}(m-1-5)
(m-1-3) edge[bend right] node {}(m-1-4)
(m-1-1) edge[bend left] node {}(m-1-5)

;
\end{tikzpicture}
\end{center}
This has the minimal free resolution
\begin{equation*}
N\stackrel{d_0}{\longleftarrow}\Rin\stackrel{d_1}{\longleftarrow}\Rin^{3}\stackrel{d_2}{\longleftarrow}\Rin^{3}\stackrel{d_3}{\longleftarrow}\Rin^{3}\longleftarrow\cdots
\end{equation*}
where 
\begin{equation*}
d_1=\begin{bmatrix}Q^2 &0\\
V&Q^2\\\end{bmatrix}
\end{equation*}
and for $i\geq 1$
\begin{equation*}
d_{2i}=\begin{bmatrix}Q^2 &0\\
V& Q\\
\end{bmatrix}
\quad
d_{2i+1}=\begin{bmatrix}Q &0\\
V& Q^2\\
\end{bmatrix}
\end{equation*}

\end{example}
\begin{example}
Consider the module
\begin{center}
\begin{tikzpicture}
\matrix (m) [matrix of math nodes,row sep=0.1em,column sep=0.7em,minimum width=0.1em]
  { \ztwo&\ztwo&\cdot &\cdot&\ztwo  \\};

\path[-stealth]

(m-1-1) edge[bend right] node {}(m-1-2)
(m-1-1) edge[bend left] node {}(m-1-5)
;
\end{tikzpicture}
\end{center}
This has the minimal free resolution
\begin{equation*}
N\stackrel{d_0}{\longleftarrow}\Rin\stackrel{d_1}{\longleftarrow}\Rin^{3}\stackrel{d_2}{\longleftarrow}\Rin^{3}\stackrel{d_3}{\longleftarrow}\Rin^{3}\longleftarrow\cdots
\end{equation*}
where 
\begin{equation*}
d_1=\begin{bmatrix}V^2 &QV&Q^2\end{bmatrix}
\end{equation*}
and for $i\geq 1$ we have
\begin{equation*}
d_{2i}=\begin{bmatrix}Q &0&0\\
V& Q&0\\
0& V&Q
\end{bmatrix}
\quad
d_{2i+1}=\begin{bmatrix}Q^2 &0&0\\
QV& Q^2&0\\
V^2& QV&Q^2
\end{bmatrix}
\end{equation*}
This provides an interesting factorization of $Q^3\cdot I_3$.
\end{example}

\vspace{0.5cm}

We now discuss how to represent classes in $\Tor^{\Rin}_{*,*}(M,N)$ in terms of the bar resolution. Let us start with the simplest case of $\Tor^{\Rin}_{*,*}(\ztwo,\ztwo)$, which is well studied in light of the classical Eilenberg-Moore spectral sequence in algebraic topology (see for example \cite{McC}). In this case, for graded algebras $A$ and $B$ over $\ztwo$, there is an isomorphisms of graded bimodules
\begin{equation*}
\Tor^{A\otimes B}_{*,*}(\ztwo,\ztwo)=\Tor^{A}_{*,*}(\ztwo,\ztwo)\otimes \Tor^{B}_{*,*}(\ztwo,\ztwo)
\end{equation*}
which is indeed also an isomorphism of coalgebras. Denoting the bar resolution by $\hat{B}$, this is induced by the map
\begin{equation*}
\hat{B}(A)\otimes \hat{B}(B)\stackrel{EZ}{\longrightarrow}\hat{B}(A\otimes B).
\end{equation*}
The map $EZ$ is the shuffle map appearing in the proof of the Eilenberg-Zilber theorem, namely
\begin{equation*}
EZ\big((a_1\lvert \cdots\lvert a_p) \otimes (b_1\lvert \cdots\lvert b_q)\big)=\sum_{(p,q)-\text{shuffle }\sigma} c_{\sigma(1)}\lvert \cdots\lvert c_{\sigma(p+q)}
\end{equation*}
where 
\begin{equation*}
c_{\sigma(i)}=\begin{cases}
a_{\sigma(i)}\otimes 1 \text{ if }1\leq \sigma(i)\leq p\\
1\otimes b_{\sigma(i)-p}\text { if }p+1\leq \sigma(i)\leq p+q,
\end{cases}
\end{equation*}
and a $(p,q)$-shuffle is a permutation $\sigma$ of $\{1,\dots, p+q\}$ such that
\begin{align*}
\sigma(1)<\sigma(2)<&\dots <\sigma(p-1)<\sigma(p)\\
\sigma(p+1)<\sigma(p+2)<&\dots <\sigma(p+1-1)<\sigma(p+q).
\end{align*}
In particular, the computation in our case is quite simple:
\begin{equation*}
\Tor^{\Rin}_{*,*}(\ztwo,\ztwo)=\Tor^{\ztwo[[V]]}_{*,*}(\ztwo,\ztwo)\otimes\Tor^{\ztwo[Q]/Q^3}_{*,*}(\ztwo,\ztwo).
\end{equation*}
The group $\Tor^{\ztwo[[V]]}(\ztwo,\ztwo)$ is non zero only in bidegrees $(0,0)$ and $(1,-3)$ (generated by the empty product $[]$ and $V$ respectively), while $\Tor^{\ztwo[Q]/Q^3}(\ztwo,\ztwo)$ is non zero in degrees $(2i,-3i)$ and $(2i+1,-3i-1)$ for $i\geq0$. If we define the $n$-uple
\begin{equation*}
Q_n=\begin{cases}
(Q,Q^2,Q,\cdots, Q,Q^2,Q)&\text{ if }n\text{ is odd,}\\
(Q^2,Q,Q^2,\cdots, Q,Q^2,Q)&\text{ if }n\text{ is even,}
\end{cases}
\end{equation*}
then the generator of $\Tor^{\ztwo[Q]/Q^3}_{*,*}(\ztwo,\ztwo)$ in homological degree $n$ is represented by $Q_n$.
Putting these pieces together, we see that $\Tor^{\Rin}_{*,*}(\ztwo,\ztwo)$ is a copy of $\ztwo$ in the cases in which $(i,j)$ is
\begin{itemize}
\item $(0,0)$;
\item $(2n,-3n)$ and $(2n,-3n-2)$ for $n\geq0$;
\item $(2n+1,-3n-1)$ and $(2n+1,-3n-4)$ for $n\geq0$.
\end{itemize}
The generators of $\Tor^{\Rin}_{*,*}(\ztwo,\ztwo)$ can then be described explicitly in terms of the shuffle map. Given an orderered $n$-uple $(a_1,\dots,a_n)$ and element $b$ of elements in $\Rin$, we define their shuffle as
\begin{equation*}
\mathrm{sh}((a_1,\dots,a_n),b)=\sum_{i=0}^{n} a_1\lvert\cdots \lvert a_i\lvert b\lvert a_{i+1}\lvert\cdots\lvert a_n\in \Rin^{\otimes(n+1)}.
\end{equation*}
Then representatives of the classes described above are given respectively by
\begin{itemize}
\item $[]$;
\item $Q_{2n}$ and $\mathrm{sh}(Q_{2n-1},V)$;
\item $Q_{2n+1}$ and $\mathrm{sh}(Q_{2n},V)$.
\end{itemize}
The following picture represents the groups for $i\leq 7$, where the top right element has bigrading $(0,0)$. It should make apparent the two periodicity of the $\Tor^{\Rin}_{*,*}(\ztwo,\ztwo)$.
\begin{center}
\begin{tikzpicture}
\matrix (m) [matrix of math nodes,row sep=0.1em,column sep=0.7em,minimum width=0.1em]
  { \ztwo\\
  &\ztwo\\
  &\cdot\\
  &\cdot&\ztwo\\
  &\ztwo&\cdot&\ztwo\\
  &&\ztwo&\cdot&\\
  &&&\cdot&\ztwo\\
  &&&\ztwo&\cdot&\ztwo\\
  &&&&\ztwo&\cdot\\
  &&&&&\cdot&\ztwo\\
  &&&&&\ztwo&\cdot\\
  &&&&&&\ztwo\\};
\end{tikzpicture}
\end{center}

\vspace{0.3cm}
For general $\Tor^{\Rin}_{*,*}(M,N)$, we can adapt this approach involving shuffle maps by taking into account a minimal resolution of one of the two modules. Consider the minimal free resolution
\begin{equation*}
N\stackrel{d_0}{\longleftarrow}\Rin^{n_1}\stackrel{d_1}{\longleftarrow}\Rin^{n_2}\stackrel{d_2}{\longleftarrow}\Rin^{n_3}\stackrel{d_3}{\longleftarrow}\Rin^{n_4}\longleftarrow\cdots,
\end{equation*}
so that $\Tor^{\Rin}_{*,*}(M,N)$ is the homology of the chain complex
\begin{equation*}
M^{n_1}\stackrel{1_M\otimes d_1}{\longleftarrow}M^{n_2}\stackrel{1_M\otimes d_2}{\longleftarrow}M^{n_3}\stackrel{1_M\otimes d_3}{\longleftarrow}M^{n_4}\longleftarrow\cdots
\end{equation*}
obtained by tensoring over $\Rin$ with $M$.
Our goal is to define a canonical quasi-isomorphism $\{\varphi_i\}$ between the minimal free resolution
\begin{equation*}
\Rin^{n_1}\stackrel{d_1}{\longleftarrow}\Rin^{n_2}\stackrel{d_2}{\longleftarrow}\Rin^{n_3}\stackrel{d_3}{\longleftarrow}\Rin^{n_4}\longleftarrow\cdots
\end{equation*}
and the bar resolution
\begin{equation*}
\Rin\otimes  N\stackrel{\delta_1}{\longleftarrow} \Rin\otimes\Rin\otimes  N\stackrel{\delta_2}{\longleftarrow} \Rin\otimes\Rin\otimes\Rin\otimes  N\stackrel{\delta_3}{\longleftarrow}\cdots
\end{equation*}
Of course, the map
\begin{equation*}
\varphi_1: \Rin^{n_1}\rightarrow \Rin\otimes N
\end{equation*}
is given by
\begin{equation*}
\x\mapsto 1\otimes d_0(\x).
\end{equation*}
Suppose now inductively that we are given
\begin{equation*}
\varphi_i: \Rin^{n_i}\rightarrow \Rin^{i}\otimes N.
\end{equation*}
We have $d_{i+1}(e_j)=\sum r_{jk} e'_k$ where $e_j$ and $e'_k$ are the standard bases of $\Rin^{n_{i+1}}$ and $\Rin^{n_{i}}$ respectively. We then define
\begin{align*}
\varphi_{i+1}&: \Rin^{n_{i+1}}\rightarrow \Rin^{i+1}\otimes N\\
e_j&\mapsto \sum 1\otimes(r_{jk}\cdot \varphi_i(e'_k)).
\end{align*}
This is readily checked to be a chain map, and as the complexes are acyclic in degrees $\geq 1$ it is a quasi-isomorphism. Using this quasi-isomorphism, one can readily describe elements in $\Tor^{\Rin}_{*,*}$ in terms of a minimal free resolution of $N$. Indeed, if $\underline{m}=(m_j)\in M^{n_k}$ is in the kernel of $d_{k-1}$, then it corresponds to the cycle
\begin{equation*}
\sum m_j \otimes \varphi_{k+1}(e_j)\in M\otimes_{\Rin} \Rin^{k}\otimes N=M\otimes\Rin^{k-1}\otimes N.
\end{equation*}
Let us discuss this rather abstract construction in a very concrete example.
\begin{example}\label{ztwotor}
Let us generalize the description of $\Tor^{\Rin}_{*,*}(\ztwo,\ztwo)$ in terms of shuffles to $\Tor^{\Rin}_{*,*}(M,\ztwo)$, for any $\Rin$-module $M$. We computed above that the minimal free resolution of $\ztwo$ is given by
\begin{equation*}
\ztwo\stackrel{d_0}{\longleftarrow} \Rin\stackrel{d_1}{\longleftarrow} \Rin^2\stackrel{d_2}{\longleftarrow} \Rin^2\stackrel{d_3}{\longleftarrow} \Rin^2\stackrel{d_4}{\longleftarrow}\cdots,
\end{equation*}
and using the description above one can write explicit representatives for all the cycles in $\mathrm{Tor}^{\Rin}_{\ast,\ast}(M,\ztwo)$ as follows. Let us denote by $\z$ the generator of $\ztwo$. For $n\geq 1$, every element of $\mathrm{Tor}^{\Rin}_{n,\ast}(M,\ztwo)$ has a representative of the form
\begin{equation*}
\x\lvert \mathrm{sh}(Q_{n-1},V)\lvert z+ \y\lvert Q_n\lvert z
\end{equation*}
where $d_{n-1}(\x,\y)=0$.
For example:
\begin{itemize}
\item every element in $\mathrm{Tor}^{\Rin}_{1,\ast}(M,\ztwo)$ has a representative of the form
\begin{equation*}
\x\lvert V\lvert \z+\y\lvert Q\lvert \z
\end{equation*}
with $V\x+Q\y=0$, and such an element is zero if and only if $\x=Q\A$ and $\y=V\A+Q^2\B$ for some $\A,\B$;
\item every element in $\mathrm{Tor}^{\Rin}_{2,\ast}(M,\ztwo)$ is represented by
\begin{equation*}
\A \lvert Q\lvert V\lvert \z+\A \lvert V\lvert Q\lvert \z+\A \lvert Q^2\lvert Q\lvert \z
\end{equation*}
where $Q\A=0$ and $V\A+Q^2\B=0$, and such an element is zero if and only if $\A=Q^2\x$ and $\B=V\x+Q\y$ for some $\x,\y$;
\item every element in $\mathrm{Tor}^{\Rin}_{3,\ast}(M,\ztwo)$ is represented by
\begin{equation*}
\x|Q^2|Q|V|\z+\x|Q^2|V|Q|\z+\x|V|Q^2|Q|\z+\y|Q|Q^2|Q|\z,
\end{equation*}
with $Q^2\x=V\x+Q\y=0$, and such an element is zero if and only if $\x=Q\A$ and $\y=V\A+Q^2\B$ for some $\A,\B$.
\end{itemize}
The description then can be carried generalized to $n\geq 4$ in a $2$-periodic fashion.
\end{example}
\vspace{0.3cm}
\section{Connected sums with manifolds of simple type}\label{connsimple}
In this section we study the effect on Floer homology of the connected sum with a manifold of simple type $M_n$ or its opposite $-M_n$, see Definition \ref{simple}. Of course, if we are interested only in the information related to homology cobordism contained in $\HSf_{\bullet}$, we do not need to consider the additional summand $J$ in Definition \ref{simple}. We know from the previous section the general recipe to compute the $E^2$ page of the Eilenberg-Moore spectral sequence, and the goal of this section is to understand higher differentials and extensions. There are several different cases to discuss, and our treatment will combine general results with explicit examples. Before dwelling in our main cases of interest, let us discuss a warm up example.
\begin{example}\label{FF}
Suppose we have a $\Rin$-module decomposition $\HSf_{\bullet}(Y)=M\oplus\ztwo$ with no non-trivial Massey products among the two summands. We want to understand the contribution to $\HSf_{\bullet}(Y\hash Y)$ of $\mathrm{Tor}^{\Rin}_{*,*}(\ztwo,\ztwo)$. The latter was described in detail in Section \ref{projres}. As $d_2$ has bidegree $(-2,1)$, we see that the only possible non-trivial $d_2$ differentials are from a $\ztwo$ summand in bidegree $(2n,-3n)$ to a $\ztwo$ summand in bidegree $(2n-2,-3n-1)$ for $n\geq 2$. On the other hand, the former is generated by $Q_{2n}=Q^2|Q|\cdots|Q^2|Q$, and we have that $d_2(Q_{2n})=0$ as it follows from Lemma \ref{d2} and the fact that
\begin{equation*}
\langle Q,Q^2,Q\rangle=\langle Q^2,Q,Q^2\rangle=0,
\end{equation*}
as it follows from Theorem \ref{Massey} (in this simple case, we also have simple direct proofs, see Example \ref{chainvan} for the first and \cite{Lin4} for the second). Regarding $d_3$, the natural generalization of Lemma \ref{d2} describes it in terms of four-fold Massey products; and the product $\langle Q,Q^2,Q,Q^2\rangle=V$ implies that we have the differentials
\begin{equation*}
d_3(Q_{i})=\mathrm{sh}(Q_{i-4},V).
\end{equation*}
for $i\geq 5$. Graphically, we see the differentials
\begin{center}
\begin{tikzpicture}
\matrix (m) [matrix of math nodes,row sep=0.1em,column sep=0.7em,minimum width=0.1em]
  { \underline{\ztwo}\\
  &\underline{\ztwo}\\
  &\cdot\\
  &\cdot&\underline{\ztwo}\\
  &\ztwo&\cdot&\underline{\ztwo}\\
  &&\ztwo&\cdot&\\
  &&&\cdot&\ztwo\\
  &&&\ztwo&\cdot&\ztwo\\
  &&&&\ztwo&\cdot\\
  &&&&&\cdot&\ztwo\\
  &&&&&\ztwo&\cdot&\ztwo\\
  &&&&&&\ztwo\\};
  
  \path[-stealth]

(m-7-5) edge[dashed, ] node {}(m-5-2)
(m-8-6) edge[dashed, ] node {}(m-6-3)
(m-10-7) edge[dashed, ] node {}(m-8-4)
(m-11-8) edge[dashed, ] node {}(m-9-5)

;

\end{tikzpicture}
\end{center}
repeating in a $2$-periodic fashion. In particular, the spectral sequence collapses at the $E^4$ page, and (as there is no space for extensions), the final $\Rin$-module is a copy of $\ztwo^2$ in degrees $0$ and $-1$, corresponding to the underlined copies of $\ztwo$.
\end{example}
\vspace{0.3cm}
\textit{Connected sum with $M_{2k}$. }We observe that the $\Rin$-module $M_{2k}$ has a very nice $2$-step graded minimal free resolution
\begin{equation*}
M_{2k}\stackrel{d_0}{\longleftarrow} \Rin\langle4k-1\rangle\oplus\Rin\stackrel{d_1}{\longleftarrow}\Rin\langle-1\rangle
\end{equation*}
where $d_0$ sends, in the notation introduced in Section \ref{examples},
\begin{align*}
(1,0)&\mapsto qv^{-k}\\
(0,1)&\mapsto 1
\end{align*}
and $d_1$ is given in matrix notation by the matrix $(V^k,Q)$. In particular, the $E^2$-page of the Eilenberg-Moore spectral sequence for the connected sum $Y\hash M_k$ is supported on the first two colums; this implies that there are no higher differentials, so that $E^{\infty}\cong E^2$, and all we need is to understand is the extension problem. Recall furthermore that
\begin{equation*}
\mathrm{Tor}_{0,*}(\HSf_{\bullet}(Y),M_k)=\HSf_{\bullet}(Y)\otimes_{\Rin}M_k,
\end{equation*}
and by the discussion in the previous section $\mathrm{Tor}_{1,*}(\HSf_{\bullet}(Y),M_k)$ is in bijection with elements $\x\in\mathrm{ker}V^k\cap \mathrm{ker}Q$ via the assignment
\begin{equation*}
\x\rightarrow \x|V^k|qv^{-k}+\x|Q|1.
\end{equation*}
Hence, we need to understand the action of $\Rin$ on such an element. We have the following.
\begin{prop}\label{Tor1}
We have the identity
\begin{equation*}
Q\cdot (\x|V^k|qv^{-k}+\x|Q|1)=\langle Q,\x,V^k\rangle|qv^{-k}\in E^{\infty}_{0,*}.
\end{equation*}
If $V\x\neq0$, we have
\begin{equation*}
V\cdot (\x|V^k|qv^{-k}+\x|Q|1)=V\x|V^k|qv^{-k}+V\x|Q|1\in E^{\infty}_{1,*}.
\end{equation*}
while if $V\x=0$ we have
\begin{equation*}
V\cdot (\x|V^k|qv^{-k}+\x|Q|1)=\langle V,\x,Q\rangle|1\in E^{\infty}_{0,*}.
\end{equation*}
\end{prop}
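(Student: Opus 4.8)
The plan is to follow the explicit chain-level style of Lemma~\ref{d2} and Theorem~\ref{Massey}: I would apply the $\Rin$-action directly to the cycle representing the given $\mathrm{Tor}_1$-class inside the $\Ainf$-tensor complex of Theorem~\ref{mainconn}, and then read off the outcome in the associated graded $E^{\infty}$. The starting point is that the two-step minimal free resolution $M_{2k}\leftarrow\Rin\langle4k-1\rangle\oplus\Rin\leftarrow\Rin\langle-1\rangle$ with $d_1=(V^k,Q)$ forces the Eilenberg--Moore spectral sequence to be concentrated in filtrations $0$ and $1$, so it collapses, $E^{\infty}\cong E^2$, and $\HSf_{\bullet}(Y\hash M_{2k})$ carries a two-step filtration. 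Under the translation $\{\varphi_i\}$ of Section~\ref{projres} the class attached to $\x\in\ker V^k\cap\ker Q$ is exactly the cycle $z=\x|V^k|qv^{-k}+\x|Q|1$ introduced above, which is closed precisely because the syzygy $V^k\cdot qv^{-k}=Q\cdot 1$ holds in $M_{2k}$.

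Next I would compute $Q\cdot z$ and $V\cdot z$ at the chain level, using that the $\Rin$-module structure on the connected sum is realized on the tensor complex through the $\Ainf$-module operations acting on the factor $\x$: the binary operation contributes the naive product and preserves the filtration, while every higher composition lowers it. For $r=Q$ the binary term represents $Q\x=0$, so I would pick a null-homotopy $r'$ with $\partial r'=Q\x$ and add the boundary of $r'|V^k|qv^{-k}+r'|Q|1$ to cancel the filtration-$1$ part. Here the syzygy enters a second time: the two internal module operations $V^k\cdot qv^{-k}$ and $Q\cdot 1$ yield the same element $q$ of $M_{2k}$, so their contributions cancel in characteristic two, and the surviving filtration-$0$ residue in the $qv^{-k}$-slot assembles into a representative of the Massey product $\langle Q,\x,V^k\rangle$ — precisely the operation computed via the Gysin sequence in Theorem~\ref{Massey} and its $V^{k}$-generalization.

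For $r=V$ I would split into the two stated cases. When $V\x\neq0$ the binary term $V\x$ is already nonzero in $E^{\infty}_{1,*}$, and since $V\x$ still lies in $\ker V^k\cap\ker Q$ it is itself the representative $V\x|V^k|qv^{-k}+V\x|Q|1$ of another filtration-$1$ class, which is the asserted answer; the higher operations affect only lower filtration and are invisible in $E^{\infty}_{1,*}$. When $V\x=0$ the binary term is null-homotopic, and the same cancellation as for $Q$ — now with a null-homotopy of $V\x$ and using the syzygy once more — pushes the class into $E^{\infty}_{0,*}$ and leaves a representative of $\langle V,\x,Q\rangle$ in the $1$-slot.

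The main obstacle is the identification of the filtration-$0$ residue with a genuine representative of the Massey product, together with the vanishing of the ``wrong'' slot in $E^{\infty}_{0,*}=\HSf_{\bullet}(Y)\otimes_{\Rin}M_{2k}$ — the $1$-slot for the $Q$-action and the $qv^{-k}$-slot in the $V\x=0$ case. Both points reduce to the same two ingredients: the $\Ainf$-relations that interchange, up to homotopy and up to the Massey indeterminacy, the left and right $S^3$-operations entering the ternary term $m_3$ (reflecting the graded-commutativity of $\Rin$), and the defining relation of $\mathrm{Tor}_0$ coming from $d_1=(V^k,Q)$, which lets one trade a contribution $Q\cdot(-)\,|\,1$ for $V^k\cdot(-)\,|\,qv^{-k}$ and thereby absorb any $\mathrm{Im}\,Q$- or $\mathrm{Im}\,V^k$-term into the indeterminacy of $\langle Q,\x,V^k\rangle$, respectively $\langle V,\x,Q\rangle$. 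A more structural way to package all three identities would be to invoke the general description (noted after Lemma~\ref{d2}) of the $E^{\infty}$ extension problem in terms of the Massey biproducts of the bimodule operations $m_{i,j}$, and specialize it to the two-term resolution of $M_{2k}$; the chain computation above is what makes that abstract statement concrete in the present case.
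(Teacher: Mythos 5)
Your overall strategy is the right one and is essentially the paper's: the paper's Lemma \ref{ract} performs exactly the chain-level computation you describe (act by $t$ on the full cycle $\x|r|\y+\x|s|\z+a|\y+b|\z+\x|c$, add the boundaries of $d|r|\y+d|s|\z$ and $d|c$ with $\partial d=t\x$, and read off $\langle t,\x,r\rangle|\y+\langle t,\x,s\rangle|\z$ in filtration $0$), and the collapse of the spectral sequence coming from the two-step resolution is as you say. The genuine gap is in how you dispose of the ``wrong slot.'' Applying the computation with $t=Q$, $r=V^k$, $s=Q$ produces $\langle Q,\x,V^k\rangle|qv^{-k}+\langle Q,\x,Q\rangle|1$, and one needs $\langle Q,\x,Q\rangle$ to vanish modulo $\mathrm{Im}\,Q$. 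Your proposed mechanisms do not deliver this: the relation $Qw|1=V^kw|qv^{-k}$ in $M\otimes_{\Rin}M_{2k}$ only absorbs terms that are \emph{already} known to lie in $\mathrm{Im}\,Q$, and the symmetry interchanging $\langle t,\x,r\rangle$ and $\langle r,\x,t\rangle$ is vacuous when $t=r$. The vanishing $\langle r,\x,r\rangle=0$ (when $r\x=0$) is not a formal consequence of the $\Ainf$-relations or of commutativity of $\Rin$; the paper proves it via Lemma \ref{compound}, which requires a genuinely geometric input --- a symmetric choice of the bimodule data (so that $m_{2,1}(d|\y)=m_{1,2}(\y|d)$) together with a hexagonal family of metrics and perturbations on the doubly punctured cylinder interpolating between $m_3$ and $m_{2,2}$. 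Without this, or an equivalent argument, the first and third identities are not established.

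A second, smaller omission concerns the case $V\x=0$: the naive application of the computation yields $\langle V,\x,V^k\rangle|qv^{-k}+\langle V,\x,Q\rangle|1$, and $\langle V,\x,V^k\rangle$ is not of the form $\langle r,\x,r\rangle$ when $k>1$. The paper first replaces the representative $\x|V^k|qv^{-k}+\x|Q|1$ by the homologous cycle $\x|V|qv^{-1}+\x|Q|1$ (they differ by $\partial(\x|V|V^{k-1}|qv^{-k})$), and only then invokes $\langle V,\x,V\rangle=0$. Your write-up skips this re-representation step, which is needed to reduce the unwanted term to one covered by the vanishing lemma.
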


In particular, the $\Rin$-module structure of $Y\hash M_k$ is determined entirely by the triple Massey products of the form $\langle Q,\x,V^k\rangle$, which we have described in Section \ref{Massey} in terms of the Gysin exact triangle. Let us discuss a simple example (see also Section \ref{moreex} for more examples).
\begin{example}
Let us compute the homology of $-M_2\hash M_4$, using the result above. The $E^2$ page is computed to be, graphically,
\begin{center}
\begin{tikzpicture}
\matrix (m) [matrix of math nodes,row sep=0.1em,column sep=0.7em,minimum width=0.1em]
  {
  &&&&\ztwo&\ztwo&\cdot&\cdot&\ztwo&\ztwo&\cdot&\ztwo&\ztwo&\ztwo&\cdots   \\
  \ztwo&\cdot &\ztwo&\cdot&\cdot&\cdot&\ztwo\\
  &&&&&&&\ztwo&&\ztwo\\
  &&&&&&&&\ztwo\\};

\draw (-5,-0.6) -- (5,-0.6);

\path[-stealth]
(m-1-6) edge[bend right] node {}(m-2-7)
(m-2-3) edge[bend right] node {}(m-2-7)
(m-2-1) edge[bend right, dotted] node {}(m-2-3)
(m-4-9) edge[dashed] node {}(m-1-9)
;
\end{tikzpicture}
\end{center}
Above the line, we have depicted $\mathrm{Tor}_{0,*}^{\Rin}(-M_2,M_4)=-M_2\otimes_{\Rin}M_4$. The first row, which is generated over $\Rin$ by $v|qv^{-2}$ and $v|1$, consists of based elements. The first two elements in the second row are $\z|qv^{-2}$ and $q^2|qv^{-2}$; we have depicted with a dotted arrow the Massey product relating them (whose existence follows from Lemma \ref{-Mn}). The solid arrows represent the non obvious $\Rin$-actions. Under the line, we represented $\mathrm{Tor}_{1,*}^{\Rin}(-M_2,M_4)$, which, by the lemma above, corresponds to $\mathrm{ker}V^k\cap \mathrm{ker}Q=\{\z\}$. It is represented by the element $\z|V^2|qv^{-2}+\z|Q|1$, and its image under the action of $Q$ is
\begin{equation*}
Q\cdot(\z|V^2|qv^{-2}+\z|Q|1)=\langle Q,\z,V^2\rangle|qv^{-2}=v^2|qv^{-2}=v|qv^{-1},
\end{equation*}
as depicted by the dashed arrow. To sum up, the final result is
\begin{center}
\begin{tikzpicture}
\matrix (m) [matrix of math nodes,row sep=0.1em,column sep=0.7em,minimum width=0.1em]
  {
  &&&&\ztwo&\ztwo&\cdot&\ztwo&\ztwo&\ztwo&\cdot&\ztwo&\ztwo&\ztwo&\cdots   \\
  \ztwo&\cdot &\ztwo&\cdot&\cdot&\cdot&\ztwo&&\oplus\\
  &&&&&&&\ztwo&&\ztwo\\};

\path[-stealth]
(m-1-6) edge[bend right] node {}(m-2-7)
(m-2-3) edge[bend right] node {}(m-2-7)
(m-2-1) edge[bend right, dotted] node {}(m-2-3)
(m-3-8) edge[bend right, dotted] node {}(m-3-10)
;
\end{tikzpicture}
\end{center}
and in particular $\alpha=\beta=2$, $\gamma=0$.
\end{example}

In fact, we have the following more general observation.
\begin{cor}\label{onlyR}
The Manolescu correction terms of $Y\hash M_k$ are determined the $\Rin$-module structure of $\HSf_{\bullet}(-Y)$.
\end{cor}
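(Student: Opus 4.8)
The plan is to reduce the computation of the correction terms entirely to module-theoretic data, and then to transport that data from $Y$ to $-Y$ via Poincar\'e duality. First I would record that, because the module $M_{k}$ admits the two-step graded minimal free resolution described above, the Eilenberg-Moore spectral sequence for $Y\hash M_k$ is supported on the columns $\mathrm{Tor}_0$ and $\mathrm{Tor}_1$ and therefore degenerates, giving $E^{\infty}\cong E^2$. Consequently the graded $\Rin$-module $\HSf_{\bullet}(Y\hash M_k)$ is completely determined by $\mathrm{Tor}^{\Rin}_{0,*}$, $\mathrm{Tor}^{\Rin}_{1,*}$, and the extension data of Proposition \ref{Tor1}. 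Since the Manolescu invariants $\alpha,\beta,\gamma$ are, by their very definition, read off from the $\alpha$-, $\beta$- and $\gamma$-towers, that is, from the images of the structure maps of the long exact sequence relating $\HSt_{\bullet}$, $\HSf_{\bullet}$ and $\HSb_{\bullet}$, which is intrinsic to the graded module $\HSf_{\bullet}$ (in contrast with the Fr\o yshov invariant $\delta$, cf. Proposition \ref{froy}), it suffices to prove that the $\Rin$-module $\HSf_{\bullet}(Y\hash M_k)$ is determined by the $\Rin$-module $\HSf_{\bullet}(-Y)$.

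Next I would dispose of the two pieces of data separately. The groups $\mathrm{Tor}^{\Rin}_{0,*}(\HSf_{\bullet}(Y),M_k)=\HSf_{\bullet}(Y)\otimes_{\Rin}M_k$ and $\mathrm{Tor}^{\Rin}_{1,*}(\HSf_{\bullet}(Y),M_k)$ depend only on $\HSf_{\bullet}(Y)$ as an $\Rin$-module. Poincar\'e duality $\HSt^{k}(-Y,\spin)\cong\HSf_{-1-k}(Y,\spin)$ identifies $\HSf_{\bullet}(Y)$ with the graded dual of $\HSt_{\bullet}(-Y,\spin)$; since $\HSt_{\bullet}(-Y)$ and $\HSf_{\bullet}(-Y)$ determine one another as $\Rin$-modules through the long exact sequence with the standard module $\HSb_{\bullet}(-Y)\cong\Rint$, the module $\HSf_{\bullet}(Y)$ is recovered from the $\Rin$-module structure of $\HSf_{\bullet}(-Y)$. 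The remaining ingredient is the extension data, which by Proposition \ref{Tor1} is governed by the triple Massey products $\langle Q,\x,V^k\rangle$ and $\langle V,\x,Q\rangle$ for $\x\in\ker Q\cap\ker V^k$. Thus the entire statement comes down to showing that these particular Massey products of $Y$ are determined by the $\Rin$-module structure of $\HSf_{\bullet}(-Y)$.

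This last point is the heart of the argument, and I would establish it by the same mechanism used in the proof that $\delta'(-Y)=-\delta''(Y)$: rewrite the Massey products through the Gysin exact triangle and then dualize. Concretely, Theorem \ref{Massey} (in the form generalized to $\langle Q,\x,V^k\rangle$) expresses these products in terms of the maps $\iota$, $\pi_*$ and the $U$-action on $\HMf_{\bullet}(Y)$; Poincar\'e duality, together with the self-duality of the $U$-action and of the Gysin maps, then identifies the assertion that $\langle Q,\x,V^k\rangle$ equals a given based class of $\HSf_{\bullet}(Y)$ with a plain $Q$- or $V^k$-multiplication relation among based elements of $\HSf_{\bullet}(-Y)$, precisely the phenomenon seen, in the opposite direction, in Lemma \ref{-Mn}. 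In other words, the Massey products of $Y$ that enter Proposition \ref{Tor1} are the Poincar\'e duals of the module multiplications of $-Y$. I expect the main obstacle to be the bookkeeping in this duality: one must check that the indeterminacy quotients $\mathrm{Im}\,Q+\mathrm{Im}\,V^k$ of the Massey products correspond exactly to the cokernels of the dual multiplication maps, and that the grading shifts line up so that the degrees of the based towers, and hence the numerical values of $\alpha$, $\beta$ and $\gamma$, are recovered correctly. Once this correspondence is in place, all of the data feeding into the correction terms of $Y\hash M_k$ is visibly a function of the $\Rin$-module $\HSf_{\bullet}(-Y)$, which is the claim.
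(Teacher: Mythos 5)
Your overall strategy --- reduce via Proposition \ref{Tor1} to the Massey products $\langle Q,\x,V^k\rangle$ and then transport the answer to $-Y$ --- is the right one, but two of your intermediate claims do not hold up. First, you assert that the full $\Rin$-module $\HSf_{\bullet}(Y)$ (and hence $\HSf_{\bullet}(Y\hash M_k)$) is recovered from the $\Rin$-module $\HSf_{\bullet}(-Y)$ via Poincar\'e duality and the long exact sequence. This is too strong: the paper itself notes, just before introducing $\delta''$, that the effect of orientation reversal on $\delta'$ --- a quantity defined purely from the module structure of $\HSf_{\bullet}$ --- cannot be read off from the module structure alone, and that is incompatible with your claim (apply it with the roles of $Y$ and $-Y$ exchanged). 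What does transfer are the correction terms themselves, via $\alpha(-Y)=-\gamma(Y)$, $\beta(-Y)=-\beta(Y)$, $\gamma(-Y)=-\alpha(Y)$; and for the corollary one only needs the \emph{based} part of $\HSf_{\bullet}(Y)\otimes_{\Rin}M_k$ together with the information of which classes $\langle Q,\x,V^k\rangle$ are based and in which degrees, not the whole module.

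Second, the heart of the matter --- why the basedness of $\langle Q,\x,V^k\rangle$ is a module-theoretic condition on $\HSf_{\bullet}(-Y)$ --- is exactly the step you defer to ``bookkeeping in this duality,'' and the mechanism you propose (dualizing Theorem \ref{Massey} through the Gysin triangle and the $U$-action) is not the one that does the work. The paper's argument is a naturality argument: since $V^k\x=0$ forces $p_*(\x)=0$, exactness gives $\x=j_*(\x')$ for some $\x'\in\HSt_{\bullet}(Y)\cong\HSf_{\bullet}(-Y)$; if $Q\x'=V^k\x'=0$, then naturality of Massey products under the module map $j_*$ yields $\langle Q,\x,V^k\rangle=j_*\langle Q,\x',V^k\rangle$, which lies in $\ker p_*$ and is therefore unbased. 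Hence a based Massey product forces, after adjusting the lift $\x'$, an element of $\HSf_{\bullet}(-Y)$ with $V^k\x'=0$ and $Q\x'$ in the tower --- a purely module-theoretic condition --- and conversely. Without this naturality step your argument does not close; note also that you do not need to compute the value of the Massey product (which your duality scheme would require), only to decide whether it is based.
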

\begin{proof}
We need to understand the elements $\x\in \HSf_{\bullet}(Y)$ for which $\y=\langle Q,\x,V^k\rangle$ is based. As of course $p_*(\x)=0$, we have $\x=j(\x')$ for $\x'\in \HSt_{\bullet}(Y)$. If both $Q\x'=V^k\x'=0$, then we would have by naturality of Massey products
\begin{equation*}
\y=\langle Q,\x,V^k\rangle=\langle Q,j_*(\x'),V^k\rangle=j_*(\langle Q,\x',V^k\rangle),
\end{equation*}
so $\y$ would not be based. So one of the two products is non-zero, and in fact, after changing the choice of $\x'$ mapping to $\x$, we see that $V^k\x'=0$ and $Q\x'$ belongs to the tower. Hence the existence of a Massey product to a based element implies the existence of such an element in $\HSt_{\bullet}(Y)=\HSf_{\bullet}(-Y)$; the converse follows in an analogous fashion, see also the proof of Lemma \ref{-Mn}.
\end{proof}
In fact, the proof of the corollary implies that one can in principle write a (not particularly illuminating) formula for the  correction terms of $Y\hash M_k$ in terms of the $\Rin$-module structure of $\HSf(-Y)$
\\
\par
The key computation behind Proposition \ref{Tor1} are the following two general lemmas.
\begin{lemma}\label{ract}
Suppose $\x r=\x s=t\x=0$ and $r\y+s\z=0$, and consider the element $\x |r|\y+\x |s|\z\in \Tor^{\Rin}_{1,*}$. If it survives in the $E^{\infty}$-page, multiplication by $t$ sends it to $\langle t,\x ,r\rangle|\y+\langle r,\x ,s\rangle|\z$.
\end{lemma}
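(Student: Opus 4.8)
The plan is to prove the identity by an explicit chain-level computation in the tensor complex $\mathcal{N}\Atim\mathcal{M}$, tracking how multiplication by $t$ interacts with the filtration $\{F_N\}$. Fix cycles representing $\x,r,\y,s,\z$ and write $\xi=x|r|y+x|s|z$ for the cycle representing the given class, which lives in $F_1$. Since $t\x=0$, the leading term of $t\cdot\xi$ is a boundary, so $t\cdot\xi$ will drop into $F_0$; the whole point is to identify its image in $E^\infty_{0,*}$. Recalling that the $\Rin$-module structure on the connected sum is induced by the $\Ainf$-bimodule operations $m_{i,j}$ on $\hat{C}^{\jmath}_{\bullet}(Y)$, I would realize multiplication by $t$ at the chain level through the left insertion
\[
t\cdot(x|a_1|\cdots|a_n|\y)=\sum_{i=0}^{n}m_{2,i+1}(t|x|a_1|\cdots|a_i)|a_{i+1}|\cdots|a_n|\y,
\]
where $m_{2,1}(t|x)=t\cdot x$. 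Before expanding, I would fix primitives $u,w,w'$ with $m_1(u)=t\cdot x$, $m_1(w)=x\cdot r$, $m_1(w')=x\cdot s$, which exist precisely because $t\x=\x r=\x s=0$.

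Expanding $t\cdot\xi$, the $i=0$ contributions are $(t\cdot x)|r|y+(t\cdot x)|s|z=m_1(u)|r|y+m_1(u)|s|z$, which sit in filtration $1$, while the $i=1$ contributions $m_{2,2}(t|x|r)|y+m_{2,2}(t|x|s)|z$ already sit in filtration $0$. To cancel the filtration-$1$ part I would add the tensor-product boundary $\partial(u|r|y+u|s|z)$: since $m_1(u)=t\cdot x$, its leading terms reproduce $(t\cdot x)|r|y+(t\cdot x)|s|z$, and the remaining terms $(u\cdot r)|y+(u\cdot s)|z+u|(r\cdot y)+u|(s\cdot z)$ all lie in $F_0$. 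After this modification $t\cdot\xi$ is represented by a chain supported entirely in filtration $0$, confirming that $t$ lowers the filtration.

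The core step is to recognize the resulting $F_0$ chain as a sum of triple Massey products. Grouping by $\y$ and by $\z$, the coefficient of $y$ is $m_{2,2}(t|x|r)+u\cdot r$ and that of $z$ is $m_{2,2}(t|x|s)+u\cdot s$, together with a residual term $u|(r\cdot y+s\cdot z)$. Comparing with the definition of the triple Massey product, whose representative for $\langle t,\x,r\rangle$ is $t\cdot w+u\cdot r+m_{2,2}(t|x|r)$ (and similarly for $\langle t,\x,s\rangle$ using $w'$), each coefficient agrees with the Massey representative up to the single term $t\cdot w$ (resp.\ $t\cdot w'$). Thus $t\cdot\xi$ is represented by $\langle t,\x,r\rangle|\y+\langle t,\x,s\rangle|\z$ plus the error $(t\cdot w)|y+(t\cdot w')|z+u|(r\cdot y+s\cdot z)$, one triple product being contributed symmetrically by each of the two summands; this is the formula I would record as the content of the lemma.

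The hard part, as usual in these staircase arguments, is to show that this error vanishes in $E^\infty_{0,*}$. I would use three inputs: the relation $r\y+s\z=0$, which makes $r\cdot y+s\cdot z=m_1(\nu)$ a boundary in $\mathcal{M}$ and hence $u|(r\cdot y+s\cdot z)$ a genuine boundary in $F_0$; the cocycle condition for $\xi$, namely $(x\cdot r)|y+(x\cdot s)|z+x|(r\cdot y+s\cdot z)=0$, which constrains how $w,w'$ interact with $y,z$; and the description of $E^\infty_{0,*}$ as the quotient of $\Tor^{\Rin}_{0,*}$ by the Massey indeterminacies (the images $\mathrm{Im}(t)$ and of multiplication by $r,s$) together with the images of the higher differentials landing in filtration $0$. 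The leftover pieces $(t\cdot w)|y$ and $(t\cdot w')|z$ are exactly what is needed to close $u\cdot r+m_{2,2}(t|x|r)$ (resp.\ the $s$-analogue) into the Massey cocycle, so they are absorbed into this indeterminacy. The hypothesis that $\xi$ survives to $E^\infty$ is what guarantees that the primitives $u,w,w',\nu$ can be chosen coherently and that the right-hand side is a well-defined element of $E^\infty_{0,*}$; once this bookkeeping is carried out the stated identity follows.
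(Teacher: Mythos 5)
Your overall strategy is the same as the paper's (fix chain-level primitives, multiply by $t$, add explicit boundaries to push the result into filtration $0$, and recognize the Massey representatives), but there is a genuine gap at the very first step that propagates into an unjustified final step. The chain $\xi=\x|r|\y+\x|s|\z$ is \emph{not} a cycle in the tensor complex: its differential is $(\x r)|\y+(\x s)|\z+\x|(r\y+s\z)$, which vanishes only in homology, not at the chain level. You invoke this "cocycle condition" as if it held on the nose, but the hypotheses only give you primitives $w,w',\nu$ for these products. Consequently "multiplication by $t$ applied to $\xi$" does not compute the module action on the homology class; you must first correct $\xi$ to an honest cycle. The paper does exactly this: it works with $\x|r|\y+\x|s|\z+a|\y+b|\z+\x|c$ (where $\partial a=\x r$, $\partial b=\x s$, $\partial c=r\y+s\z$), and then the product with $t$ produces the extra terms $ta|\y+tb|\z+t\x|c$ \emph{for free}. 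The terms $ta|\y$ and $tb|\z$ are precisely your "leftover pieces" $(t\cdot w)|\y$ and $(t\cdot w')|\z$, i.e.\ the missing summands of the Massey cocycles, while $t\x|c$ cancels against $\partial(d|c)$ together with the $d|(r\y+s\z)$ term; the outcome is exactly $\bigl(ta+dr+m_3(t|\x|r)\bigr)|\y+\bigl(tb+ds+m_3(t|\x|s)\bigr)|\z$ with no remainder.

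Because you omit the correction, you end up with the error $(t\cdot w)|\y+(t\cdot w')|\z+u|(r\y+s\z)$ and then try to discard it as "Massey indeterminacy." This does not work: $t\cdot w$ is $t$ times a \emph{chain}, not a cycle, so $(t\cdot w)|\y$ is not an element of $\mathrm{Im}(t)$ in $E^\infty_{0,*}$ — indeed it is not even a cycle, and neither is your "coefficient" $u\cdot r+m_{2,2}(t|\x|r)$ on its own. You are simultaneously using $t\cdot w$ to complete the coefficient into the Massey cocycle and declaring it negligible, which over $\ztwo$ amounts to changing the answer by exactly that term. The fix is simply to start from the corrected cycle, at which point the error terms become part of the computation rather than something to be argued away, and the stated identity drops out with no appeal to indeterminacy or to the survival hypothesis beyond well-definedness.
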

\begin{proof}
Suppose we have fixed cycles representing the homology classes $\x,\y,\z, r ,s$, which we denote with the same letter. Choose chains $a,b,c$ such that
\begin{equation*}
\partial a= \x r,\quad \partial b=\x s,\quad \partial c=r\y+s\z,\quad \partial d=t\x.
\end{equation*}
Then
\begin{equation*}
\x |r|\y+\x |s|\z+a|\y+b|\z+\x|c
\end{equation*}
is a cycle in the $\Ainf$-tensor product whose image in the $E^2$-page is the class $\x |r|\y+\x |s|\z$. By definition, the action of $t$ on it is given by the cycle
\begin{equation*}
t\x |r|\y+t\x |s|\z+ta|\y+tb|\z+t\x|c+m_3(t|\x|r)|\y+m_3(t|\x|s)|\z.
\end{equation*}
Now, we have the identities
\begin{align*}
\partial(d|r|\y+d|s|\z)&=t\x |r|\y+t\x |s|\z+dr|\y+d|r\y+ds|\z+d|s\z\\
\partial (d|c)&=t\x|c+d|r\y+d|s\z
\end{align*}
so that summing all the three equations we see that a representative of the action by $t$ is
\begin{equation*}
(ta+dr+m_3(r|\x|r))|\y+(tb+ds+m_3(r|\x|s))|\z,
\end{equation*}
hence the result.
\end{proof}

The second lemma is the following.

\begin{lemma}
Suppose that $r\x=0$. Then $\langle r,\x,r\rangle=0$.
\end{lemma}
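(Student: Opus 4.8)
The plan is to produce a representative of $\langle r,\x,r\rangle$ lying in $\mathrm{Im}(r)$, the image of multiplication by $r$ on $\HSf_{\bullet}(Y,\spin)$. Since the same element $r$ occupies both outer slots and $\Rin$ is commutative, the indeterminacy of this triple product is exactly $r\cdot\HSf_{\bullet}(Y,\spin)=\mathrm{Im}(r)$; hence exhibiting a single such representative forces $\langle r,\x,r\rangle=0$ in the quotient where the product is defined.

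First I would unwind the definition at the chain level. As $r\x=0$, choose a chain $P\in\Cj(Y,\spin)$ with $\partial P=m_{2,1}(r|\x)$. Working over $\ztwo$ and using that $\Rin$ is commutative, the left and right products $m_{2,1}(r|\x)$ and $m_{1,2}(\x|r)$ represent the same class, so (up to adjusting $P$) I may take the same chain $P$ to bound both; a representative of the Massey product is then
\[
Z=m_{1,2}(P|r)+m_{2,1}(r|P)+m_{2,2}(r|\x|r).
\]
That $Z$ is closed is the routine part: the two boundary strata of the bimodule operation $m_{2,2}(r|\x|r)$ are precisely $m_{1,2}(m_{2,1}(r|\x)|r)$ and $m_{2,1}(r|m_{1,2}(\x|r))$, which by the choice of $P$ equal $m_{1,2}(\partial P|r)=\partial m_{1,2}(P|r)$ and $m_{2,1}(r|\partial P)=\partial m_{2,1}(r|P)$, and these cancel $\partial Z$ over $\ztwo$.

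The heart of the argument is the reflection symmetry exchanging the two identical inputs $r$. Geometrically, $m_{2,2}(r|\x|r)$ is defined by a one-parameter family of metrics and perturbations parametrized by an associahedron $K$ (an interval), whose reflection swaps the two incoming $S^3$-ends; as both ends carry the same chain $r$, this reflection lifts to an involution of the parametrized moduli spaces. Choosing the family reflection-invariant, the contributions to $m_{2,2}(r|\x|r)$---together with the two outer terms $m_{1,2}(P|r)$ and $m_{2,1}(r|P)$, which the flip interchanges---are permuted in pairs over $\ztwo$, leaving only genuine multiples of $r$; this places $[Z]$ in $\mathrm{Im}(r)$ and yields the claim. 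I expect this last step to be the main obstacle: symmetric triple Massey products need not vanish for formal reasons (they are already nonzero for polynomial generators), so the proof must genuinely exploit the left--right symmetry of the $\Pin$-monopole construction. The delicate points are arranging the family of metrics to be reflection-invariant and checking that the induced involution contributes no fixed strata in the relevant degree, after which the mod $2$ cancellation is immediate.
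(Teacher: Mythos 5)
Your setup is fine: after arranging the left and right module structures to agree at the chain level (so that a single chain $P$ bounds both $m_{2,1}(r|\x)$ and $m_{1,2}(\x|r)$, and the two outer terms of $Z$ cancel over $\ztwo$), the lemma reduces to showing that $m_{2,2}(r|\x|r)$ is a boundary up to the indeterminacy. The gap is in the step you yourself flag as ``the main obstacle.'' You propose to choose the one-parameter family of metrics defining $m_{2,2}$ invariant under the reflection exchanging the two $S^3$-ends and to cancel moduli-space contributions in pairs mod $2$. But that reflection fixes the midpoint of the parametrizing interval, where the induced involution on the parametrized moduli space can have fixed solutions; equivariant perturbations need not achieve transversality there; and nothing in your argument identifies the surviving fixed-locus contribution, let alone shows it is a multiple of $r$. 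Since, as you correctly observe, symmetric triple products $\langle a,b,a\rangle$ do not vanish for formal reasons, this unproved cancellation is exactly where the entire content of the lemma lives, so the proof as written is incomplete.

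The paper's argument sidesteps the fixed-point problem. It deduces the lemma as the special case $s=r$ of the identity $\langle \x|r,s\rangle=\langle r,\x,s\rangle$ (Lemma~\ref{compound}), proved by filling a hexagonal family of metrics and perturbations on the doubly punctured $I\times Y$: the three thick boundary edges produce $m_3(\x|r|s)$, $m_3(\x|s|r)$ and $m_{2,2}(r|\x|s)$, the top thin edge produces $\x c$ with $c$ the commutativity homotopy for $\mu_2$ on $\hat{C}^{\jmath}_{\bullet}(S^3)$, and the remaining thin edges give degenerate chains that vanish in the complex; filling the (contractible) hexagon exhibits the sum of these strata as a boundary. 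Setting $s=r$, the two $m_3$-terms cancel over $\ztwo$ and the vanishing of $\langle r,\x,r\rangle$ follows, with no invariant family and no free involution required: the $r\leftrightarrow s$ symmetry you are trying to exploit acts by permuting \emph{distinct} boundary faces of the hexagon rather than acting on a single moduli space. If you want to salvage your approach, that hexagon is the correct implementation of your reflection heuristic.
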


The proof of this lemma is a special case of a more general result involving generalized Massey products that arise when studying higher differentials of elements which are not represented by simple tensors (see Lemma \ref{d2} and the following discussion). The simplest case is the following. Let $\x$ and consider two classes $r,s$ with $\x r=\x s=0$, where we do \textit{not} assume $rs=0$. Choose chains $a,b,c$ such that
\begin{equation*}
\partial a=\x r,\quad \partial b= \x s,\quad \partial c=rs+sr.
\end{equation*}
Then the following expression
\begin{equation}\label{compoundchain}
as+br+\x c+m_3(\x|s|r)+m_{2,2}(\x|r|s)
\end{equation}
is a cycle, and we define its homology class to be the \textit{generalized Massey product} $\langle\x |r,s\rangle$. We then have the following.
\begin{lemma}\label{compound}
Given classes $\x,r$ and $s$ as above, we have $\langle\x |r,s\rangle=\langle r,\x,s\rangle$.
\end{lemma}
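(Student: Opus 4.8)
The plan is to verify the identity directly at the chain level, by writing an explicit cycle representing each side and exhibiting a chain in $\mathcal{M}$ bounding their sum; since we work over $\ztwo$ all signs disappear, so there is nothing to track beyond the combinatorics of the $\Ainf$-operations. Fix cycles representing $\x,r,s$. By definition the generalized product $\langle\x|r,s\rangle$ is represented by $m_2(a|s)+m_2(b|r)+m_2(\x|c)+m_3(\x|s|r)+m_{2,2}(\x|r|s)$, with $\partial a=\x r$, $\partial b=\x s$, $\partial c=rs+sr$. For the ordinary triple product $\langle r,\x,s\rangle$ I would use the standard representative in the $\Ainf$-bimodule $\mathcal{M}$, namely $m_2(u|s)+m_{2,1}(r|v)+m_{2,2}(r|\x|s)$, where $\partial u=r\x$, $\partial v=\x s$, and $m_{2,1}$, $m_{2,2}$ denote the left action and the one-algebra-input-on-each-side bimodule operation. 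Note that $\langle r,\x,s\rangle$ is defined because $r\x=\x r=0$ in homology, the products being commutative.

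The comparison rests on choosing the auxiliary data compatibly. First I would take $v=b$, so that $\x s$ is nulled in the same way on both sides. The essential difference between the two products is how the $r$-action on $\x$ enters: the generalized product uses the right action $\x r$, while the ordinary product uses the left action $r\x$. I would bridge these with a chain homotopy $h$ witnessing the homotopy-commutativity of the two actions, $\partial h=\x r+r\x$ (the same phenomenon that produces the chain $c$, since $\partial c=rs+sr$ is exactly the homotopy-commutativity of $\mu_2$), and set $u=a+h$, a legitimate choice since then $\partial u=r\x$. With these choices the sum of the two representatives collapses to $m_2(h|s)+m_{2,1}(r|b)+m_2(b|r)+m_2(\x|c)+m_3(\x|s|r)+m_{2,2}(\x|r|s)+m_{2,2}(r|\x|s)$, and the plan is to show this is exact: the pair $m_{2,1}(r|b)+m_2(b|r)$ is a boundary up to the leftover $h(r,\partial b)=h(r,\x s)$ via the commutativity homotopy, while the $\Ainf$-module relation applied to the triple $(\x,r,s)$ and the Leibniz rule applied to $m_2(\x|c)$ reorganize the remaining higher-operation terms together with $c$ into the required bounding chain.

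The main obstacle will be purely organizational: correctly listing the $\Ainf$-module and bimodule relations that relate $m_2$, $m_3$, $m_{2,1}$, $m_{2,2}$ and the products $\mu_2,\mu_3$, and then matching every mixed term so that all contributions cancel in characteristic two. The conceptual content is concentrated in a single place, the passage from the right action $\x r$ to the left action $r\x$; once the commutativity homotopy and the chain $c$ are used to account for it, the rest is a finite cancellation. No indeterminacy issues arise, because all the relevant products vanish in homology and the auxiliary choices above are made consistently on both sides.
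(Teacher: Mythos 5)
Your plan cannot be completed as stated, because the identity you are after is not a formal consequence of the $\Ainf$-module and bimodule relations. Run your own cancellation to the end: after choosing $v=b$, using the commutativity homotopy, and applying the arity-three $\Ainf$-relations, what remains is (up to boundaries you have already accounted for) the cycle $\x c+m_3(\x|r|s)+m_3(\x|s|r)+m_{2,2}(r|\x|s)$. The $\Ainf$-relations tell you only that this expression is \emph{closed} --- its boundary is $(\x r)s+\x(rs)+(\x s)r+\x(sr)+(r\x)s+r(\x s)+\x(rs+sr)$, which cancels --- but no axiom relates the bimodule operation $m_{2,2}(r|\x|s)$ to the one-sided operations $m_3(\x|r|s)$ and $m_3(\x|s|r)$, so nothing forces this cycle to be \emph{exact}. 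Indeed one can modify $m_{2,2}$ by adding any cycle-valued trilinear operation without violating any $\Ainf$-bimodule relation, and doing so changes the homology class of this expression; so the statement genuinely depends on how the bimodule structure is constructed, not just on the fact that it is one. This is precisely the content my proof supplies geometrically: after arranging (by pulling back the metric data) that $\mu_{2,1}(d|\y)=\mu_{1,2}(\y|d)$ holds on the nose, I build a two-dimensional family of metrics and perturbations parametrized by a hexagon whose thick boundary edges produce $m_3(\x|r|s)$, $m_3(\x|s|r)$ and $m_{2,2}(r|\x|s)$, whose top edge produces $\x c$ via the chosen commutativity homotopy for $\mu_2$ on $\hat{C}^{\jmath}_{\bullet}(S^3)$, and whose remaining edges contribute degenerate chains that vanish in the complex; filling the hexagon (possible since the parameter space is contractible) yields the bounding chain. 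No purely algebraic manipulation produces that chain.

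Two smaller points. First, your auxiliary homotopy $h$ with $\partial h=\x r+r\x$ leaves the uncancelled term $h(r,\x s)$ that you mention but never dispose of; I avoid this entirely by making the left and right actions literally equal at the chain level, which is a geometric choice, not something available in an abstract bimodule. Second, the vanishing of the leftover ``small'' chains $I\times(\x r)s$ and $I\times(\x s)r$ is a feature of the specific geometric chain complex (non-degenerate $\delta$-chains), again not a formal fact. In short, the organizational difficulties you flag at the end are not the obstacle; the missing ingredient is a new family of metrics, and without it the argument does not close.
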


For example, for $-M_2$ the identity
\begin{equation*}
\langle \z|Q,V\rangle=\langle Q,\x,V\rangle=q^2
\end{equation*}
holds, see Lemma \ref{-Mn}.

\begin{proof}
Let us first spell out some details implicit in the construction of the $\Ainf$-bimodule structure in \cite{Lin4}. Suppose the family of metrics and perturbations has been chosen so that the $\Ainf$-module structure $\{m_n\}$ is defined (this implicitly takes as input an embedded ball in $Y$). The $\Ainf$-bimodule structure takes implicitly as input a second embedded ball, disjoint from the first. Then, by suitably pulling back (as in the construction of \cite{Lin4}) the metric used to define $\{m_n\}$ to our new data (defined by the second embedded ball), we see that we can assume that our multiplication safisfies $\mu_{2,1}(d|\y)=\mu_{1,2}(\y|d)$ for all choices of $d\in \hat{C}^{\jmath}_{\bullet}(S^3)$ and  $\y\in \hat{C}^{\jmath}_{\bullet}(Y)$. Notice that this does not imply that the multiplication $\mu_2$ on $\hat{C}^{\jmath}_{\bullet}(S^3)$ is commutative at the chain level (rather than just commutative up to homotopy), as the data on $S^3$ has been fixed a priori. 
The implication for our purposes is that, once this choice of data is made, we have that $\langle r,\x,s\rangle$
is represented by
\begin{equation*}
as+rb+m_3(r|\x|s),
\end{equation*}
where we are using the notation introduced above. To show that this cycle is cobordant to the one in Equation (\ref{compoundchain}), let us consider a family of metrics and perturbations on the manifold with cylindrical ends
\begin{equation*}
\left(I\times Y\setminus(  \mathrm{int}D^4\amalg\mathrm{int}D^4)\right)^*
\end{equation*}
parametrized by a hexagon as in Figure \ref{boundary}, and consider the chain obtained by taking fibered products on the incoming end with $\x, r$ and $s$. We provide a sketchy description of the metrics and perturbations involved - the details of the construction are very similar to those in \cite{Lin4}. The thick edges of the hexagon $\mathcal{H}$ correspond to stretching along the three pairs of hypersurfaces on the right of the figure, and taking fibered products one obtains the chains $m_3(\x|r|s), m_3(\x|s|r)$ and $m_{2,2}(r|\x|s)$ respectively. The top thin edge corresponds to a metric in which the top hypersurface is streched to infinity, and we perform a chain homotopy realizing the commutativity of $\mu_2$ on $\hat{C}^{\jmath}_{\bullet}(S^3)$; the corresponding chain is $c$. The bottom thin lines correspond to a chain homotopy between $m_{2,1}$ and $m_{1,2}$ with one of the diagonal hypersurfaces stretched to infinity; as discussed above we can choose such a chain homotopy to be induced by a isotopy of metrics, so that the chains in consideration will just be $I\times (\x r)s$ and $I\times (\x s)r$, hence they are zero in our chain complex (as they are small, see Section \ref{review}). The boundary of the hexagon can be filled with a family of metrics and perturbations of the manifold with cylindrical ends, as the corresponding space is contractible; taking the fibered product with the moduli spaces parametrized by $\mathcal{H}$, we obtain a chain whose boundary (from the discussion above) is $m_3(\x|r|s)+m_3(\x|s|r)+m_{2,2}(r|\x|s)+\x c$ and the result follows.
\end{proof}
\begin{figure}
  \centering
\def\svgwidth{0.9\textwidth}
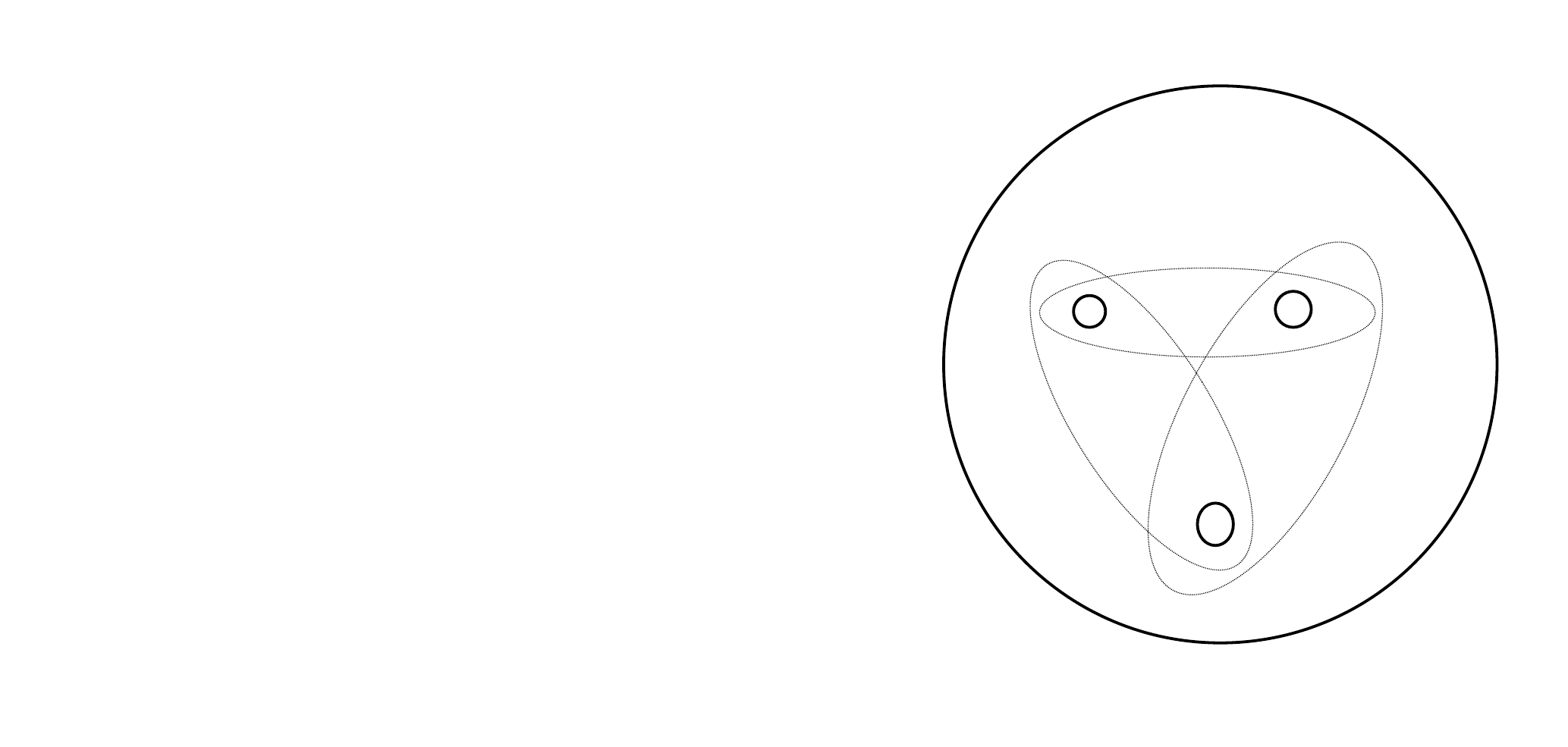
\caption{On the left, the hexagon $\mathcal{H}$ parametrizing the family of metrics and perturbations, where we have denoted the strata corresponding to each corner. On the right, we have depicted three hypersurfaces in a doubly punctured $I\times Y$; the top one is a copy of $S^3$ while the diagonal ones are copy of $Y$. Each pair determines a one parameter family of metrics and perturbations defining higher compositions $m_3$ or $m_{2,2}$.}
\label{boundary}
\end{figure} 

\begin{proof}[Proof of Proposition \ref{Tor1}]
Given the lemmas above, Proposition \ref{Tor1} follows immediately, with the additional observation for the last point that when $V\x=0$, $\x|V^k|qv^{-k}+\x|Q|1$ is also represented by $\x|V|qv^{-1}+\x|Q|1$, as they differ by the boundary of $\x|V|V^{k-1}|qv^{-k}$.
\end{proof}
\vspace{0.3cm}

\textit{Connected sums with $-M_{2k}$. }We now discuss the effect of connected sums with manifolds of type $-M_{2k}$, i.e. manifolds obtained by manifolds of type $M_{2k}$ by reversing the orientation (see Section \ref{examples}). Of course, the correction terms of $Y\hash-M_{2k}$ are determined by the correction terms of $-(Y\hash-M_{2k})=-Y\hash M_{2k}$, so that Corollary \ref{onlyR} implies in turn that they are determined entirely by the module structure of $\HSf_{\bullet}(Y)$. We will explain this fact by a direct inspection of the Eilenberg-Moore spectral sequence.
\\
\par
For simplicity, we will start with the case $k=1$, in which case $-M_2=\ztwo\oplus N_2$ as $\Rin$-modules. A convenient projective resolution for the trivial module $\ztwo$ was defined in Section \ref{projres}. The module $N$ also admits a simple $2$-periodic projective resolution
\begin{equation*}
N_2\stackrel{d_0}{\longleftarrow}\Rin\oplus \Rin\stackrel{d_1}{\longleftarrow} \Rin\oplus\Rin\stackrel{d_2}{\longleftarrow} \Rin\oplus\Rin\stackrel{d_3}{\longleftarrow} \Rin\oplus\Rin\stackrel{d_4}{\longleftarrow}\cdots
\end{equation*}
where, in matrix notation, and for $i\geq 0$ we have
\begin{equation*}
d_i=
\begin{cases}
\begin{bmatrix}
Q & 0\\
V & Q^2
\end{bmatrix}& \text{ if }i\text{ is even}\\
\begin{bmatrix}
Q^2 & 0\\
V & Q
\end{bmatrix}& \text{ if }i\text{ is odd}
\end{cases}
\end{equation*}
Here $d_0$ sends $(1,0)$ to $v$ and $(0,1)$ to $q^2$.
\par
As usual $\mathrm{Tor}^{\Rin}_{0,\ast}(M,N_2)=M\otimes_{\Rin}N_2$. Then the general description of generators of $\mathrm{Tor}^{\Rin}_{i,\ast}$, as given in Section \ref{projres}, specializes to our case as follows. Define
\begin{equation*}
\tilde{Q}_n=\begin{cases}
(Q^2,Q,\cdots, Q,Q^2)&\text{ if }n\text{ is odd}\\
(Q,Q^2,\cdots, Q,Q^2)&\text{ if }n\text{ is even}.
\end{cases}
\end{equation*}
\begin{lemma}\label{descrN2}
Let $n\geq 1$. Then every element of $\mathrm{Tor}^{\Rin}_{n,\ast}(M,N_2)$ has a representative of the form
\begin{equation*}
\x\lvert \tilde{Q}_n \lvert v+\x\lvert \mathrm{sh}(Q_{n-1},V)\lvert q^2+ \y\lvert Q_n\lvert q^2
\end{equation*}
where $d_{n}(\x,\y)=0$.
\end{lemma}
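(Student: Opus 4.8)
The plan is to make the abstract comparison map $\{\varphi_i\}$ of Section~\ref{projres} completely explicit for the given two-periodic resolution of $N_2$, and then read off the representatives from the general recipe. Precisely, I would prove by induction on $n\geq 0$ the two formulas
\begin{equation*}
\varphi_{n+1}(e_1)=1\lvert\tilde{Q}_n\lvert v+1\lvert\mathrm{sh}(Q_{n-1},V)\lvert q^2,\qquad \varphi_{n+1}(e_2)=1\lvert Q_n\lvert q^2,
\end{equation*}
where $e_1,e_2$ is the standard basis of the copy of $\Rin\oplus\Rin$ in homological degree $n$, and with the convention that the empty tuples $Q_0,\tilde{Q}_0$ and the vacuous shuffle $\mathrm{sh}(Q_{-1},V)$ are trivial. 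Granting these, the recipe from Section~\ref{projres} says that a cycle $(\x,\y)\in M\oplus M$ with $d_n(\x,\y)=0$ maps under $\varphi$ to $\x\otimes\varphi_{n+1}(e_1)+\y\otimes\varphi_{n+1}(e_2)$; absorbing the leading factor $1$ into $\x$ (resp.\ $\y$) across $M\otimes_{\Rin}(\Rin\otimes\cdots)$ produces exactly the claimed expression. Since $\varphi$ is a quasi-isomorphism, it induces an isomorphism on homology, so every class of $\mathrm{Tor}^{\Rin}_{n,*}(M,N_2)$ — computed as $\ker(1\otimes d_n)/\mathrm{im}(1\otimes d_{n+1})$ — has such a representative, with cycle condition precisely $d_n(\x,\y)=0$.

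The base case $n=0$ is the definition $\x\mapsto 1\otimes d_0(\x)$, giving $\varphi_1(e_1)=1\lvert v$ and $\varphi_1(e_2)=1\lvert q^2$. For the inductive step I would feed the recursion $\varphi_{i+1}(e_j)=\sum_k 1\otimes(r_{jk}\cdot\varphi_i(e'_k))$ the columns of $d_i$, writing $c=Q$ when $i$ is even and $c=Q^2$ when $i$ is odd (and $c'$ for the opposite power), so that $d_i(e_1)=c\,e'_1+V\,e'_2$ and $d_i(e_2)=c'\,e'_2$. The $e_2$-formula collapses immediately to the combinatorial identity $c'\lvert Q_{i-1}=Q_i$, i.e.\ prepending the appropriate power of $Q$ advances the index of $Q_n$ by one, which is clear from the definition. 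The $v$-component of $\varphi_{i+1}(e_1)$ likewise reduces to $c\lvert\tilde{Q}_{i-1}=\tilde{Q}_i$, again immediate.

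The only genuinely delicate point — and the step I expect to be the crux — is the $q^2$-component of $\varphi_{i+1}(e_1)$: the recursion outputs $c\lvert\mathrm{sh}(Q_{i-2},V)+V\lvert Q_{i-1}$, and one must recognize this as $\mathrm{sh}(Q_{i-1},V)$. The key observation is that $Q_{i-1}=c\lvert Q_{i-2}$ (the same prepending identity used above), so splitting the shuffle sum $\mathrm{sh}(Q_{i-1},V)$ according to whether the inserted $V$ falls before or after the leading entry $c$ yields precisely $V\lvert Q_{i-1}+c\lvert\mathrm{sh}(Q_{i-2},V)$. Combining the $v$- and $q^2$-components closes the induction, and the description of $\mathrm{Tor}^{\Rin}_{n,*}(M,N_2)$ follows at once from the recipe recalled in the first paragraph.
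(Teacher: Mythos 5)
Your proposal is correct and follows exactly the route the paper intends: Lemma~\ref{descrN2} is stated as the specialization of the comparison map $\{\varphi_i\}$ from Section~\ref{projres} to the two-periodic resolution of $N_2$, and your induction (including the shuffle identity $\mathrm{sh}(Q_{i-1},V)=V\lvert Q_{i-1}+c\lvert\mathrm{sh}(Q_{i-2},V)$, which is indeed the only nontrivial step) just makes explicit the verification the paper leaves to the reader. The bookkeeping of which power of $Q$ is prepended at each stage matches the parities in the definitions of $Q_n$ and $\tilde{Q}_n$, so the argument closes as claimed.
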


The elements in $\mathrm{Tor}^{\Rin}_{*,*}(M,\ztwo)$ were described in detail in Example \ref{ztwotor}. Consider first an element in $\mathrm{Tor}^{\Rin}_{2,*}$, which has the form
\begin{equation*}
\alpha=\A \lvert Q\lvert V\lvert \z+\A \lvert V\lvert Q\lvert \z+\B \lvert Q^2\lvert Q\lvert \z
\end{equation*}
where $Q\A=0$ and $V\A+Q^2\B=0$. Such an element is zero if and only if $\A=Q^2\x$ and $\B=V\x+Q\y$. Give elements $\A,\B$ satisfying $Q\A=0$ and $V\A+Q^2\B=0$ we can define their generalized triple Massey product $\phi(\A,\B)$ as follows in the same way as Equation (\ref{compoundchain}). Then the differential on the $E^2$-page is identified with
\begin{align*}
d_2(\alpha)&=\phi(\A,\B)\lvert \z +\A\lvert (\langle Q, V, \z\rangle+\langle V, Q, \z\rangle)+\B\lvert \langle  Q^2, Q, \z\rangle\\
&=\phi(\A,\B)\lvert \z + \A\lvert v+\B\lvert q^2.
\end{align*}
where in the last row we used Lemmas \ref{-Mn} and \ref{compound}. This readily generalizes to the case of elements in $\mathrm{Tor}^{\Rin}_{n,*}$ with $n$ even. Specializing the discussion of Section \ref{projres}, a general element is of the form
\begin{equation*}
\alpha=\A\lvert \mathrm{sh}(Q_{n-1},V)\lvert \z+ \B\lvert Q_n\lvert \z
\end{equation*}
where $Q\A=0$ and $V\A+Q^2\B=0$, and such an element is zero if and only if $\A=Q^2\x$ and $\B=V\x+Q\y$. Its differential is then given by
\begin{align*}
d_2(\alpha)&=\langle \A,Q,Q^2\rangle\lvert \mathrm{sh}(Q_{n-3},V)\lvert \z+\phi(\A,\B)\lvert Q_{n-2}\lvert \z+\\
&+a\lvert \tilde{Q}_{n-2}\lvert v+\A\lvert \mathrm{sh}(Q_{n-3},V)\lvert q^2+ \B\lvert Q_{n-2}\lvert q^2 \numberthis \label{d2a}
\end{align*}
The key observation to have in mind is that this differential naturally decomposes in two parts: the one in the first row, which defines an element in $\mathrm{Tor}^{\Rin}_{n-2,*}(M,\ztwo)$, and the one in the second row in $\mathrm{Tor}^{\Rin}_{n-2,*}(M,N_2)$. In fact, the latter is the element corresponding to the pair $(\A,\B)$ from Lemma \ref{descrN2}.
\\
\par
An analogous description holds to the odd case too (provided of course that $n\geq 3$). For the sake of clarity, we first write down explicitly the case of $\mathrm{Tor}^{\Rin}_{3,*}(M,\ztwo)$. An element in this group has the form
\begin{equation*}
\beta=\x \lvert Q^2 \lvert Q\lvert V\lvert \z+\x \lvert Q^2\lvert V\lvert Q\lvert \z+\x\lvert V\lvert Q^2\lvert Q\lvert \z+\y\lvert Q \lvert Q^2\lvert Q\lvert \z
\end{equation*}
where $Q^2\x=0$ and $V\x+Q\y=0$. Such an element is zero if and only if $\x=Q\A$ and $\y=V\A+Q^2 \B$. To a pair $\x,\y$ like this we can assign a generalized triple Massey product as in Equation (\ref{compoundchain}). We then have (using again the computation in Lemma \ref{-Mn})
\begin{equation*}
d_2(\beta)=\langle \x,Q^2,Q\rangle\lvert V\lvert \z+\psi(\x,\y)\lvert Q\lvert \z+\x\lvert Q^2\lvert v+\x\lvert V\lvert q^2+\y\lvert Q\lvert q^2.
\end{equation*}
In the general case for $n$ odd, we have that the general element of $\mathrm{Tor}^{\Rin}_{n,*}(M,\ztwo)$ is
\begin{equation*}
\beta=\x\lvert \mathrm{sh}(Q_{n-1},V)\lvert \z+ \y\lvert Q_n\lvert \z,
\end{equation*}
where $\x$ and $\y$ are as above. We then have
\begin{align*}
d_2(\beta)&=\langle \x,Q^2,Q\rangle\lvert \mathrm{sh}(Q_{n-3},V)\lvert \z+\psi(\x,\y)\lvert Q_{n-2}\lvert \z+\\
&+\x\lvert \tilde{Q}_{n-2}\lvert v+\x\lvert \mathrm{sh}(Q_{n-3},V)\lvert q^2+\y\lvert Q_{n-2}\lvert q^2.\numberthis \label{d2b}
\end{align*}
Again, the first row is an element in $\mathrm{Tor}_{n-2,*}^{\Rin}(M,\ztwo)$, and the second row is the element in $\mathrm{Tor}_{n-2,*}^{\Rin}(M,N_2)$ corresponding to the pair $(\x,\y)$.
\\
\par
With this discussion in hand, we are ready to prove the following.
\begin{prop}\label{E3coll}
The Eilenberg-Moore spectral sequence for the connected sum with $-M_2$ collapses at the $E^3$-page. Furthermore, $E^{\infty}_{i,*}=0$ for $i\geq 2$, and all elements in $E^{\infty}_{1,*}$ have the form $\x|V|\z+\y|Q|\z$ for pairs $\x,\y$ of elements in $M$ such that $V\x+Q\y=0$. The action of $V$ and $Q$ on such an element are given respectively by $\x|v$ and $\y|q^2$.
\end{prop}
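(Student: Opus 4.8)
The plan is to exploit the decomposition $-M_2=\ztwo\oplus N_2$ of $\Rin$-modules, which splits the $E^2$-page as $\Tor^{\Rin}_{*,*}(M,\ztwo)\oplus\Tor^{\Rin}_{*,*}(M,N_2)$ with $M=\HSf_\bullet(Y)$, and to read off $d_2$ directly from the formulas (\ref{d2a}) and (\ref{d2b}). Writing $A_n=\Tor^{\Rin}_{n,*}(M,\ztwo)$ and $B_n=\Tor^{\Rin}_{n,*}(M,N_2)$, the crucial point already visible in those formulas is that $d_2$ carries an off-diagonal component $A_n\to B_{n-2}$ which, in terms of the parametrizing data of Example \ref{ztwotor} and Lemma \ref{descrN2}, is the identity $(\A,\B)\mapsto(\A,\B)$. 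This is forced by the fact that the minimal resolutions of $\ztwo$ and of $N_2$ are assembled from the very same alternating matrices $\begin{bmatrix}Q&0\\V&Q^2\end{bmatrix}$ and $\begin{bmatrix}Q^2&0\\V&Q\end{bmatrix}$, so that the cycle- and boundary-conditions cutting out $A_n$ and $B_{n-2}$ coincide verbatim.

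First I would record the consequence of this coincidence: for $n\geq 3$ the off-diagonal map $A_n\to B_{n-2}$ is an isomorphism, while for $n=2$ its target is $B_0=M\otimes_{\Rin}N_2$ and the same comparison of relations shows $(\A,\B)\mapsto\A|v+\B|q^2$ is injective, its kernel being exactly the boundary submodule $\{(Q^2a,\,Va+Qb)\}$ that already represents the zero class in $A_2$. I would then feed these maps into a Gaussian-elimination (cancellation) argument for $d_2$ on $A\oplus B$: cancel each isomorphism pair $A_n\cong B_{n-2}$ for $n\geq 3$, removing all of $A_{\geq 3}$ and all of $B_{\geq 1}$, and use the injectivity at $n=2$ to conclude $E^3_2=\ker(d_2|_{A_2})=0$. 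The one delicate point is that these cancellations introduce no correction terms among the surviving low-degree summands; this holds because $d_2$ lowers homological degree by exactly two, so no zig-zag runs from $A_0,A_1$ or $\operatorname{coker}(A_2\to B_0)$ through the cancelled high-degree part. The outcome is $E^3_{i,*}=0$ for $i\geq 2$ and $E^3_{1,*}=A_1=\Tor^{\Rin}_{1,*}(M,\ztwo)$, which by Example \ref{ztwotor} is exactly the set of classes $\x|V|\z+\y|Q|\z$ with $V\x+Q\y=0$.

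Collapse at $E^3$ is then immediate: $E^3$ lives in homological degrees $0$ and $1$, and every $d_r$ with $r\geq 3$ drops homological degree by at least three, so all higher differentials vanish and $E^\infty=E^3$. It remains to settle the extension problem, i.e.\ the $\Rin$-action on $E^\infty_{1,*}$. Here I would argue exactly as in Lemmas \ref{ract} and \ref{compound}: represent $\x|V|\z+\y|Q|\z$ by an honest cycle in the $\Ainf$-tensor product, multiply by the chain representing $V$ (respectively $Q$), and simplify modulo boundaries. The action drops filtration into $E^\infty_{0,*}\supseteq M\otimes_{\Rin}N_2$, and the surviving contribution is governed by the module Massey products $\langle V,\z,Q\rangle=v$ and $\langle\z,Q,Q^2\rangle=q^2$ of Lemma \ref{-Mn}, yielding $V\cdot(\x|V|\z+\y|Q|\z)=\x|v$ and $Q\cdot(\x|V|\z+\y|Q|\z)=\y|q^2$.

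I expect the main obstacle to be this last step rather than the collapse. The cancellation is essentially formal once one notices the coincidence of the two resolutions, but isolating at the chain level precisely which higher $\Ainf$-tensor terms survive, matching them against the specific triple products of Lemma \ref{-Mn}, and checking that the unwanted cross terms die against boundaries (as in the proof of Lemma \ref{compound}), is the genuinely delicate bookkeeping; this is the analogue for $-M_2$ of the computation behind Proposition \ref{Tor1}.
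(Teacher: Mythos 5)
Your proposal is correct and follows essentially the same route as the paper's own proof: the same splitting $-M_2=\ztwo\oplus N_2$, the same key observation (read off from (\ref{d2a}) and (\ref{d2b})) that the component of $d_2$ from $\Tor^{\Rin}_{n,*}(M,\ztwo)$ to $\Tor^{\Rin}_{n-2,*}(M,N_2)$ is the identity because the two minimal resolutions are built from the same matrices, and the same appeal to Lemmas \ref{ract} and \ref{-Mn} for the extension problem. The only difference is cosmetic: you package the acyclicity in homological degrees $\geq 2$ as a Gaussian-elimination cancellation (being, if anything, slightly more explicit than the paper about the endpoint, via the injectivity of $\Tor^{\Rin}_{2,*}(M,\ztwo)\to M\otimes_{\Rin}N_2$), whereas the paper writes $d_2$ as an upper-triangular two-by-two matrix with identity off-diagonal entry, uses $d_2^2=0$ to force the diagonal entries to agree and square to zero, and checks kernel equals image directly.
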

From this, it is clear as mentioned in the introduction that the correction terms of $Y\hash-M_2$ are determined entirely by the module structure of $Y$.
\begin{proof}
Set $M=\HSf_{\bullet}(Y)$. Let us consider first the part in degree $2n$ of the $E^2$-page, i.e.
\begin{equation*}
\mathrm{Tor}^{\Rin}_{2n,*}(M,-M_2)=\mathrm{Tor}^{\Rin}_{2n,*}(M,N_2)\oplus \mathrm{Tor}^{\Rin}_{2n,*}(\ztwo,\ztwo).
\end{equation*}
For $n\geq1$, both summands can be identified with
\begin{equation*}
V=\mathrm{ker}\begin{bmatrix}
Q^2 & 0\\
V & Q
\end{bmatrix}/\mathrm{im}\begin{bmatrix}
Q & 0\\
V & Q^2
\end{bmatrix}
\end{equation*} 
where we think the matrices as acting on $M^2$. Because of two periodicity, the part in even grading of the $(E^2,d_2)$ page can be rewritten as the complex
\begin{equation*}
\cdots \stackrel{d_2}{\longrightarrow} V\oplus V\stackrel{d_2}{\longrightarrow} V\oplus V\stackrel{d_2}{\longrightarrow} V\oplus V\stackrel{d_2}{\longrightarrow}\cdots \stackrel{d_2}{\longrightarrow}V\oplus V\stackrel{d_2}{\longrightarrow} V\oplus V.
\end{equation*}
When thought as a $2$-by-$2$ matrix,$d_2$ is upper triangular, and by (\ref{d2a}) above we see that has to be of the form
\begin{equation*}
d_2=\begin{bmatrix}
A & \mathrm{Id}_V\\
0 & B
\end{bmatrix}.
\end{equation*}
Imposing $d_2^2=0$ we also obtain $A^2=0$ and $A=B$, so that
\begin{equation*}
d_2=\begin{bmatrix}
A & \mathrm{Id}_V\\
0 & A
\end{bmatrix}.
\end{equation*}
We want to show that $d_2$ is exact. Suppose $d_2(\x,\y)=0$. Hence, $A\x=\y$ and $A\y=0$. Then
\begin{equation*}
d_2(0,\x)=\begin{bmatrix}
A & \mathrm{Id}_V\\
0 & A
\end{bmatrix}
\begin{bmatrix}
0\\
\x
\end{bmatrix}=\begin{bmatrix}
\x\\
\y
\end{bmatrix}
\end{equation*}
and the result follows.
\par
The analogous argument holds for the odd part, as for each $n\geq1$
\begin{equation*}
\mathrm{Tor}^{\Rin}_{2n+1,*}(M,-M_2)=\mathrm{Tor}^{\Rin}_{2n+1,*}(M,N_2)\oplus \mathrm{Tor}^{\Rin}_{2n+1,*}(\ztwo,\ztwo).
\end{equation*}
each of the summands on the right can be identified with
\begin{equation*}
W=\mathrm{ker}\begin{bmatrix}
Q & 0\\
V & Q^2
\end{bmatrix}/\mathrm{im}\begin{bmatrix}
Q^2 & 0\\
V & Q
\end{bmatrix}
\end{equation*} 
and $d_2$ has an analogous shape (see equation (\ref{d2b}). Finally, the $\Rin$-module structure can be easily computed using Lemma \ref{-Mn} and \ref{ract}. 
\end{proof}

The general case of $M_{2k}$ can be derived with few modifications, so that in particular the analogue of Proposition \ref{E3coll} also holds. Let us discuss the key points involved. Recall that as $\Rin$-modules, $M_{2k}=N_{2k}\oplus \ztwo[V]/V^k$. The module $\ztwo[V]/V^k$ has minimal free resolution
\begin{equation*}
\ztwo[V]/V^k\stackrel{d_0}{\longleftarrow} \Rin\stackrel{d_1}{\longleftarrow} \Rin\oplus\Rin\stackrel{d_2}{\longleftarrow} \Rin\oplus\Rin\stackrel{d_3}{\longleftarrow} \Rin\oplus\Rin\stackrel{d_4}{\longleftarrow}\cdots
\end{equation*}
where, in matrix notation, $d_1=(V^k,Q)$, and for $i\geq 2$ we have
\begin{equation*}
d_i=
\begin{cases}
\begin{bmatrix}
Q & 0\\
V^{k} & Q^2
\end{bmatrix}& \text{ if }i\text{ is even}\\
\begin{bmatrix}
Q^2 & 0\\
V^{k} & Q
\end{bmatrix}& \text{ if }i\text{ is odd}
\end{cases}
\end{equation*}
while $N_{2k}$ has minimal projective resolution
\begin{equation*}
N_{2k}\stackrel{d_0}{\longleftarrow}\Rin\oplus \Rin\stackrel{d_1}{\longleftarrow} \Rin\oplus\Rin\stackrel{d_2}{\longleftarrow} \Rin\oplus\Rin\stackrel{d_3}{\longleftarrow} \Rin\oplus\Rin\stackrel{d_4}{\longleftarrow}\cdots
\end{equation*}
where, in matrix notation, and for $i\geq 0$ we have
\begin{equation*}
d_i=
\begin{cases}
\begin{bmatrix}
Q & 0\\
V^{n+1} & Q^2
\end{bmatrix}& \text{ if }i\text{ is even}\\
\begin{bmatrix}
Q^2 & 0\\
V^{n+1} & Q
\end{bmatrix}& \text{ if }i\text{ is odd}.
\end{cases}
\end{equation*}
Here $d_0$ sends $(1,0)$ to $v^n$ and $(0,1)$ to $q^2$. The result then follows from the identities from Section \ref{examples}, $\langle V^{n},\z,Q\rangle=v^n$ and $\langle \z,Q,Q^2\rangle=q^2$, in the same way as for $-M_2$.
\\
\par
\textit{The odd case.} We have seen in the previous section that connected sums with manifolds of type $\pm M_{2k}$ are rather simple to understand. We will discuss the case in which $n=2k+1$ is odd. Let us first discuss a suitable minimal projective resolution.
\begin{equation*}
M_k\stackrel{\epsilon}{\longleftarrow}\Rin\oplus \Rin\stackrel{d_0}{\longleftarrow} \Rin\oplus\Rin\stackrel{d_1}{\longleftarrow} \Rin\oplus\Rin\stackrel{d_2}{\longleftarrow} \Rin\oplus\Rin\stackrel{d_3}{\longleftarrow}\cdots
\end{equation*}
where
\begin{equation*}
d_i=
\begin{cases}
\begin{bmatrix}
Q & 0\\
V^{n+1} & Q^2V^n
\end{bmatrix}& \text{ if }i=0\\
\begin{bmatrix}
Q & 0\\
V & Q^2
\end{bmatrix}& \text{ if }i\text{ is even}\\
\begin{bmatrix}
Q^2 & 0\\
V & Q
\end{bmatrix}& \text{ if }i\text{ is even}
\end{cases}
\end{equation*}
and $d_0$ maps $(1,0)$ to $qv^{n+1}$ and $(0,1)$ to $1$. The computation of the $E^2$ page, from the general description of Section \ref{projres}, is the following.
\begin{lemma}\label{torodd}
Let $n$ be odd. Then the generators of $\mathrm{Tor}_{n,*}(M,N_{1})$ have representatives of the form
\begin{equation*}
\x|Q_n|v+\mathrm{sh}(\tilde{Q}_{n-1},V)|q+\y|\tilde{Q}_n|q.
\end{equation*}
for each $Q\x=V\x+Q^2\y=0$. Let $n$ be even. Then the generators of $\mathrm{Tor}_{n,*}(M,N_1)$ have representatives of the form
\begin{equation*}
\x|\tilde{Q}_n|v+\x|\mathrm{sh}(Q_{n-1},V)|q+\y|Q_n|q.
\end{equation*}
The description in the case of $N_{2k+1}$ with $k\geq 1$ is analogous.
\end{lemma}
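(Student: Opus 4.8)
The plan is to apply verbatim the translation between a minimal free resolution and the bar resolution set up in Section \ref{projres}, exactly as was done for $\Tor^{\Rin}_{*,*}(M,\ztwo)$ in Example \ref{ztwotor} and for $N_2$ in Lemma \ref{descrN2}. Recall that $\Tor^{\Rin}_{n,*}(M,N_1)$ is the homology in position $n$ of the complex $M^2\leftarrow M^2\leftarrow\cdots$ obtained by tensoring the displayed minimal resolution of $N_1$ with $M$ over $\Rin$; hence, for $n\geq 1$, a class is represented by a pair $(\x,\y)\in M^2$ in the kernel of the relevant differential. For $n$ odd the relevant matrix is the even-indexed one $\left[\begin{smallmatrix} Q & 0 \\ V & Q^2\end{smallmatrix}\right]$, giving the condition $Q\x=V\x+Q^2\y=0$ recorded in the statement, while for $n$ even it is the odd-indexed matrix $\left[\begin{smallmatrix} Q^2 & 0 \\ V & Q\end{smallmatrix}\right]$.

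First I would run the inductive construction of the quasi-isomorphism $\{\varphi_i\}$ from Section \ref{projres} on this specific resolution. The base case records $\varphi_1$ on the two free generators via the augmentation, sending them to the two module generators of $N_1$ (written $v$ and $q$ in the statement); the inductive step $\varphi_{i+1}(e_j)=\sum 1\otimes(r_{jk}\cdot\varphi_i(e_k'))$ is then driven entirely by the two-periodic matrices above. The diagonal $Q$ and $Q^2$ entries assemble the alternating tuples $Q_n$ and $\tilde{Q}_n$ on the nose, and the single off-diagonal $V$ entry, once commuted leftward past the already accumulated string of $Q$'s, produces exactly the insertions recorded by $\mathrm{sh}(\cdot,V)$. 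This is the very same mechanism that produced the shuffle terms in the computation of $\Tor^{\Rin}_{*,*}(\ztwo,\ztwo)$ through the Eilenberg--Zilber map; splitting $\varphi_n$ according to which free generator the off-diagonal jumps land on yields one $v$-component carrying a bare tuple and two $q$-components carrying, respectively, the shuffle term and a bare tuple, which is the claimed form, the parity of $n$ deciding whether $Q_n$ or $\tilde{Q}_n$ appears in each slot.

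The step I expect to demand the most care is precisely the parity bookkeeping: keeping straight which of $Q_n$ and $\tilde{Q}_n$ is attached to the $v$- and to the $q$-generator, and checking that the iterated off-diagonal entries contribute each $V$-insertion once and only once, so that they combine to a single shuffle rather than an over- or under-counted collection. By two-periodicity this reduces to verifying one even and one odd base case and then the two-step inductive pass, together with treating by hand the lowest values of $n$, where the augmentation-level matrix $\left[\begin{smallmatrix} Q & 0 \\ V^{n+1} & Q^2V^n\end{smallmatrix}\right]$ differs from the periodic ones; these low cases introduce no new phenomenon. Finally, since the minimal resolution of $N_{2k+1}$ for $k\geq 1$ has the identical two-periodic shape, with higher powers of $V$ confined to the augmentation level, the same induction applies verbatim and produces the analogous representatives, completing the reduction of the general odd case to the computation for $N_1$.
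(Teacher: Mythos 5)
Your proposal is correct and takes essentially the same route as the paper: the paper offers no separate argument for this lemma, simply asserting that it follows ``from the general description of Section \ref{projres}'', i.e.\ from the inductive quasi-isomorphism $\{\varphi_i\}$ between the minimal free resolution and the bar resolution, applied exactly as in Example \ref{ztwotor} and Lemma \ref{descrN2}. Your explicit attention to the parity bookkeeping for $Q_n$ versus $\tilde{Q}_n$ and to the non-periodic augmentation-level differential is more detail than the paper records, but it is the same computation.
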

The description of the $E^{\infty}$ page is not as straightforward as in the even case. While the differential on $E^2$ can be described as in the even case in terms of certain generalized Massey products, the spectral sequence does non collapse at the $E^3$-page in general. On the other hand, because the relation
\begin{equation*}
\langle QV^i,Q^2V^j,QV^,Q^2V^\rangle=V^{1+i+j+k+l}
\end{equation*}
holds, it can be shown that the spectral sequence collapses at the $E^4$ page (see Example \ref{FF}). Rather than discussing the quite involved general theory, let us work out in detail a specific example that enlightens the key aspects of the computation.

\begin{example}\label{M1M1}
Let us revisit the example of the connected sum $M_1\hash M_1$ discussed in \cite{Lin4} from our new perspective. The $E^2$ page of the Eilenberg-Moore spectral sequence is depicted below. Here, starting from the left the $i$th column represents $\mathrm{Tor}^{\Rin}_{i,*}$, with the element on the top left having bidegree $(0,2)$. The picture repeats $2$-periodically to the right as in Example \ref{FF}.
\begin{center}
\begin{tikzpicture}
\matrix (m) [matrix of math nodes,row sep=0.1em,column sep=0.9em,minimum width=0.1em]
{\ztwo \\
\cdot&\ztwo\\
\cdot&&\underline{\ztwo}\\
\ztwo&\oplus\ztwo\\
\ztwo&&&\underline{\ztwo}\\
\cdot&&\ztwo&&\ztwo\\
\ztwo&&&\ztwo\\
\ztwo&&&&&\ztwo\\
\ztwo&&&&\ztwo&&\ztwo\\
\cdot&&&&&\ztwo\\
};
\path[-stealth]
(m-1-1) edge[bend left] node {}(m-2-2)
(m-6-5) edge[dotted] node {}(m-4-2)
(m-8-6) edge[dotted] node {}(m-6-3)
(m-9-7) edge[dotted] node {}(m-7-4)
;
\draw  (-1,-3) -- (-1,3);
\draw[dotted]  (-3,1.8) -- (3,1.8);

\end{tikzpicture}
\end{center}
The first column in $\mathrm{Tor}^{\Rin}_{0,*}$ is generated by the based element
\begin{equation*}
q|q,\quad q|v,\quad v|v,
\end{equation*}
while the additional summand $\oplus\ztwo$ is generated by $q|v+v|q$. The elements of the higher $\mathrm{Tor}^{\Rin}_{i,*}$ groups can be described thanks to Lemma \ref{torodd}. In particular, the generators of the top summand of each of the first three columns are given by
\begin{equation*}
q|Q^2|q,\quad Qq|Q|Q^2|q,\quad q|Q^2|Q|Q^2|q.
\end{equation*}
The differential $d_2$ (described in Lemma \ref{d2}) vanishes thanks to our description of the Massey products $\langle\cdot,Q^2,Q\rangle$ and $\langle\cdot, Q,Q^2\rangle$ in Theorem \ref{Massey}. On the other hand, the differential $d_3$ is non-trivial. In the picture, the top dotted arrows represents
\begin{equation*}
d_3(q|Q^2|Q|Q^2|q)=\langle q,Q^2,Q,Q^2\rangle|q+q|\langle Q^2,Q,Q^2,q\rangle=v|q+q|v
\end{equation*}
where the Massey product $\langle q,Q^2,Q,Q^2\rangle=v$ is computed as in Remark \ref{kad}. Similarly, the other two dotted arrows represent
\begin{align*}
d_3(Qq|Q|Q^2|Q|Q^2|q)&=v|Q^2|q+Qq|V|q+Qq|Q|v\\
d_3(q|Q^2|Q|Q^2|Q|Q^2|q)&=v|Q|Q^2|q+q|V|Q^2|q+q|Q^2|V|q+q|Q^2|Q|v,\\
\end{align*}
and in general the whole $2$-periodic tail cancels out in this fashion as in Example \ref{FF}. In particular, of the $\mathrm{Tor}^{\Rin}_{i,*}$ for $i\geq1$ only the two underlined summands survive to the $E^{\infty}$ page. There is only one non trivial extension, namely
\begin{equation*}
Q\cdot(Qq|Q|Q^2|q)=v|q,
\end{equation*}
which can be again computed thanks to Theorem \ref{Massey}. The final result is therefore graphically depicted as
\begin{center}
\begin{tikzpicture}
\matrix (m) [matrix of math nodes,row sep=0.1em,column sep=0.7em,minimum width=0.1em]
  { \ztwo&\cdot &\ztwo&\ztwo&\ztwo&\cdots  \\
  &\ztwo\\
  &\ztwo\\};

\path[-stealth]

(m-1-1) edge[bend right] node {}(m-2-2)
(m-1-4) edge[bend right] node {}(m-1-5)
(m-1-3) edge[bend right] node {}(m-1-4)
(m-1-1) edge[bend left] node {}(m-1-5)

;
\end{tikzpicture}
\end{center}
so that $\alpha=2$ and $\beta=\gamma=0$.
\end{example}

\vspace{0.3cm}

\begin{example}\label{M3M3}
Consider the connected sum of two manifolds of simple type $M_3$. The computation of the $E^2$ page is showed below. The group $\mathrm{Tor}^{\Rin}_{0,*}$ is as usual the tensor product $M_3\otimes_{\Rin}M_3$; the first column consists of based elements and is generated over $\Rin$ by
\begin{equation*}
qv^{-1}|qv^{-1},\quad qv^{-1}|v,\quad v|v
\end{equation*}
lying in degrees respectively $-10$, $-3$ and $4$, while the summands $\oplus\ztwo$ are unbased and  are represented by $qv^{-1}|v+v|qv^{-1}$ and its image under $V$ respectively.
Again there is a $2$-periodic infinite tail as in the case of $\mathrm{Tor}^{\Rin}_{*,*}(\ztwo,\ztwo)$; its top generator in each of the first three columns is given respectively by
\begin{equation*}
q|Q^2V|qv^{-1},\quad Qq|Q|Q^2V|qv^{-1},\quad q|Q^2|Q|Q^2V|qv^{-1}.
\end{equation*}
The main difference in this case is the presence of the extra summand $F$, which is represented by $qv^{-1}|Q^2V|qv^{-1}$, and corresponds to
\begin{equation*}
(0,qv^{-1})\in\mathrm{ker}\begin{bmatrix}
Q & 0\\
V^{2} & Q^2V
\end{bmatrix}.
\end{equation*}
\begin{center}
\begin{tikzpicture}
\matrix (m) [matrix of math nodes,row sep=0.1em,column sep=0.9em,minimum width=0.1em]
  {\ztwo   \\
 \cdot &\ztwo\\
  \cdot&\ztwo\\
  \cdot\\
  \ztwo\\
  \cdot&{\ztwo}\\
  \cdot &&&\underline{F}\\
  \ztwo&\oplus\ztwo&\\
  \ztwo\\
  \cdot&{\ztwo}\\
  \cdot&&&\underline{\ztwo}\\
  \ztwo&\oplus\ztwo&\\
  \ztwo&&&&\underline{\ztwo}\\
  \cdot&&&\ztwo&&\ztwo\\
  \ztwo&&&&\ztwo\\
  \ztwo&&&&&&\ztwo\\
  \ztwo&&&&&\ztwo&&\ztwo\\
  \cdot&&&&&&\ztwo\\
  };

\path[-stealth]
(m-1-1) edge[bend left] node {}(m-2-2)
(m-2-2) edge[bend left] node {}(m-3-2)
(m-5-1) edge[bend left] node {}(m-6-2)
(m-8-1) edge[bend left] node {}(m-9-1)
(m-9-1) edge[bend left] node {}(m-10-2)
(m-12-1) edge[bend left] node {}(m-13-1)
(m-15-1) edge[bend left] node {}(m-16-1)
(m-16-1) edge[bend left] node {}(m-17-1)
(m-16-7) edge[bend right, dotted] node {}(m-14-4)
(m-17-8) edge[bend right, dotted] node {}(m-15-5)
(m-14-6) edge[bend right, dotted] node {}(m-12-2)
(m-7-4) edge[bend left] node {}(m-11-4)
(m-8-2) edge[bend left] node {}(m-12-2)
;
\draw  (-1,-6) -- (-1,6);
\draw  (-0.25,-6) -- (-0.25,6);
\draw  (0.75,-6) -- (0.75,0);
\draw  (1.5,-6) -- (1.5,0);
\draw  (2.5,-6) -- (2.5,0);
\draw[dotted](-3.5,-1.1) -- (3,-1.1);

\end{tikzpicture}
\end{center}

From this description, we readily recover the $E^{\infty}$-page as in Example \ref{M1M1}. In particular $E^{\infty}_{i,*}$ vanishes for $i\geq 3$, and of $\mathrm{Tor}^{\Rin}_{i,*}$, $i\geq1$, only the underlined summands survive. The only non-trivial extension is given as in Example \ref{M1M1} by
\begin{equation*}
Q\cdot(Qq|Q|Q^2V|qv^{-1})=v^2|qv^{-1}=v|q\\
\end{equation*}
The Floer homology of $M_3\hash M_3$ then looks like the following:
\begin{center}
\begin{tikzpicture}
\matrix (m) [matrix of math nodes,row sep=0.1em,column sep=1em,minimum width=0.1em]
  {     \ztwo &\cdot&\cdot&  \cdot  & \ztwo & \cdot &\cdot & \ztwo &\ztwo&\cdot&\ztwo_0&\ztwo&\ztwo&\cdots \\
          & \ztwo &\ztwo   &&  & \ztwo &  &  & &\ztwo\\
          &&&&\ztwo&&\ztwo&&\ztwo\\
};
\path[-stealth]
(m-1-1) edge[bend left] node {}(m-1-5)
(m-1-5) edge[bend left] node {}(m-1-9)
(m-1-8) edge[bend left] node {}(m-1-12)
(m-1-9) edge[bend left] node {}(m-1-13)
(m-1-1) edge[bend right] node {}(m-2-2)
(m-2-2) edge[bend right] node {}(m-2-3)
(m-1-5) edge[bend right] node {}(m-2-6)
(m-1-8) edge[bend right] node {}(m-1-9)
(m-1-9) edge[bend right] node {}(m-2-10)
(m-1-11) edge[bend right] node {}(m-1-12)
(m-1-12) edge[bend right] node {}(m-1-13)
(m-3-5) edge[bend right] node {}(m-3-9)
;
\end{tikzpicture}
\end{center}
In particular, $\alpha=6$, $\beta=2$ and $\gamma=0$.\end{example}
\begin{remark}
Recall that for the usual monopole Floer homology we have that
\begin{equation*}
\HMf_{\bullet}(Y_0\hash Y_1,\spin_0\hash \spin_1)=\mathrm{Tor}^{\ztwo[U]}_{*,*}(\HMf_{\bullet}(Y_0,\spin_0),\HMf_{\bullet}(Y_1,\spin_1))\langle1\rangle,
\end{equation*}
see \cite{Lin4}. In particular, The usual monopole Floer homology of $M_3\hash M_3$ is given by
\begin{equation*}
\big(\ztwo[U]\oplus\ztwo[U]/U^3\langle5\rangle\oplus\ztwo[U]/U^3\langle10\rangle\big)\oplus (\ztwo[U]/U^3\langle5\rangle)^{\oplus2}.
\end{equation*}
The first summand is related via the Gysin exact triangle to the first two rows of our final result, while the second summand to the third row. This last computation implies the existence of some Massey products relating the three $\ztwo$ summands in the third row of $\HSf_{\bullet}$.
\end{remark}
\vspace{0.3cm}

\vspace{0.3cm}
\section{Connected sums with more summands.}\label{moreex}
In this final section we discuss more examples of connected sums (many of which already appeared in different setups, see \cite{Sto2}, \cite{DaiM}, \cite{DaiS}), involving manifolds of simple type (possibly with both orientations).
\\
\par
\textit{Sums of manifolds of simple type. }The following is the analogous of the main result of \cite{Sto2}.
\begin{prop}
Consider sequences of integers $0<n_1<n_2<\dots$, and suppose that, for each $i$, $Y_i$ has simple type $M_{n_i}$. Then the $Y_i$ are linearly independent in $\Th$.
\end{prop}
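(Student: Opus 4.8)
The plan is to deduce linear independence in $\Th$ from the behaviour of the Manolescu correction terms under connected sum, leveraging the explicit computations for simple type manifolds developed in Sections \ref{connsimple} and \ref{moreex}. Since $\alpha,\beta,\gamma$ are homology cobordism invariants that vanish for $S^3$, it suffices to show that no nontrivial $\mathbb{Z}$-linear combination $\sum_i a_i[Y_i]$ equals $0$ in $\Th$. Suppose for contradiction that $\sum_{i=1}^N a_i[Y_i]=0$ with $a_N\neq 0$, where $N$ is the largest index with nonzero coefficient; reversing the global orientation sends $a_i\mapsto -a_i$, so we may assume $a_N\geq 1$. Writing $X=\hash_{i\leq N}a_iY_i$ (with $|a_i|$ copies of $\pm Y_i$ according to the sign of $a_i$), we have that $X$ is homology cobordant to $S^3$, so $\alpha(X)=\beta(X)=\gamma(X)=0$. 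I will derive a contradiction by showing that one of these correction terms is in fact nonzero, and then conclude by strong induction on $N$.

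The first step is to reduce to the modules $M_{n_i}$. Because each $Y_i$ has simple type, $\HSf_{\bullet}(Y_i)=M_{n_i}\langle-1\rangle\oplus J_i$ with $p_*(J_i)=0$ and no nontrivial Massey products between the two summands; under the Eilenberg-Moore spectral sequence the $J_i$ contribute only unbased summands that do not interact with the towers, and so are irrelevant to the correction terms. Thus the correction terms of $X$ agree with those of $X'\hash Z$, where $Z$ is a manifold of simple type $M_{n_N}$ and $X'=\hash_{i<N}a_iY_i$ absorbs the remaining summands. At this point Corollary \ref{onlyR} applies: the correction terms of $X'\hash M_{n_N}$ are determined by the $\Rin$-module structure of $\HSf_{\bullet}(-X')$ and, as in its proof, are governed by the classes $\x\in\HSt_{\bullet}(X')=\HSf_{\bullet}(-X')$ satisfying $V^{n_N}\x=0$ with $Q\x$ lying in the relevant tower, via the Massey products $\langle Q,\x,V^{n_N}\rangle$ described in Theorem \ref{Massey}.

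The decisive point, and the main obstacle, is to show that the strict inequality $n_i<n_N$ prevents the contribution of the top summand $M_{n_N}$ from being cancelled. The module $\HSf_{\bullet}(-X')$ is, up to the non-interacting junk, the output of the Eilenberg-Moore spectral sequence applied to the modules $\pm M_{n_i}$ with $n_i<n_N$; I expect that all of its $V$-torsion relevant to the towers, together with the generalized Massey products entering $\langle Q,\x,V^{n_N}\rangle$, is controlled by $\max_{i<N}n_i\leq n_{N-1}<n_N$. Granting this, the tower element produced by $M_{n_N}$ survives, forcing $\beta(X)$ (equivalently $\alpha(X)$) to retain a contribution of size $n_N$ that the lower summands cannot absorb, contradicting $\beta(X)=0$ and yielding $a_N=0$; the inductive hypothesis then kills the remaining coefficients. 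The hard part is precisely this bookkeeping: under $\Tor^{\Rin}_{*,*}$ the $V$-torsion orders of the summands can a priori accumulate, so one must use the explicit two-periodic minimal resolutions and matrix factorizations of Section \ref{projres}, together with the vanishing of the relevant Massey products whenever the power $V^{n_N}$ exceeds the torsion order of every smaller summand, to guarantee that the dominant tower is genuinely detected by the correction terms.
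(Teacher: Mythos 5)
Your overall strategy (detecting a hypothetical relation through correction terms) is the right one, but the argument has a genuine gap at exactly the point you flag as ``the hard part'', and the mechanism you propose there does not work. You keep all the summands, with their signs, in a single connected sum $X$ and assert that $\beta(X)$ must ``retain a contribution of size $n_N$'' coming from the top summand $M_{n_N}$. For mixed-orientation sums this is false: in Section \ref{moreex} the paper computes $M_4\hash(-M_3)\hash(-M_3)$, whose top summand is $M_4$ with positive sign, yet $\alpha=2$, $\beta=\gamma=-2$, $\delta=0$; the value of $\beta$ there is produced by the $-M_3$ summands, not by $M_4$. So once both orientations are present the top summand need not dominate $\beta$, and no estimate of the form $\beta(X)=n_N$ or $\beta(X)\geq n_N$ is available; your claim that the lower summands' $V$-torsion ``cannot absorb'' the top tower is precisely what fails. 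There is also a smaller slip: when $a_N>1$ you peel off one copy of $M_{n_N}$ and push the remaining $a_N-1$ copies into $X'$, after which the assertion that the relevant torsion of $\HSf_{\bullet}(-X')$ is controlled by $n_{N-1}<n_N$ is simply not true.

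The paper avoids all of this by never forming a mixed sum: a relation $\sum a_i[Y_i]=0$ is rewritten as $m_1[Y_{i_1}]+\dots+m_e[Y_{i_e}]=n_1[Y_{j_1}]+\dots+n_f[Y_{j_f}]$ with all coefficients positive and with disjoint index sets, and the entire content is the computation that for a \emph{positive} connected sum of manifolds of simple type the invariant $\beta$ equals the largest type occurring. That computation is carried out inductively: for $M_{2k}\hash M_{2k'}$ with $k'\leq k$ one finds $\beta=2k$, $\gamma=0$, and, crucially, that the resulting module carries no nontrivial Massey products of the form $\langle V^k,\x,Q\rangle$, so each further summand only sees $\Tor^{\Rin}_{0,*}$ when correction terms are computed. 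Since $\beta$ is a homology cobordism invariant, equality of the two sides forces the maximal indices to coincide, which is impossible for disjoint index sets unless both sides are empty. To repair your argument you would either have to reorganize the relation in this way, or genuinely prove a statement about correction terms of mixed-orientation sums strong enough to exclude all cancellations --- something the paper does not attempt and which its own examples show to be delicate.
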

This holds for example for $Y_i=-\Sigma(2,4n-1,8n-1)$ (see \cite{FurCob} for the original proof, using instantons, that $\mathbb{Z}^{\infty}\subset\Th$).
\begin{proof}
The proof is essentially that of \cite{Sto2}. Let us focus for simplicity in the case all summands have even type. The crucial observation to show linear independence is the following: if we have a relation
\begin{equation*}
m_1[Y_{i_1}]+\dots+m_e[Y_{i_e}]=n_1[Y_{j_1}]+\dots+n_f[Y_{j_f}]
\end{equation*}
for positive $m_h$ and $n_h$, and $i_1<\cdots < i_e$,  $j_1<\cdots < j_f$, then $i_e=j_f$. This readily implies a unique factorization property. To show that is true, we claim that
\begin{equation*}
\beta(m_1[Y_{i_1}]+\dots+m_e[Y_{i_e}])=-2i_e.
\end{equation*}
so that the highest index among the summands is determined by the $\beta$ invariant of the sum. This can be computed inductively in the number of summands, using the description of connected sums with manifolds of simple type discussed in Section \ref{connsimple}. Let us for example consider the Floer homology of a connected sum $M_{2k}\hash M_{2k'}$ for $k'\leq k$. Recall that we have the identification
\begin{equation*}
\mathrm{Tor}^{\Rin}_{0,*}(M^*_{2k},M^*_{2k'})=M^*_{2k}\otimes_{\Rin}M^*_{2k'}.
\end{equation*}
The latter can be written as a direct sum of two $\Rin$-modules, one of which is
\begin{align*}
&\ztwo[V]\oplus\ztwo[V]\langle4k-1\rangle\oplus\ztwo[[V]]\langle4(k+k')-2\rangle\\
&\oplus\ztwo[V]/V^{k+k'}\langle4(k+k')-3\rangle\oplus\ztwo[V]/V^{k'}\langle4(k+k')-4\rangle\numberthis\label{mainsumm}
\end{align*}
where the action of $Q$ is not trivial from one summand to the one next to it on the right when there are two $\ztwo$ summands that differ in degree by one, and the other is $\ztwo[V]/V^{k'}\langle4k'-1\rangle$. The group $\mathrm{Tor}^{\Rin}_{1,*}$ is isomorphic to $\ztwo[V]/V^{k'}\langle4k'-4\rangle$. In the case where $k=2$ and $k'=1$, $\mathrm{Tor}^{\Rin}_{*,*}$ can be graphically described as follows:
\begin{center}
\begin{tikzpicture}
\matrix (m) [matrix of math nodes,row sep=0.5em,column sep=1em,minimum width=0.1em]
  {    \ztwo &\cdot&\cdot&\ztwo&\ztwo&\cdot&\cdot&\ztwo&\ztwo&\cdot&\ztwo_0&\ztwo&\ztwo&\cdots\\
  &\ztwo&\ztwo&&&\ztwo&&&&\ztwo\\
  &&&&&&&\ztwo\\
  &&&&&&&&&&\ztwo\\
};
\path[-stealth]

(m-1-1) edge[bend right] node {}(m-2-2)
(m-1-5) edge[bend right] node {}(m-2-6)
(m-1-9) edge[bend right] node {}(m-2-10)
;
\draw  (-6,-0.7) -- (6,-0.7);

\end{tikzpicture}
\end{center}
Here the first three rows represent $\mathrm{Tor}^{\Rin}_{0,*}$ (the first two rows being the summand in equation (\ref{mainsumm})) while the forth row represents $\mathrm{Tor}^{\Rin}_{1,*}$. For clarity, we have only depicted the $\Rin$-actions between elements of different rows. There are no trivial $\Rin$-extensions, so in particular we have
\begin{equation*}
\alpha(Y_k\hash Y_{k'})=2k+2k',\quad\beta(Y_k\hash Y_{k'})=2k,\quad\gamma(Y_k\hash Y_{k'})=0,
\end{equation*}
Notice that there are no non-trivial Massey products of the form $\langle V^k,\x,Q\rangle$ in the Floer homology of the connected sum. This implies that when taking a connected sum with $Y_{2k''}$, again only $\mathrm{Tor}^{\Rin}_{0,*}$ has to be taken account when computing correction terms. From here, a simple computation of the effect of tensoring with $M_{2k''}$ implies the claim.
\end{proof}

\vspace{0.3cm}
\textit{Connected sums with both orientations.} Consider the connected sum $M=M_1\hash M_1$, whose relevant part of the homology was computed in Example \ref{M1M1} to be
\begin{center}
\begin{tikzpicture}
\matrix (m) [matrix of math nodes,row sep=0.1em,column sep=0.7em,minimum width=0.1em]
  { \ztwo&\cdot &\ztwo&\ztwo&\ztwo&\cdots  \\
  &\ztwo\\};

\path[-stealth]

(m-1-1) edge[bend right] node {}(m-2-2)
(m-1-4) edge[bend right] node {}(m-1-5)
(m-1-3) edge[bend right] node {}(m-1-4)
(m-1-1) edge[bend left] node {}(m-1-5)

;
\end{tikzpicture}
\end{center}
Let us consider the dual $-(M_1\hash M_1)$, which has Floer homology depicted as
\begin{center}
\begin{tikzpicture}
\matrix (m) [matrix of math nodes,row sep=0.1em,column sep=0.7em,minimum width=0.1em]
  { \cdot&  \ztwo_{-1} &\ztwo  &\cdot&\ztwo& \ztwo &\ztwo&\cdots\\
  \ztwo_0\\};

\path[-stealth]

(m-1-2) edge[bend left] node {}(m-1-6)
(m-1-3) edge[bend left] node {}(m-1-7)
(m-1-2) edge[bend right] node {}(m-1-3)
(m-1-5) edge[bend right] node {}(m-1-6)
(m-1-6) edge[bend right] node {}(m-1-7)
(m-2-1) edge[bend right, dotted] node{}(m-1-3)
;
\end{tikzpicture}
\end{center}
Here the dotted line represents the triple Massey product $\langle \cdot,Q,Q^2\rangle$, which is determined by inspecting the Gysin exact sequence as in Proposition \ref{froy}. Consider now $M'=-(M_1\hash M_1)\hash M_2$. Using the description for connected sums with $M_2$ of Section \ref{connsimple}, we see that the relevant part of its Floer homology is given by
\begin{center}
\begin{tikzpicture}
\matrix (m) [matrix of math nodes,row sep=0.1em,column sep=0.7em,minimum width=0.1em]
  {&   \ztwo_1  &\cdot&\ztwo& \ztwo &\ztwo&\cdots\\
  \ztwo&&\ztwo_0\\};

\path[-stealth]

(m-1-2) edge[bend left] node {}(m-1-6)
(m-1-2) edge[bend right] node {}(m-2-3)
(m-1-5) edge[bend right] node {}(m-1-6)
(m-1-4) edge[bend right] node {}(m-1-5)
(m-2-1) edge[bend right,dotted] node {}(m-2-3)

;
\end{tikzpicture}
\end{center}
where the element in degree $2$ is the tensor product of the element of degree zero in the homology of $-(M_1\hash M_1)$ and $qv^{-1}$. In particular, it comes with a non trivial Massey product onto the generator in degree $0$, as depicted by the dotted arrow. In particular, $M$ and $M'$ can be distinguished up to homology cobordism just by looking at Massey products. Of course, in this case we already know that they cannot be homology cobordant because $M_1$ and $M_2$ are independent.
\\
\par
\textit{Imposing correction terms. }Let us discuss the following result (which should be compared with the analogous one in involutive Heegaard Floer homology from \cite{DaiS}).
\begin{prop}
Consider integers $a,b,c,d$ such that:
\begin{itemize}
\item $c\leq b,d\leq a$;
\item $a,b,c$ have the same parity.
\end{itemize}
Then there exists a homology sphere $Y$ with $\alpha(Y)=a$, $\beta(Y)=b$, $\gamma(Y)=c$ and $\delta(Y)=d$.
\end{prop}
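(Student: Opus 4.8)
The plan is to \emph{decouple} the Fr\o yshov invariant $\delta$ --- which is invisible to the $\Rin$-module structure (this is the content of \cite{Sto3} and the reason Corollary~\ref{onlyR} holds) --- from the three Manolescu correction terms by means of a grading shift. I would build $Y$ as a connected sum $Y = Y_0 \hash P_d$, where $Y_0$ is a connected sum of manifolds of simple type $\pm M_n$ carrying the ``reduced'' information and $P_d$ is an auxiliary summand used solely to prescribe $\delta$. Two inputs are essential. First, the Fr\o yshov invariant is additive under connected sum. Second, if $P$ is an integral homology sphere whose ordinary Floer homology $\HMf_{\bullet}(P)$ is a single free tower $\ztwo[[U]]$, then the Gysin exact sequence forces $\HSf_{\bullet}(P)$ to be the free module $\Rin$ (up to grading shift); hence, by Theorem~\ref{mainconn} and the collapse of the Eilenberg--Moore spectral sequence (a free, hence projective, module has no higher $\Tor$), connect-summing with $P$ merely shifts the quadruple $(\alpha,\beta,\gamma,\delta)$ by $(\delta(P),\delta(P),\delta(P),\delta(P))$.

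Concretely, I would take $P_d = \hash^{d}\,\Sigma(2,3,5)$ for $d \ge 0$ and $\hash^{|d|}(-\Sigma(2,3,5))$ for $d < 0$; since the Poincar\'e sphere is an $L$-space its $\HSf_{\bullet}$ is free over $\Rin$, hence so is that of any connected sum of copies of $\pm\Sigma(2,3,5)$, and one computes $\delta(P_d) = d$ together with $\alpha(P_d)=\beta(P_d)=\gamma(P_d)=d$. The reduction then runs as follows: set $(a',b',c') = (a-d,\, b-d,\, c-d)$. The hypotheses $c \le b \le a$ and $a \equiv b \equiv c \pmod 2$ pass unchanged to $c' \le b' \le a'$ and $a' \equiv b' \equiv c'$, while the hypothesis $c \le d \le a$ becomes precisely $c' \le 0 \le a'$. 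Granting a homology sphere $Y_0$ with $(\alpha,\beta,\gamma,\delta)(Y_0) = (a', b', c', 0)$, additivity of $\delta$ and the shift described above give $(\alpha,\beta,\gamma,\delta)(Y_0 \hash P_d) = (a,b,c,d)$.

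It remains to realize, by connected sums of manifolds of simple type with vanishing Fr\o yshov invariant, an arbitrary triple $(a',b',c')$ with $c' \le 0 \le a'$ and common parity. Here I would use $M_n$ (contributing $(\alpha,\beta,\gamma,\delta) = (n,n,\epsilon_n,0)$, with $\epsilon_n$ the parity of $n$) to raise $\alpha$ and $\beta$, and the orientation-reversed $-M_m$ (contributing $(-\epsilon_m,-m,-m,0)$ via the identities $\alpha(-Y)=-\gamma(Y)$, $\beta(-Y)=-\beta(Y)$, $\gamma(-Y)=-\alpha(Y)$) to lower $\beta$ and $\gamma$; all such summands have $\delta = 0$, so additivity keeps the Fr\o yshov invariant of every partial sum equal to $0$. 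The correction terms of these connected sums are governed entirely by the $\Rin$-module structure and the explicit triple and fourfold Massey products of Theorem~\ref{Massey}, so that the computations of Sections~\ref{connsimple} and~\ref{moreex} (for instance $M_{2k}\hash M_{2k'}$, with $\alpha = 2k+2k'$, $\beta = 2k$, $\gamma = 0$) make the bookkeeping algorithmic; this is the step where I would lean most heavily on the machinery of the paper and on the analogy with \cite{DaiS}.

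The main obstacle is exactly this last step. While $M_n$ and $-M_m$ plainly give independent control of $\alpha$ and $\gamma$, pinning the \emph{middle} invariant $\beta$ to a prescribed value $b'$ with $c' \le b' \le a'$ --- and doing so simultaneously in the more delicate odd-parity case --- requires a careful analysis of how $\beta$ evolves under iterated connected sums, which the sample computations only illustrate in special configurations. I expect that organizing the summands as in the proof of the linear independence proposition of Section~\ref{moreex}, and reading off $\beta$ from the position of the $Q\cdot\V$-based tower after each connected sum, will suffice, but verifying that every admissible $b'$ is attained is the technical heart. Finally, the parity constraint $a \equiv b \equiv c \pmod 2$ is forced because $\alpha,\beta,\gamma$ are all lifts of $-\mu(Y)$ and hence mutually congruent modulo $2$, whereas $\delta$ obeys no such constraint: one has $\delta \ge \gamma - 1$ with the defect governed by the Massey product of Proposition~\ref{froy}. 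This is what frees $d$ to range over the full interval $[c,a]$ in either parity, and the inequality $c' \le 0 \le a'$ produced by the reduction is precisely the range in which a $\delta = 0$ realization can exist.
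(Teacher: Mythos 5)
Your proposal follows essentially the same route as the paper: first realize an arbitrary triple $(a-d,\,b-d,\,c-d)$ with $\delta=0$ by connected sums of manifolds of simple type $\pm M_n$ (leaning, as the paper does, on the worked examples and the analogy with \cite{DaiS} for the bookkeeping of $\beta$), and then shift all four invariants simultaneously by summing with copies of $\pm\Sigma(2,3,5)$, whose $\HSf_{\bullet}$ is a free $\Rin$-module so that the Eilenberg--Moore spectral sequence degenerates to a pure grading shift. The only discrepancy is a harmless orientation convention for which of $\pm\Sigma(2,3,5)$ raises $\delta$; the reduction $c\le d\le a\Leftrightarrow c'\le 0\le a'$ and the parity analysis match the paper's argument.
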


Let us begin by discussing a slightly more involved example, namely $M_4\hash -M_3\hash-M_3$. To compute this we will regroup it as $M_4\hash -(M_3\hash M_3)$. The relevant part of $M_3\#M_3$ was described in Example \ref{M3M3}, so that by Poincar\'e duality the relevant part of $-(M_3\hash M_3)$ is
\begin{center}
\begin{tikzpicture}
\matrix (m) [matrix of math nodes,row sep=0.7em,column sep=1em,minimum width=0.1em]
  { &&\ztwo&\cdot&\cdot&\ztwo&\ztwo &\cdot &\cdot &\ztwo&\ztwo&\cdot &\ztwo&\ztwo&\ztwo&\cdots\\
  \ztwo_0&\cdot&\cdot&\cdot&\ztwo&\cdot&\cdot&\ztwo&\ztwo\\
};

\path[-stealth]

(m-1-3) edge[bend left] node {}(m-1-7)
(m-1-6) edge[bend left] node {}(m-1-10)
(m-1-7) edge[bend left] node {}(m-1-11)
(m-1-10) edge[bend left] node {}(m-1-14)
(m-1-11) edge[bend left] node {}(m-1-15)
(m-2-1) edge[bend left] node {}(m-2-5)
(m-2-5) edge[bend left] node {}(m-2-9)
(m-1-6) edge[bend right] node {}(m-1-7)
(m-1-10) edge[bend right] node {}(m-1-11)
(m-1-14) edge[bend right] node {}(m-1-15)
(m-2-8) edge[bend right] node {}(m-2-9)
(m-2-9) edge[dotted, bend right] node {}(m-1-13)
;
\end{tikzpicture}
\end{center}
Here the elements in the top row are based, while the elements in the bottom row are not. Denote by $\x$ the element in degree zero in the bottow row. We have highlighted with a dotted line the Massey product $\langle Q,V^2\x,V\rangle=v^3$, which will be needed later. When connecting sum with $M_4$, $\mathrm{Tor}_{1,*}^{\Rin}$ corresponds to the elements in $\mathrm{ker}V^2\cap \mathrm{ker}Q$, i.e. $V\x$ and $V^2\x$. In particular, they give rise to the two elements
\begin{equation}\label{tor1class}
V\x|V^2|qv^{-2}+V\x|Q|1,\qquad V^2\x|V^2|qv^{-2}+V^2\x|Q|1.
\end{equation}
The action of $V$ sends the first element to the second, and by Proposition \ref{Tor1} the second element is mapped via the action of $Q$ to $v^3|qv^{-2}\in \mathrm{Tor}_{0,*}^{\Rin}$. In particular, the computation of the module structure of the connected sum is readily obtained from that of $\mathrm{Tor}_{0,*}^{\Rin}$, i.e. the tensor product. The relevant part of the final result is
\begin{center}
\begin{tikzpicture}
\matrix (m) [matrix of math nodes,row sep=0.7em,column sep=1em,minimum width=0.1em]
  {&&&&&\ztwo&\cdot&\ztwo_0 &\cdot&\ztwo&\cdot &\underline{\ztwo}&\ztwo&\ztwo&\cdot&\underline{\ztwo}&\ztwo&\ztwo&\\
 \ztwo &\cdot&\ztwo&\cdot&\ztwo&\cdot&\ztwo\\
};

\path[-stealth]

(m-2-1) edge[bend left] node {}(m-2-5)
(m-2-3) edge[bend left] node {}(m-2-7)
(m-1-6) edge[bend left] node {}(m-1-10)
(m-1-10) edge[bend left] node {}(m-1-14)
(m-1-14) edge[bend left] node {}(m-1-18)
(m-1-6) edge[bend right] node {}(m-2-7)
(m-1-12) edge[bend right] node {}(m-1-13)
(m-1-13) edge[bend right] node {}(m-1-14)
(m-1-16) edge[bend right] node {}(m-1-17)
(m-1-17) edge[bend right] node {}(m-1-18)
;

\end{tikzpicture}
\end{center}
where the underlined $\ztwo$ summands correspond to the classes (\ref{tor1class}) from $\mathrm{Tor}_{1,*}^{\Rin}$. We have in this case
\begin{equation*}
\alpha=2,\quad \delta=0,\quad \beta=\gamma=-2.
\end{equation*} 
Again, the relevant Massey products can be inferred from both the tensor product formula or the Gysin exact triangle with
\begin{equation*}
\ztwo[U]\oplus(\ztwo[U]/U^4)_7\oplus (\ztwo[U]/U^3)_7\oplus (\ztwo[U]/U^3)_2
\end{equation*}
see also Example \ref{mccoy}.
In general, considering as in \cite{DaiS} connected sums of three manifolds with simple type with both orientations, one can construct homology spheres $Y$ with any given $\gamma,\leq \beta\leq\alpha$, $\alpha$, $\beta$ and $\gamma$ with the same parity and $\delta=0$. To conclude, one needs to take further connected sums with the Poincar\'e homology sphere $\Sigma(2,3,5)$: as $\HSf_{\bullet}(\Sigma(2,3,5))\cong\Rin\langle-3\rangle$ (see \cite{Lin2}), we have
\begin{equation*}
\HSf_{\bullet}(\Sigma(2,3,5)\hash Y)=\HSf_{\bullet}(Y)\langle-2\rangle,
\end{equation*}
so that all four correction terms are shifted down by $-1$.
\vspace{1cm}
\bibliographystyle{alpha}
\bibliography{biblio}

\end{document}